\title{Homogeneous division polynomials for Weierstrass elliptic curves}
\author{Jinbi Jin \\ Universiteit Leiden \\ Mathematisch Instituut \\ Niels Bohrweg 1 \\ 2333 CA Leiden \\ jjin@math.leidenuniv.nl\do}
\date{\today}
\begin{document}
  \maketitle


\subsubsection*{Abstract}
Starting from the classical division polynomials we construct
homogeneous polynomials $\alpha_n$, $\beta_n$, $\gamma_n$ such that for $P = (x:y:z)$ on an elliptic curve in Weierstrass form over an arbitrary ring we have $nP = \pth*[big]{\alpha_n(P):\beta_n(P):\gamma_n(P)}$.
To show that $\alpha_n,\beta_n,\gamma_n$ indeed have this property we use the a priori existence of such polynomials, which we deduce from the Theorem of the Cube.

We then use this result to show that the equations defining the modular curve $Y_1(n)_{\bb C}$ computed for example by Baaziz, in fact are equations of $Y_1(n)$ over $\bb Z[1/n]$.

\subsubsection*{Acknowledgements}
This work is partially funded by NWO, the Netherlands Organisation for Scientific Research.
This work was originally a part of my MSc thesis, and I would like to thank Bas Edixhoven for his supervision at the time, especially for mentioning the Theorem of the Cube.
I would also like to thank Bas Edixhoven, Andreas Enge, Lenny Taelman, and a referee for their comments on previous versions of this paper.

\section*{Introduction}

The main problem treated in this paper is the following.

\begin{center}
  {\em Given an integer $n$, construct a triple of homogeneous polynomials that defines multiplication by $n$ on all {\em Weierstrass curves} (i.e.~projective plane curves given by a Weierstrass polynomial) over rings.}
\end{center}

The classical {\em division polynomials} $\Psi_n \in \bb Z[a_1,a_2,a_3,a_4,a_6,X,Y]$ (and the related polynomials $\Omega_n, \Phi_n \in \bb Z[a_1,a_2,a_3,a_4,a_6,X,Y]$) describe multiples of {\em affine points}---these are points of the form $(a:b:1)$---for all smooth Weierstrass curves over fields, in the following way.
\begin{prop}[{\cite[Prop.\ 3.55]{enge}}]
  Let $n$ be a positive integer.
  Then for all fields $k$, all smooth Weierstrass curves over $k$ defined by
  \begin{equation}
    Y^2 + a_1 XY + a_3 Y = X^3 + a_2 X^2 + a_4 X + \alpha_6,
    \label{intro_eqn_weierstrass}
  \end{equation}
  and all $P = (x:y:1)$ on $E$, we have $nP = \pth*[big]{\Phi_n \Psi_n : \Omega_n : \Psi_n^3}(a_1,a_2,a_3,a_4,a_6,x,y,1)$.
\end{prop}
We recall the definition of the division polynomials in \autoref{const_sect_generic}, which is based on \cite[Ch.~3]{enge}.
The result above is not completely satisfactory, as the division polynomials are not defined on non-affine points.

In this paper we construct for all integers $n$ three homogeneous polynomials $\alpha_n, \beta_n, \gamma_n$ in $\bb Z[a_1,a_2,a_3,a_4,a_6][x,y,z]$ (unique up to a common sign) of degree $n^2$, as homogenisations of the unique representatives of the polynomials $\Phi_n \Psi_n, \Omega_n, \Psi_n^3$ modulo the polynomial (\ref{intro_eqn_weierstrass}) having $X$-degree at most $2$.
We show that for all Weierstrass curves $C$ over a ring $R$, and all ``nowhere singular'' points on $C$ of the form $P = (x:y:z)$, we have $nP = \pth*[big]{\alpha_n(P):\beta_n(P):\gamma_n(P)}$ (see the example on the next page for $\alpha_2$, $\beta_2$, $\gamma_2$).
The proof that $\alpha_n$, $\beta_n$, $\gamma_n$ has this property uses the Theorem of the Cube (see e.g.~\cite[Thm.~IV.3.3]{raynaud}).

For an application of this result, consider the modular curve $Y_1(n)$ for $n \geq 4$, i.e.~the moduli space of elliptic curves $E$ together with an {\em embedding} of $\bb Z/n \bb Z$ into $E$.
Explicit equations for its function field have been given before, for example in \cite{baaziz} over $\bb C$, or in \cite{sutherland} over an arbitrary field.
We describe one set of such equations below (see also e.g.~\cite[Sect.~3]{baaziz}).

Define $\psi_n \in \bb Z[1/n,s,t]$ by $\psi_n = \Psi_n(1+s,t,t,0,0,0,0)$.
Then $\psi_d \mid \psi_n$ if $d \mid n$ (see \autoref{moduli_divide}), so we may define $f_n$ recursively by
\[
  f_n = \left\{
    \begin{array}{ll}
      1 & \text{if }n = 1\\
      \psi_n \prod_{d\mid n, 0<d<n} f_d^{-1} & \text{if }n \geq 2
    \end{array}
  \right.,
\]
We show that the curve $Y_1(n)$ (rather than its function field) is defined over $\bb Z[1/n]$ by $f_n = 0$ and $\delta \neq 0$, where $\delta \in \bb Z[s,t]$ is the discriminant of the Weierstrass curve 
\[
  y^2z + (1+s) xyz + tyz^2 = x^3 + tx^2z.
\]

\subsubsection*{Overview}
In \autoref{const_sect_defn}, we define Weierstrass curves over arbitrary schemes, and give some properties of them that we will use throughout this paper.
In \autoref{const_sect_generic}, we describe the construction of $\alpha_n$, $\beta_n$, $\gamma_n$.
In \autoref{const_sect_main}, we state the main theorem.
\autoref{const_sect_generic} and \ref{const_sect_main}, as well as the summary of \autoref{const_sect_defn}, should be accessible for anyone with some basic knowledge of commutative algebra and of elliptic curves over fields.

In \autoref{univ_sect_smooth}, we consider multiplication by $n$ on smooth Weierstrass curves, by reducing to the {\em universal smooth Weierstrass curve} and use the Theorem of the Cube to show the existence of three homogeneous polynomials of degree $n^2$ defining multiplication by $n$.
We compare these polynomials with $\alpha_n$, $\beta_n$, $\gamma_n$, and show that they are the same.
In \autoref{univ_sect_sing}, we show that $\alpha_n$, $\beta_n$, $\gamma_n$ in fact define multiplication by $n$ on all Weierstrass curves, using the fact that they do so on the universal smooth Weierstrass curve.

Finally, in \autoref{moduli_sect_moduli}, we apply the main result for smooth Weierstrass curves, to obtain the aforementioned explicit description of the moduli space of elliptic curves $E$ together with an embedding of $\bb Z/n\bb Z$ into $E$.

\begin{ex}[Doubling formula] \label{intro_doubling}
  Let $R$ be a ring, and let $C$ be the Weierstrass curve over $R$ defined by the polynomial
  \[
    W = y^2z + a_1xyz + a_3yz^2 - x^3 - a_2x^2z - a_4xz^2 - a_6z^3,
  \]
  where $a_1, a_2, a_3, a_4, a_6 \in R$.
  Let $P = (x:y:z)$ be a point on $C$ given in homogeneous coordinates, such that $(\partial W / \partial x)(P)$, $(\partial W / \partial y)(P)$, $(\partial W / \partial z)(P)$ generate the unit ideal in $R$.
  Then
  \[
    2P = \pth*[big]{\alpha_2(P):\beta_2(P):\gamma_2(P)},
  \]
  where
  {\footnotesize
    \begin{align*}
      \alpha_2 &= 2xy^3 + 3a_1x^2y^2 + (a_1^2 - 2a_2)y^3z + (a_1^3 - 3a_1a_2 + 3a_3)xy^2z + (-2a_1^2a_2 + 2a_2^2 - 6a_4)x^2yz \\
      &\qquad \mbox{} + (a_1a_2^2 - 3a_2a_3 - 3a_1a_4)y^2z^2 + (a_1^2a_2^2 - a_1^3a_3 - 2a_1a_2a_3 - 4a_1^2a_4 - 3a_3^2 + 2a_2a_4 - 18a_6)xyz^2 \\
      &\qquad \mbox{} + (-a_1a_2^3 + a_1^2a_2a_3 + a_2^2a_3 - 3a_1a_3^2 + 4a_1a_2a_4 - 3a_3a_4 - 9a_1a_6)x^2z^2 \\
      &\qquad \mbox{} + (a_1a_2^2a_3 - a_1^2a_3^2 - 3a_2a_3^2 - a_1a_3a_4 - 3a_1^2a_6 + 2a_4^2 - 6a_2a_6)yz^3 \\
      &\qquad \mbox{} + (-a_1a_2a_3^2 - a_1a_2^2a_4 + 2a_1^2a_3a_4 - a_1^3a_6 - 2a_3^3 + a_2a_3a_4 + 4a_1a_4^2 - 3a_1a_2a_6 - 9a_3a_6)xz^3 \\
      &\qquad \mbox{} + (-a_2a_3^3 + a_1a_3^2a_4 - a_1a_2^2a_6 + a_3a_4^2 - 3a_2a_3a_6 + 3a_1a_4a_6)z^4, \\
      \beta_2 &= y^4 + a_1xy^3 + (a_1a_2 - 2a_3)y^3z + (a_1^2a_2 - a_2^2 - 3a_1a_3 + 3a_4)xy^2z \\
      &\qquad \mbox{} + (-2a_1a_2^2 + 6a_1a_4)x^2yz + (a_2^3 - a_1a_2a_3 + a_1^2a_4 - 5a_2a_4 + 18a_6)y^2z^2 \\
      &\qquad \mbox{} + (a_1a_2^3 - 2a_1^2a_2a_3 + a_1^3a_4 - a_2^2a_3 + 3a_1a_3^2 - 6a_1a_2a_4 + 3a_3a_4 + 27a_1a_6)xyz^2 \\
      &\qquad \mbox{} + (-a_2^4 + 2a_1a_2^2a_3 - a_1^2a_2a_4 + 6a_2^2a_4 - 6a_1a_3a_4 + 9a_1^2a_6 - 9a_4^2)x^2z^2 \\
      &\qquad \mbox{} + (a_2^3a_3 - a_1a_2a_3^2 + a_1^3a_6 + 2a_3^3 - 5a_2a_3a_4 - a_1a_4^2 + 3a_1a_2a_6 + 18a_3a_6)yz^3 \\ 
      &\qquad \mbox{} + (a_1^2a_2a_3^2 - a_1^3a_3a_4 + a_1^4a_6 + 2a_2^2a_3^2 - a_1a_3^3 - a_2^3a_4 - 2a_1^2a_4^2 + 6a_1^2a_2a_6 - 6a_3^2a_4 + 3a_2a_4^2 + 9a_2^2a_6 - 27a_4a_6)xz^3 \\
      &\qquad \mbox{} + (a_1a_2a_3^3 - a_1^2a_3^2a_4 + a_1^3a_3a_6 - a_3^4 + a_2a_3^2a_4 - 2a_1a_3a_4^2 - a_2^3a_6 + 6a_1a_2a_3a_6 - a_4^3 - 9a_3^2a_6 + 9a_2a_4a_6 - 27a_6^2)z^4, \\
      \gamma_2 &= 8y^3z + 12a_1xy^2z + 6a_1^2x^2yz + (a_1^3 + 12a_3)y^2z^2 + (a_1^4 + 12a_1a_3)xyz^2 + (-a_1^3a_2 + 3a_1^2a_3)x^2z^2 \\
      &\qquad \mbox{} + (a_1^3a_3 + 6a_3^2)yz^3 + (-a_1^3a_4 + 3a_1a_3^2)xz^3 + (-a_1^3a_6 + a_3^3)z^4.
    \end{align*}
  }
\end{ex}

\begin{ex}
  Let $C$ be the Weierstrass curve over $\bb Q_2$ defined by the polynomial $W = y^2z - x^3 - z^3$.
  Then note that $\alpha_2 = 2xy^3 - 18xyz^2$, $\beta_2 = y^4 + 18y^2z^2 - 27z^4$, and $\gamma_2 = 8y^3z$.
  Let $P$ be a point of $C$ reducing to $(2:1:8)$ modulo $16$, which exists by Hensel's Lemma.
  Note that $(\partial W/\partial z)(P) \equiv 1 \pmod{16}$, so we can apply \autoref{intro_doubling}.
  Hence $2P$ reduces to $(4:1:0)$ modulo $16$, $4P$ reduces to $(8:1:0)$ modulo $16$, and $8P$ reduces to $(0:1:0)$ modulo $16$.
\end{ex}


\section{Definitions, statement of main theorem}

Rings in this paper are always assumed to be commutative, associative, and unital.

\subsection{Weierstrass curves over schemes}
\label{const_sect_defn}

For now, we will use the language of schemes.
For a summary of this section in terms of Weierstrass curves over rings, we refer to the end of this section.


We start by giving some general definitions.
\begin{defn}
  Let $S$ be a scheme.
  A {\em genus $1$ curve} over $S$ is a flat, proper $S$-scheme locally of finite presentation, of which the geometric fibres are integral curves of arithmetic genus $1$.
  A {\em pointed genus $1$ curve} over $S$ is a pair $(C,s)$ of a genus $1$ curve over $S$, and a section $s \in C(S)$.
  An {\em elliptic curve} over $S$ is a pointed genus $1$ curve $(C,s)$ in which $C$ is smooth over $S$.
\end{defn}

We now define the main objects of study in this paper.
\begin{defn}
  A {\em Weierstrass curve} $C$ over $S$ is a closed subscheme of $\bb P^2_S$ defined by a homogeneous polynomial of the form
  \begin{equation}
    W = y^2z + a_1xyz + a_3yz^2 - x^3 - a_2x^2z - a_4xz^2 - a_6z^3
    \label{const_eqn_weierstrass}
  \end{equation}
  with $a_1, a_2, a_3, a_4, a_6 \in \sh O_S(S)$.
  The {\em discriminant} $\Delta \in \sh O_S(S)$ of $C$ is the discriminant of the equation (\ref{const_eqn_weierstrass}), see e.g.~\cite[Def.~2.7]{enge}.
\end{defn}
We will show later in \autoref{const_flatlfp} that a Weierstrass curve over a scheme $S$ is in fact a genus $1$ curve over $S$.

We now describe the functor of points of the Weierstrass curve $C$ explicitly.
If $T$ is an $S$-scheme, $\sh L$ is an invertible $\sh O_T$-module, $s_0, s_1, s_2 \in \sh L(T)$, and $f \in \sh O_S(S)[x,y,z]$ homogeneous of degree $d$, then we have a well-defined section $f(s_0,s_1,s_2) \in \sh L^{\otimes d}(T)$.
We say that $s_0, s_1, s_2$ {\em generate} $\sh L$ if $(s_0)_x, (s_1)_x, (s_2)_x$ generate $\sh L_x$ as an $\sh O_{T,x}$-module for all $x \in T$.
The following is a special case of a well-known result, see e.g.~\cite[Lem.~\href{http://stacks.math.columbia.edu/tag/01O4}{01O4}]{stacksproject}, as the graded $\sh O_S$-algebra $\sh O_S[x,y,z]/(W)$ is generated by its degree $1$ part.

\begin{prop}\label{const_functorofpoints}
  Let $S$ be a scheme, and let $C$ be a Weierstrass curve over $S$ defined by $W$ as in (\ref{const_eqn_weierstrass}).
  Then for all $S$-schemes $T$, we have
  \[
    C(T) = \set{(\sh L,s_0,s_1,s_2) : 
      \begin{array}{c}
        \sh L \text{ invertible $\sh O_T$-module},\ s_0,s_1,s_2 \in \sh L(T) \text{ generating $\sh L$} \\
        W(s_0,s_1,s_2) = 0 \text{ in $\sh L^{\otimes 3}$}
      \end{array}
    }\Big/\iso.
  \]
  Here, $(\sh L, s_0, s_1, s_2) \iso (\sh M, t_0, t_1, t_2)$ if and only if there exists an isomorphism $\sh L \to \sh M$ of $\sh O_T$-modules mapping $s_i$ to $t_i$ for all $i \in \set{0,1,2}$.
\end{prop}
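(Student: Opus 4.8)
\emph{Proof plan.}\quad The approach is to invoke the standard description of the functor of points of a relative Proj, applied to the graded $\sh O_S$-algebra $\sh B := \sh O_S[x,y,z]/(W)$, and then to spell out what the resulting data mean concretely. Since $W$ is homogeneous, $(W)$ is a homogeneous ideal, so $\sh B$ is a quasi-coherent graded $\sh O_S$-algebra, and it is generated by its degree-$1$ part $\sh B_1 = \sh O_S x \oplus \sh O_S y \oplus \sh O_S z$; moreover $C = \underline{\mathrm{Proj}}_S \sh B$ as a closed subscheme of $\bb P^2_S = \underline{\mathrm{Proj}}_S \sh O_S[x,y,z]$. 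Hence, by \cite[Lem.~01O4]{stacksproject}, for any $S$-scheme $T$ with structure map $f \colon T \to S$ the set $C(T)$ is in natural bijection with the set of isomorphism classes of pairs $(\sh L, \psi)$, where $\sh L$ is an invertible $\sh O_T$-module and $\psi \colon f^* \sh B \to \bigoplus_{d \ge 0} \sh L^{\otimes d}$ is a surjective homomorphism of graded $\sh O_T$-algebras; here $(\sh L, \psi)$ and $(\sh M, \phi)$ are declared isomorphic if there is an isomorphism $\sh L \to \sh M$ of $\sh O_T$-modules carrying $\psi$ to $\phi$.

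It then remains to translate this datum into the one appearing in the statement. Because $\sh O_S[x,y,z]$ is free, $f^* \sh B$ is canonically $\sh O_T[x,y,z]/(W)$, and since $\sh O_T[x,y,z]$ is the free commutative graded $\sh O_T$-algebra on the degree-$1$ elements $x,y,z$, giving a graded $\sh O_T$-algebra homomorphism $\sh O_T[x,y,z] \to \bigoplus_{d \ge 0} \sh L^{\otimes d}$ is the same as giving a triple $s_0, s_1, s_2 \in \sh L(T)$, namely the images of $x, y, z$; under this correspondence a homogeneous polynomial $g$ of degree $d$ is sent to the section $g(s_0, s_1, s_2) \in \sh L^{\otimes d}(T)$ defined just before the statement. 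As $W$ is homogeneous of degree $3$ and $(W)$ is the homogeneous ideal it generates, such a homomorphism factors through $\sh O_T[x,y,z]/(W)$ if and only if $W(s_0, s_1, s_2) = 0$ in $\sh L^{\otimes 3}(T)$. Finally, since $W$ has degree $\ge 2$ the degree-$1$ part of $\sh B$ is still free on $x, y, z$, and as $\sh B$ is generated in degree $1$, $\psi$ is surjective if and only if its values in degree $1$, i.e.~$s_0, s_1, s_2$, generate $\sh L$; and the isomorphism relation on pairs $(\sh L, \psi)$ is visibly the relation $\iso$ of the statement. Combining these identifications gives the claimed description of $C(T)$.

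The only step with any real content is the descent claim in the second paragraph --- that the vanishing of the single section $W(s_0, s_1, s_2)$ is exactly what is needed for $\psi$ to pass to the quotient $\sh O_T[x,y,z]/(W)$ --- which comes down to the graded universal property of the polynomial algebra together with the fact that $W$ is a homogeneous generator of the defining ideal; once the evaluation formalism $g \mapsto g(s_0,s_1,s_2)$ recalled before the statement is in place, everything else is bookkeeping of the cited representability lemma and of the (trivial, since the polynomial algebra is free in each degree) base change $f^*\big(\sh O_S[x,y,z]/(W)\big) = \sh O_T[x,y,z]/(W)$, under which $W$ is identified with its image along $\sh O_S(S) \to \sh O_T(T)$.
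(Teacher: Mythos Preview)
Your proposal is correct and follows exactly the approach the paper indicates: the paper does not give a proof beyond the remark preceding the proposition that this is a special case of \cite[Lem.~01O4]{stacksproject}, using that the graded $\sh O_S$-algebra $\sh O_S[x,y,z]/(W)$ is generated by its degree~$1$ part. You have simply unwound that citation in detail---identifying $C$ with $\underline{\mathrm{Proj}}_S\sh B$, invoking the cited lemma, and translating the datum $(\sh L,\psi)$ into the triple $(s_0,s_1,s_2)$---so there is nothing to compare.
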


If $\sh L = \sh O_T$, then we denote the class of $(\sh L, s_0, s_1, s_2)$ by $(s_0:s_1:s_2)$.

\begin{prop}\label{const_flatlfp}
  Let $S$ be a scheme, and let $C$ be a Weierstrass curve over $S$ defined by $W$ as in (\ref{const_eqn_weierstrass}).
  Then $\pth*[big]{C,(0:1:0)}$ is a pointed genus $1$ curve over $S$.
\end{prop}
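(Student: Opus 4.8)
The plan is to verify the three defining conditions in the definition of a pointed genus $1$ curve: flatness and proper local-finite-presentation of $C \to S$, and the geometric-fibre condition that each fibre is an integral curve of arithmetic genus $1$. The section $(0:1:0)$ lies on $C$ since substituting $(x,y,z) = (0,1,0)$ into $W$ gives $0$, so the only real work is checking that $C \to S$ is flat, proper, locally of finite presentation, with the correct geometric fibres.

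First I would treat the "shape" properties. Since $C$ is a closed subscheme of $\bb P^2_S$ cut out by a single homogeneous equation, properness is immediate (closed immersion followed by the proper morphism $\bb P^2_S \to S$), and local finite presentation is clear because everything is defined by one explicit polynomial with coefficients in $\sh O_S(S)$ — in fact the whole situation is pulled back from $\operatorname{Spec} \bb Z[a_1,a_2,a_3,a_4,a_6]$, which is Noetherian, so finite presentation reduces to that universal case. For flatness, the cleanest route is again base change to the universal situation: it suffices to show the universal Weierstrass curve $\sh C \to \operatorname{Spec} \bb Z[a_1,a_2,a_3,a_4,a_6] =: \operatorname{Spec} A$ is flat. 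Here I would use the standard criterion that a hypersurface $V(W) \subseteq \bb P^2_A$ is flat over $A$ provided $W$ is a nonzerodivisor on $\bb P^2_A$ fibrewise in the appropriate sense; concretely, on each standard affine chart the relevant quotient is $A[u,v]/(W)$ (after dehomogenizing) and one checks $W$ is a nonzerodivisor on $A[u,v]$, which holds because $A[u,v]$ is a domain and $W$ is not the zero polynomial. An alternative, perhaps more uniform, argument: $C \to S$ has constant Hilbert polynomial on fibres (a plane cubic has Hilbert polynomial $3m$) and $\bb P^2_S$ is flat over $S$, so flatness of $C$ follows from flatness of $\sh O_S[x,y,z]/(W)$ in each degree, which one sees from the exact sequence $0 \to \sh O_S[x,y,z](-3) \xrightarrow{\cdot W} \sh O_S[x,y,z] \to \sh O_S[x,y,z]/(W) \to 0$ being a sequence of flat $\sh O_S$-modules in each graded piece — provided multiplication by $W$ is injective, which again needs $W$ to be a nonzerodivisor, and this is a statement about the fibres that reduces to the universal (domain) case.

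Next I would handle the geometric-fibre condition. After base change, a geometric fibre is the plane projective cubic $C_k = V(W_k) \subseteq \bb P^2_k$ over an algebraically closed field $k$, where $W_k$ is a Weierstrass polynomial over $k$. I need: $C_k$ is an integral curve of arithmetic genus $1$. Arithmetic genus: any plane curve of degree $d$ in $\bb P^2$ has arithmetic genus $\binom{d-1}{2}$, so $d = 3$ gives $p_a = 1$ — this is purely a computation with the Hilbert polynomial of $\sh O_{\bb P^2_k}/(W_k)$ and requires nothing about smoothness. Dimension $1$ is clear. The substantive point is integrality, i.e. that $W_k$ is irreducible as a polynomial over $k$ (it is automatically reduced once irreducible, being a single irreducible element; or one argues $C_k$ is Cohen–Macaulay, hence has no embedded points, and is generically reduced). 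Irreducibility of a Weierstrass cubic: the key observations are that any monic-in-$y$ (degree $2$) factorization would have to be $\bigl(y - f(x,z)\bigr)\bigl(y - g(x,z)\bigr)$ with $f,g$ linear, whose product cannot produce the $-x^3$ term; a factor that is a line $\ell(x,y,z)$ must divide $W_k$, but $W_k$ restricted to that line is a cubic in one variable that does not vanish identically (one can check the coefficient of $x^3$ or $y^2z$ survives unless $\ell$ is chosen to kill $z$, and $W_k|_{z=0} = y^2 z - x^3|_{z=0} = -x^3 \ne 0$); so no linear factor exists, and hence $W_k$ is irreducible. I would phrase this as: "$W_k \bmod z$ equals $-x^3$ up to a unit, so $z \nmid W_k$; and modulo any linear form not proportional to $z$, one reduces to a cubic in two remaining homogeneous variables with nonzero $x^3$ or $y^2z$ coefficient."

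The main obstacle — really the only place where genuine care is needed rather than routine bookkeeping — is the flatness verification, specifically establishing that $W$ (and its dehomogenizations) are nonzerodivisors in a way that is uniform over $S$. The slick resolution is the reduction to the universal base $A = \bb Z[a_1,\dots,a_6]$, where $A[x,y,z]$ is an integral domain so the nonzerodivisor claim is trivial, and then invoking stability of flatness under base change; I expect the author to do exactly this, and I would follow suit, remarking that properness and finite presentation also descend from (ascend to) the Noetherian universal case. A secondary, milder subtlety is making sure the geometric fibre genus computation is genuinely "arithmetic genus $1$" and not accidentally conflated with the smooth case — but since every plane cubic has $p_a = 1$ regardless of singularities, this is immediate once integrality is in hand.
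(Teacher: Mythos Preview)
Your overall strategy is sound, but there is a genuine gap in the flatness argument. You correctly state the relevant criterion (``nonzerodivisor on $\bb P^2_A$ fibrewise''), but then you justify it by saying ``$A[u,v]$ is a domain and $W$ is not the zero polynomial''. That only shows $W$ is a nonzerodivisor \emph{globally}, which is not enough: for example, $W = tu$ in $k[t][u,v]$ is a nonzerodivisor in a domain, yet $k[t][u,v]/(tu)$ is not flat over $k[t]$. What you actually need is that $W$ remains a nonzerodivisor after tensoring with every residue field of $A$, i.e.\ that the dehomogenized Weierstrass polynomial is nonzero over every field. This is true (the $x^3$ term survives), and once you say so your argument goes through; but as written the justification is the wrong one.

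The paper's proof is different and rather slicker on exactly this point. It observes that $C$ is covered by the two standard affine charts $D_+(y)$ and $D_+(z)$ (the chart $D_+(x)$ is not needed, since $W(x,0,0) = -x^3$ forces any point of $C$ to have $y$ or $z$ a unit). On each of these charts, because $W$ is \emph{monic in $x$} of degree~$3$, the coordinate ring is visibly free as an $R$-module (basis: monomials with $x$-degree $\leq 2$), hence flat. No reduction to the universal base, no fibrewise nonzerodivisor criterion, no Hilbert polynomial --- just freeness from monicity. Your approach, once patched, is more portable (it would work for any hypersurface whose defining form is nowhere-vanishing), whereas the paper's approach exploits the specific shape of the Weierstrass equation to get a one-line argument.

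For the remaining conditions (properness, local finite presentation, integrality and arithmetic genus of the geometric fibres), the paper simply declares them ``well-known'' and proves nothing. Your treatment of these is fine and more detailed than the paper's; in particular your irreducibility argument for $W_k$ is correct.
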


\begin{proof}
  Except for flatness, our claim is well-known.
  We check affine locally on the base that $C$ is flat over $S$.
  So assume $S = \Spec R$.
  Then $C$ is covered by the two standard affine open covers in which $y$ and $z$, respectively, are invertible.
  Since $W$ is monic with respect to $x$, the corresponding $R$-algebras are free as $R$-modules, hence flat.
  We deduce that $C$ is flat over $S$.
\end{proof}

We deduce that $C$ is smooth (of relative dimension $1$) if and only if $C$ is smooth on all geometric fibres, i.e.~$\Delta \neq 0$ in all geometric points of $S$ (see e.g.~\cite[Prop.~2.25]{enge}).
This holds if and only if $\Delta \in \sh O_S(S) \mg$.

In general, let $C\sm$ be the smooth locus of $C$ over $S$.
Then we have the following description of the functor of points of $C\sm$.

\begin{prop}\label{const_functorofsmoothpoints}
  Let $S$ be a scheme, and let $C$ be a Weierstrass curve over $S$ defined by $W$ as in (\ref{const_eqn_weierstrass}).
  Then for all $S$-schemes $T$, the set $C\sm(T)$ is the subset of $C(T)$ consisting of the classes of the $4$-tuples $(\sh L, s_0, s_1, s_2)$ such that the sections
  \[
    (\partial W / \partial x)(s_0,s_1,s_2),(\partial W / \partial y)(s_0,s_1,s_2),(\partial W / \partial x)(s_0,s_1,s_2) \in \sh L^{\ts 2}(T)
  \]
  generate $\sh L^{\ts 2}$.
\end{prop}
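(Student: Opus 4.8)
The plan is to identify the open subscheme $C\sm \subseteq C$ with the locus on which the images of $\partial W/\partial x$, $\partial W/\partial y$, $\partial W/\partial z$ in $\sh O_C(2) = \sh O_{\bb P^2_S}(2)|_C$ generate, and then to read off the asserted description of $C\sm(T)$ from the functor of points of $C$ given in \autoref{const_functorofpoints}.

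First I would reduce the determination of $C\sm$ to fibres. By \autoref{const_flatlfp} the structure morphism $C \to S$ is flat and locally of finite presentation, so a point $c \in C$ lies in the smooth locus if and only if the fibre $C_s$, where $s$ is the image of $c$, is smooth over $\kappa(s)$ at $c$ (cf.~\cite{stacksproject}). Since $C_s \subseteq \bb P^2_{\kappa(s)}$ is the plane curve cut out by the degree-$3$ form $W_s$, the Jacobian criterion (cf.~\cite[Prop.~2.25]{enge}) shows that this occurs exactly when $\partial W/\partial x$, $\partial W/\partial y$, $\partial W/\partial z$ do not all vanish at $c$. Since two open subschemes of $C$ with the same underlying set are equal, we conclude that $C\sm = C \setminus Z$, where $Z \subseteq C$ is the closed subscheme cut out by the three sections of $\sh O_C(2)$ obtained by restricting those three forms.

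Next I would translate this through \autoref{const_functorofpoints}. Let $T$ be an $S$-scheme and let $c \in C(T)$ correspond to a morphism $f \colon T \to C$, so that $\sh L = f^*\sh O_C(1)$ and $s_0, s_1, s_2$ are the images under $f^*$ of $x, y, z$; for any homogeneous $g \in \sh O_S(S)[x,y,z]$ of degree $d$ one then has $f^*(g) = g(s_0,s_1,s_2)$ in $\sh L^{\ts d}(T)$, under the canonical identification $f^*\sh O_C(d) = \sh L^{\ts d}$. Because $C\sm \hookrightarrow C$ is an open immersion, $c$ lies in $C\sm(T)$ if and only if $f$ factors through $C\sm$, equivalently the underlying continuous map of $f$ avoids $Z$, equivalently $f^{-1}(Z) = \emptyset$. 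Now $f^{-1}(Z)$ is the closed subscheme of $T$ cut out by the three sections $(\partial W/\partial x)(s_0,s_1,s_2)$, $(\partial W/\partial y)(s_0,s_1,s_2)$, $(\partial W/\partial z)(s_0,s_1,s_2) \in \sh L^{\ts 2}(T)$, and the vanishing locus of a finite family of sections of an invertible sheaf is empty precisely when those sections generate the sheaf. This gives the claimed description of $C\sm(T)$; alternatively one may check the last two equivalences after restricting to an affine open of $T$ on which $\sh L$ is trivial, recovering the classical statement that $P = (s_0:s_1:s_2)$ is a smooth point exactly when the partial derivatives of $W$ evaluated at $P$ generate the unit ideal.

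The fibrewise Jacobian criterion and the bookkeeping of the pullbacks along $f$ are routine; the one point requiring some care, and where I expect the main (modest) obstacle to lie, is the final translation step: verifying that $f^{-1}(Z)$ really is the vanishing scheme of the three pulled-back partial derivatives, and that emptiness of this scheme is equivalent to the generation condition on $\sh L^{\ts 2}$, independently of any local trivialisation.
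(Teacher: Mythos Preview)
Your proposal is correct and follows essentially the same approach as the paper: reduce smoothness to the fibres via the flatness in \autoref{const_flatlfp}, apply the Jacobian criterion there, and conclude that $C\sm$ is the complement of the common zero locus of the three partials. The paper's proof stops at that identification and leaves the functor-of-points translation implicit, whereas you spell out carefully how a $T$-point factors through $C\sm$ exactly when the pulled-back partials generate $\sh L^{\ts 2}$; this extra detail is welcome but not a genuinely different route.
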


\begin{proof}
  We denote the structure morphism of $C$ by $f$.
  Note that for all $x \in C$, we have that $C$ is smooth in $x$ if and only if $C_{f(x)}$ is smooth in $x$, using \autoref{const_flatlfp}.
  The polynomials $\partial W/\partial x, \partial W/\partial y, \partial W/\partial z$ define the singular locus on every fibre, so $C\sm$ is the open subscheme of $C$ that is the complement of the common zero locus of these polynomials.
\end{proof}

The following shows by \autoref{const_flatlfp} that a smooth Weierstrass curve over a scheme $S$ admits a unique group scheme structure with zero section $(0:1:0)$, which is commutative.
\begin{thm}[{\cite[Prop.~II.2.7]{delignerapoport}}]\label{const_groupscheme}
  Let $S$ be a scheme, and let $C$ be a smooth genus $1$ curve over $S$.
  Then there exists a unique group scheme structure on $C$ with zero section $(0:1:0) \in C(S)$, which is commutative.
\end{thm}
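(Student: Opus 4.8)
This statement is classical (it is recorded above as \cite[Prop.~II.2.7]{delignerapoport}); the route I would take is to identify $C$ with its relative Jacobian. By hypothesis the structure morphism $f\colon C \to S$ is smooth, proper, flat and finitely presented, with geometrically integral fibres, and $C$ carries the section $e := (0:1:0)$; moreover $f$ is projective, for instance via the Weierstrass embedding attached to $\sh O_C(3e)$. First I would invoke Grothendieck's representability theorem for the relative Picard functor: since $f$ is projective, flat and finitely presented with geometrically integral fibres and admits a section, $\mathrm{Pic}_{C/S}$ is representable by a separated $S$-scheme, locally of finite presentation. Write $J := \mathrm{Pic}^0_{C/S}$ for the open and closed subgroup scheme whose geometric fibres are the identity components; since $f$ has one-dimensional fibres of arithmetic genus $1$, cohomological flatness shows that $J \to S$ is a smooth proper group scheme of relative dimension $1$, whose group law is induced by tensor product of invertible sheaves and hence is commutative.

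Next I would use the universal property of $J$, together with a suitable Poincar\'e bundle on $C \times_S J$, to produce the \emph{Abel--Jacobi morphism} $\varphi\colon C \to J$, which on $T$-points sends $P \in C(T)$ (after an fppf base change making $P$ an honest point, if need be) to the class of $\sh O_{C_T}(P - e_T) \in J(T)$; in particular $\varphi$ carries $e$ to the identity section of $J$. On each geometric fibre, Riemann--Roch on a genus-$1$ curve shows that $\varphi_{\bar s}$ is an isomorphism. Since $C$ and $J$ are both proper, flat and finitely presented over $S$, a morphism between them that is an isomorphism on every fibre is an isomorphism of $S$-schemes; hence $\varphi$ is an isomorphism carrying $e$ to the identity section, and transporting the group structure of $J$ along $\varphi$ equips $C$ with a commutative group scheme structure with zero section $e$. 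For uniqueness, suppose $m_1$ and $m_2$ are two group scheme structures on $C$ with the common zero section $e$. Because $f$ is proper, flat, finitely presented with geometrically connected and reduced fibres, the canonical map $\sh O_S \to f_\ast \sh O_C$ is an isomorphism and remains one after base change, so the \emph{rigidity lemma} applies: any $S$-morphism of $S$-group schemes $G \to H$ with $G$ of this kind and carrying identity to identity is a homomorphism. Applying this to $\mathrm{id}_C\colon (C,m_1) \to (C,m_2)$ forces $m_1 = m_2$; combined with the commutative structure produced above, this shows the group scheme structure is unique and commutative.

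The substantial input is the first step --- representability of $\mathrm{Pic}_{C/S}$ and the smoothness and properness of $\mathrm{Pic}^0_{C/S}$ --- which rests on Grothendieck's FGA results and cohomological flatness in relative dimension one; I would quote these rather than reprove them. Everything else (the fibrewise Riemann--Roch identification, the fact that a morphism of proper flat finitely presented $S$-schemes which is a fibrewise isomorphism is an isomorphism, and the rigidity lemma) is standard. I note that for the Weierstrass curves to which this theorem is applied later one can sidestep the Picard machinery altogether: the chord--tangent construction yields explicit rational formulas for addition on $C\sm$, and associativity, being a closed condition, need only be verified on the generic fibre of the universal Weierstrass curve over $\bb Z[a_1,\dots,a_6]$, where it is the classical statement over a field.
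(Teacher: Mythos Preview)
The paper does not supply its own proof of this theorem; it simply cites \cite[Prop.~II.2.7]{delignerapoport} and uses the result as a black box. There is therefore nothing in the paper to compare your argument against.

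That said, your sketch is the standard route and is essentially the argument one finds in Deligne--Rapoport: identify $C$ with $\mathrm{Pic}^0_{C/S}$ via the Abel--Jacobi map (an isomorphism fibrewise by Riemann--Roch in genus~$1$, hence globally by the fibrewise criterion for flatness plus properness), and deduce uniqueness from the rigidity lemma. Two minor remarks. First, the parenthetical ``after an fppf base change making $P$ an honest point, if need be'' is unnecessary: the section $e$ already guarantees that the relative Picard functor is an honest Zariski sheaf, so a $T$-point $P$ gives a genuine line bundle on $C_T$ without sheafification. Second, the statement as written names the section $(0:1:0)$, which only makes literal sense once $C$ is embedded as a Weierstrass curve; you were right to read $e$ as ``the given section''. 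Your closing observation---that for the Weierstrass curves actually used in the paper one could bypass the Picard machinery via explicit chord--tangent formulas and check associativity on the generic fibre---is correct, and is in spirit close to how the paper later argues in \autoref{const_groupscheme_cor} (reducing uniqueness to the smooth locus over an integral base), though the paper still invokes the cited theorem for existence rather than writing the formulas down.
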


Now we will show that any Weierstrass curve over a scheme $S$ admits a natural group scheme structure on its smooth locus.
So let $C$ be an arbitrary Weierstrass curve over a scheme $S$.
Note that $(0:1:0) \in C\sm(S)$, since $(\partial W / \partial z)(0,1,0) = 1$ generates $\sh O_S$.
We first consider the {\em universal Weierstrass curve}.

\begin{defn}
  Let $\uni A = \bb Z[a_1,a_2,a_3,a_4,a_6]$. Then the {\em universal Weierstrass curve} $\uni C$ is the Weierstrass curve over $\uni A$ defined by
  \[
    \uni W = y^2z + a_1xyz + a_3yz^2 - x^3 - a_2x^2z - a_4xz^2 - a_6z^3 \in \uni A[x,y,z].
  \]
\end{defn}

It is universal in the following sense.

\begin{prop}\label{const_univ}
  Let $S$ be a scheme, and let $C$ be a Weierstrass curve over $S$ defined by $W$ as in (\ref{const_eqn_weierstrass}).
  Then there exist unique morphisms $g \colon C \to \uni C$ and $h \colon S \to \Spec \uni A$ such the following diagram is commutative, and such that the outer square is Cartesian.

  \[
    \begin{tikzcd}
      C \arrow{d}{g} \arrow{r} & \bb P^2_S \arrow{d}{h_{\bb P^2_{\uni A}}} \arrow{r} & S \arrow{d}{h} \\
      \uni C \arrow{r} & \bb P^2_{\uni A} \arrow{r} & \Spec \uni A
    \end{tikzcd}
  \]
\end{prop}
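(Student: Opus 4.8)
The plan is to build $h$ first --- it is forced by the coefficients of $W$ --- then extract $g$ from it, and finally check uniqueness. Throughout I would use that $\Spec\uni A$ is affine, so that a morphism $S \to \Spec\uni A$ is the same datum as a ring homomorphism $\uni A = \bb Z[a_1,a_2,a_3,a_4,a_6] \to \sh O_S(S)$.

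First I would let $h\colon S \to \Spec\uni A$ correspond to the ring homomorphism $\uni A \to \sh O_S(S)$ sending each indeterminate $a_i$ to the coefficient $a_i$ of $W$; this is the only candidate, as the uniqueness part will confirm. Since projective space commutes with base change, the right-hand square is Cartesian with $h_{\bb P^2_{\uni A}}$ the base-change morphism, and under the resulting identification $\bb P^2_S = \bb P^2_{\uni A}\times_{\Spec\uni A}S$ the polynomial $\uni W$ pulls back to $W$. In particular $\uni W$ pulls back to a section vanishing on $C$, so the composite $C \hookrightarrow \bb P^2_S \to \bb P^2_{\uni A}$ factors uniquely through the closed subscheme $\uni C = V(\uni W)$; this defines $g$ and makes the left-hand square commute. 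Moreover $\uni C \times_{\bb P^2_{\uni A}} \bb P^2_S$ is the zero locus of the pullback of $\uni W$, which is $V(W) = C$, so the left-hand square is Cartesian, and hence so is the outer square by the pasting lemma for Cartesian squares. This gives existence.

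For uniqueness I would argue as follows. As the assertion is local on $S$, we may assume $S$ affine. Let $(g',h')$ be a second pair with the stated properties, and put $b_i = (h')^\#(a_i) \in \sh O_S(S)$. Commutativity of the left-hand square forces $C \hookrightarrow \bb P^2_S \to \bb P^2_{\uni A}$ to land in $\uni C$, i.e.\ the pullback of $\uni W$ along $h'_{\bb P^2_{\uni A}}$ --- which is $W$ with each $a_i$ replaced by $b_i$ --- vanishes on $C$; subtracting $W|_C = 0$ shows that $\sum_i (a_i - b_i)\,m_i$ vanishes on $C$, where $m_i$ runs over the monomials $xyz,\,-x^2z,\,yz^2,\,-xz^2,\,-z^3$. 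Restricting to the standard affine chart where $z$ is invertible, $C$ is cut out of $\bb A^2_S$ by the affine Weierstrass polynomial, whose leading coefficient in $y$ is a unit, so there $\sh O_C$ is free over $\sh O_S$ with basis the monomials $x^iy^j$, $j\in\set{0,1}$; since $1,x,x^2,y,xy$ are distinct basis elements, each coefficient $a_i - b_i$ must vanish. Hence $h' = h$, and then $g' = g$ because $\uni C \hookrightarrow \bb P^2_{\uni A}$ is a monomorphism.

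I expect the argument to be essentially bookkeeping, with no serious obstacle. The one point that is not purely formal is the uniqueness of $h$: there one must know that the five monomials above are linearly independent as sections of $\sh O_C$ over a suitable affine chart, and for this the explicit free-module description of $C$ already used in the proof of \autoref{const_flatlfp} is exactly what is needed. Everything else follows from the universal property of $\Spec$ of a polynomial ring and from the behaviour of $\bb P^2$ and of the hypersurface $V(\uni W)$ under base change.
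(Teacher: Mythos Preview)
Your proof is correct and follows the same route as the paper: define $h$ from the coefficients of $W$, obtain $g$ and the Cartesian property by base change, and deduce uniqueness of $g$ from the fact that $\uni C \hookrightarrow \bb P^2_{\uni A}$ is a monomorphism. The only difference is cosmetic: for uniqueness of $h$ the paper compares the pullback of $\uni W$ with $W$ directly inside $A[x,y,z]_3$ (both are Weierstrass forms of degree $3$, so one lying in the ideal of the other forces equality), whereas you restrict to the chart $z \neq 0$ and invoke the free-module basis of $\sh O_C$ there.
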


\begin{proof}
  It suffices to show this in the case that $S$ is affine; the general case follows from it by gluing.

  So suppose that $S = \Spec A$, and let $\phi$ denote the morphism $S \to \Spec \uni A$ given by the ring morphism $\uni A \to A$ sending every $a_i$ to the corresponding coefficient of $W$.
  Then $C = \uni C \fp[\uni A] S$ (via $\phi$), so $(\phi_{\uni C},\phi)$ is a pair satisfying the required properties.

  Suppose that $(g,h)$ is a pair of morphisms satisfying the required properties.
  Then note that $\uni C \to \bb P^2_{\uni A}$ is a closed immersion, hence a monomorphism, so it follows that $g$ is the unique morphism making the diagram commute.
  Moreover, $h_{\bb P^2_{\uni A}}$ induces a map from $C$ to $\uni C$ if and only if the induced map $\sh O_{\bb P^2_{\uni A}}(3)(\bb P^2_{\uni A}) \to \sh O_{\bb P^2_A}(3)(\bb P^2_A)$ (which is the map $\uni A[x,y,z]_3 \to A[x,y,z]_3$ induced by the ring morphism $\uni A \to A$) sends $\uni W$ to $W$, in other words, if and only if $h = \phi$.
\end{proof}

We now have the following corollary of \autoref{const_groupscheme}.

\begin{cor}\label{const_groupscheme_cor}
  Let $\uni S = \Spec \uni A$, and let $\uni C$ be the universal Weierstrass curve over $\uni S$.
  There exists a unique commutative group scheme structure on $\uni C\sm$ that has zero section $(0:1:0) \in \uni C\sm(\uni S)$.
\end{cor}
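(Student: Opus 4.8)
The strategy is to descend to the dense open of $\uni S$ over which $\uni C$ is honestly smooth, apply \autoref{const_groupscheme} there, and then spread the group law out. Since $\uni A = \bb Z[a_1,\dots,a_6]$ is an integral domain and the discriminant $\Delta \in \uni A$ is a nonzero polynomial, $\uni S_\Delta := \Spec \uni A[\Delta^{-1}]$ is a nonempty open subscheme of $\uni S$ containing its generic point, hence schematically dense. Over it $\uni C_\Delta := \uni C \times_{\uni S} \uni S_\Delta$ is smooth, by the discriminant criterion recalled after \autoref{const_flatlfp}, so by \autoref{const_flatlfp} it is a smooth genus $1$ curve, and it equals $\uni C\sm \times_{\uni S} \uni S_\Delta$. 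Thus \autoref{const_groupscheme} endows $\uni C_\Delta$ with a unique commutative group scheme structure $(m_0,i_0,e_0)$ over $\uni S_\Delta$ with $e_0 = (0:1:0)$.

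\emph{Uniqueness.} The scheme $\uni C\sm$ is smooth over the regular scheme $\uni S$, hence regular, and $\uni C\sm \times_{\uni S} \uni C\sm$, being smooth over $\uni C\sm$, is again regular. Both have connected geometric fibres over the connected base $\uni S$ (smooth loci of Weierstrass cubics, resp.\ products of two such), hence both are integral. If $(m,i,e)$ and $(m',i',e')$ are two commutative group scheme structures on $\uni C\sm$ over $\uni S$ with $e = e' = (0:1:0)$, then their restrictions over $\uni S_\Delta$ have zero section $(0:1:0)$ and hence coincide with $(m_0,i_0,e_0)$ by \autoref{const_groupscheme}. The preimages $(\uni C\sm \times_{\uni S} \uni C\sm)|_{\uni S_\Delta} \hookrightarrow \uni C\sm \times_{\uni S} \uni C\sm$ and $\uni C\sm|_{\uni S_\Delta} \hookrightarrow \uni C\sm$ are dense opens, and $\uni C\sm \to \uni S$ is separated, so morphisms into $\uni C\sm$ agreeing on such a dense open agree; hence $m = m'$, $i = i'$, $e = e'$.

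\emph{Existence.} We must extend $m_0$ and $i_0$ to morphisms over $\uni S$. For the inversion this is immediate: the linear substitution $(x:y:z) \mapsto (x:-y-a_1x-a_3z:z)$ is an automorphism of $\bb P^2_{\uni S}$ fixing $\uni W$, so it restricts to an automorphism of $\uni C\sm$ over $\uni S$, necessarily equal to $i_0$ over $\uni S_\Delta$ by the uniqueness just proved. For the multiplication, regard $m_0$ as a rational map $\uni C\sm \times_{\uni S} \uni C\sm \dashrightarrow \uni C$ into the proper $\uni S$-scheme $\uni C$, defined on the dense open preimage of $\uni S_\Delta$. Since the source $X := \uni C\sm \times_{\uni S} \uni C\sm$ is integral and normal, the valuative criterion of properness applied to the discrete valuation rings $\sh O_{X,\xi}$ at the codimension-one points $\xi$ of $X$ shows this rational map is a morphism on an open $U \subseteq X$ whose complement has codimension $\ge 2$; in particular it is defined along the generic points of the fibres of $X$ over $V(\Delta)$.

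The remaining step — which I expect to be the real work — is to extend this morphism across the indeterminacy locus of codimension $\ge 2$ and to see that the extension factors through the open subscheme $\uni C\sm$. \emph{Granting this, the rest is free:} writing $m$ for the resulting morphism, each group axiom (associativity, commutativity, the identity and inverse laws) is an equality of two morphisms from a reduced $\uni S$-scheme into the separated $\uni S$-scheme $\uni C\sm$ that already holds over the dense $\uni S_\Delta$, hence holds identically. I see two natural ways to supply the missing extension. The hands-on route is to write the chord-and-tangent group law explicitly on a suitable affine cover of $\uni C\sm \times_{\uni S} \uni C\sm$, with the usual cases for coincidences among the points and the point at infinity, and to check directly that these formulas glue and take values in $\uni C\sm$; this is classical but laborious. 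The conceptual route is to identify $\uni C\sm$ with the relative Picard scheme $\mathrm{Pic}^0_{\uni C/\uni S}$ via $P \mapsto [\sh O_{\uni C}([P] - [(0:1:0)])]$ — representable because $\uni C \to \uni S$ is projective, flat, with geometrically integral fibres — whose group law (tensor product of line bundles) and zero section ($\sh O_{\uni C}$) are manifestly defined over all of $\uni S$ and restrict correctly over $\uni S_\Delta$. Either way the corollary follows.
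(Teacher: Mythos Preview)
Your uniqueness argument is essentially the paper's: show that $\uni C\sm \times_{\uni S} \uni C\sm$ is integral (smooth with irreducible fibres over the integral base $\uni A$), note that $\uni C\sm$ is separated over $\uni S$, and use that the preimage of $\uni S_\Delta$ is a dense open on which any two candidate group laws must agree by \autoref{const_groupscheme}.

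Where you diverge sharply is on existence. The paper does not spread anything out: it obtains the commutative group scheme structure on $\uni C\sm$ in a single sentence by invoking \autoref{const_flatlfp} together with \autoref{const_groupscheme} (the cited Deligne--Rapoport result, which in its original form applies to the smooth locus of a pointed genus~$1$ curve and not only to globally smooth ones), and then devotes the remainder of the argument to uniqueness. Your extension argument, by contrast, is left explicitly incomplete at precisely the point you flag --- extending $m_0$ across the codimension~$\ge 2$ indeterminacy locus and ensuring the extension lands in $\uni C\sm$ rather than merely $\uni C$ --- and neither of your two proposed fixes is free. The chord-and-tangent route over $\uni A$ is genuinely laborious, and the $\mathrm{Pic}^0_{\uni C/\uni S}$ route is essentially \emph{how} Deligne--Rapoport prove the theorem you are meant to cite: you would be reproving the black box rather than using it. So existence here is the cheap half (given the literature), not the expensive one; your effort is misallocated.
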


\begin{proof}
  First note that by \autoref{const_flatlfp} and \autoref{const_groupscheme}, $\uni C\sm$ admits a group scheme structure with zero section $(0:1:0) \in \uni C\sm(\uni A)$.
  We show that it must be unique.

  Note that $\uni C\sm \fp[\uni A] \uni C\sm$ is smooth over $\uni A$, with irreducible fibres, and that $\uni A$ is integral.
  Hence (for example by combining \cite[Lem.~\href{http://stacks.math.columbia.edu/tag/004Z}{004Z}]{stacksproject} and \cite[Prop.~17.5.7]{ega43}) $\uni C\sm \fp[\uni A] \uni C\sm$ is an integral scheme.
  Also note that $\uni C\sm$ is separated over $\uni A$.

  Now consider $\uni R = \bb Z[a_1,a_2,a_3,a_4,a_6,1/\Delta]$.
  Then $\Spec R$ is an open subscheme of $\Spec A$.
  Let $\uni E = \uni C \fp[\uni A] \Spec \uni R$, and note that it is a non-empty open subscheme of $\uni C\sm$.
  As $\uni E$ is a smooth Weierstrass curve, $\uni E$ admits a unique group scheme structure with zero section $(0:1:0)$ in $\uni E(\uni R)$.
  Hence the group scheme structure on $\uni C\sm$ extends that of $\uni E$, so by the above this must be the unique extension of the group scheme structure of $\uni E$ to $\uni C\sm$.
\end{proof}

By universality of $\uni C$, this gives a natural commutative group scheme structure on every Weierstrass curve.

We now summarise in terms of Weierstrass curves over rings.
\begin{summ}
  Let $R$ be a ring.
  A {\em Weierstrass curve} $C$ over $R$ is a closed subscheme of $\bb P^2_R$ defined by a homogeneous polynomial of the form (\ref{const_eqn_weierstrass}) with $a_1,a_2,a_3,a_4,a_6 \in R$.
  The {\em discriminant} $\Delta \in R$ of $C$ is the discriminant of (\ref{const_eqn_weierstrass}), see e.g.~\cite[Def.~2.7]{enge}.
  The Weierstrass curve $C$ is smooth if and only if $\Delta \in R\mg$.

  We have an explicit description of the set $C(R)$ of $R$-valued points of $C$ as follows.
  If $M$ is an invertible $R$-module, $m_0, m_1, m_2 \in M$, and $f \in R[x,y,z]$ homogeneous of degree $d$, then we have $f(s_0,s_1,s_2) \in M^{\otimes d}$.
  Then
  \[
    C(R) = \set{(M,m_0,m_1,m_2) : 
      \begin{array}{c}
        M \text{ invertible $R$-module},\ m_0,m_1,m_2 \in M \\
        Rm_0+Rm_1+Rm_2=M,\ W(m_0,m_1,m_2) = 0 \text{ in $M^{\otimes 3}$}
      \end{array}
    }\Big/\iso,
  \]
  where $(M,m_0,m_1,m_2) \iso (N,n_0,n_1,n_2)$ if and only if there exists an isomorphism $M \to N$ of \mbox{$R$-modules} mapping $m_i$ to $n_i$ for all $i \in \set{1,2,3}$.
  If $M = R$, the class of $(M,m_0,m_1,m_2)$ is denoted $(m_0:m_1:m_2)$.

  As an example, if $R$ is a field (or more generally, any ring with trivial Picard group), then
  \[
    C(R) = \set*{(x,y,z)\in R^3: W(x,y,z)=0,\ Rx+Ry+Rz=R}/R\mg.
  \]

  In general, let $C\sm$ be the open subscheme of $C$ that is the complement of the common zero locus of $\partial W / \partial x, \partial W / \partial y, \partial W / \partial x$. 
  Then $C\sm(R)$ is the subset of $C(R)$ consisting of the classes of the $4$-tuples $(M, m_0, m_1, m_2)$ such that 
  \[
    (\partial W / \partial x)(m_0,m_1,m_2),\ (\partial W / \partial y)(m_0,m_1,m_2),\ (\partial W / \partial z)(m_0,m_1,m_2)
  \]
  generates $M^{\otimes 2}$.

  The set $C\sm(R)$ contains the point $(0:1:0)\in C(R)$, and has a natural structure of an abelian group with $(0:1:0)$ as neutral element.
\end{summ}

\subsection{Division polynomials and construction of $\alpha_n$, $\beta_n$, $\gamma_n$}
\label{const_sect_generic}

Let us recall the definition of division polynomials.
The main reference for this is \cite[Ch.~3]{enge}.
For this purpose, we consider a special (but yet also generic!) smooth Weierstrass curve.

Let $K$ be the algebraic closure of the field of fractions of the ring $\uni A = \bb Z[a_1,a_2,a_3,a_4,a_6]$.
Let $E$ be the elliptic curve over $K$ defined by the homogeneous Weierstrass polynomial
\[
  \uni W = y^2z + a_1xyz + a_3yz^2 - x^3 - a_2xz^2 - a_4x^2z - a_6z^3.
\]
Further, let $X$, $Y$ denote the rational functions $x/z$ and $y/z$, respectively.
Then the function field $K(E)$ of $E$ is the field of fractions of $K[X,Y]/(\uni W')$, where 
\[
  \uni W' = Y^2 + a_1XY + a_3Y - X^3 - a_2X^2 - a_4X - a_6
\]
is the affine Weierstrass polynomial.
We have a leading coefficient map $\Lambda \colon K(E)-0 \to K-0$, given by $f \mapsto \pth*[big]{(X/Y)^{-\ord[0]f} f}(0)$, where $\ord[0] f$ denotes the order of $f$ at the point at infinity of $E$.
This is well-defined as $X/Y$ has a simple zero at $0$.

Then we can define the division polynomials as follows.
\begin{defn}
  Let $n \in \bb Z - \set 0$.
  The {\em $n$-th division polynomial} $\Psi_n$ is the unique rational function $f$ in $K(E)$ with divisor $\sum_{P \in E[n]-0} \div{P} - (n^2-1) \div{0}$ and leading coefficient $n$.
  Moreover, we define $\Psi_0$ to be $0$.
\end{defn}

These division polynomials exist, as the degree of the divisors defining them is $0$, and for all $n \in \bb Z - 0$, we have $\sum_{P \in E[n]} P = 0$.

\begin{rk}
  The division polynomials satisfy the following relation for all $m, n \in \bb Z$ (see \cite[Prop.~3.53]{enge})
  \[
    \Psi_{m+n}\Psi_{m-n} = \Psi_{m+1}\Psi_{m-1}\Psi_n^2 - \Psi_{n+1}\Psi_{n-1}\Psi_m^2.
  \]
  Using $\Psi_1$, $\Psi_2$, $\Psi_3$, and $\Psi_4$, and the recurrence relation above, one can recursively compute the other division polynomials.
\end{rk}

Furthermore, define the following rational functions.
\begin{align*}
  \Phi_n &= X\Psi_n^2 - \Psi_{n-1}\Psi_{n+1} &
  \Omega_n &= \left\{
    \begin{array}{ll}
      1 & \text{if $n = 0$}\\
      \tfrac1{2\Psi_n}\pth*[big]{\Psi_{2n} - \Psi_n^2(a_1 \Phi_n + a_3 \Psi_n^2)} & \text{otherwise}
    \end{array}
    \right.
  \end{align*}

  We list some facts in the next propositions.
  \begin{prop}[{\cite[Sect.~3.6]{enge}}]
    Let $n \in \bb Z$.
    Then $\Psi_n, \Phi_n, \Omega_n \in \uni A[X,Y]/(\uni W')$.
  \end{prop}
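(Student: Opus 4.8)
The plan is to prove the three integrality claims together by induction on $n$, treating $\Psi_n$ first since $\Phi_n$ and $\Omega_n$ are defined in terms of the $\Psi_m$'s. The base cases are the explicit formulae: $\Psi_0 = 0$, $\Psi_1 = 1$, $\Psi_2 = 2Y + a_1 X + a_3$, $\Psi_3$ and $\Psi_4$ are the classical quartic and sextic polynomials in $X$ (with $a_1,a_3$-corrections), all of which visibly lie in $\uni A[X,Y]/(\uni W')$. One also checks $\Psi_{-n} = -\Psi_n$, which reduces everything to $n \geq 0$. For the inductive step I would use the recurrence $\Psi_{m+n}\Psi_{m-n} = \Psi_{m+1}\Psi_{m-1}\Psi_n^2 - \Psi_{n+1}\Psi_{n-1}\Psi_m^2$ quoted in the preceding remark: specializing this to produce $\Psi_{2m}$ and $\Psi_{2m+1}$ from $\Psi_{m-1},\dots,\Psi_{m+2}$ gives formulae of the shape $\Psi_{2m+1} = \Psi_{m+2}\Psi_m^3 - \Psi_{m-1}\Psi_{m+1}^3$ and $\Psi_2 \Psi_{2m} = \Psi_m(\Psi_{m+2}\Psi_{m-1}^2 - \Psi_{m-2}\Psi_{m+1}^2)$, so that by induction the right-hand sides lie in $\uni A[X,Y]/(\uni W')$; for $\Psi_{2m}$ one must additionally argue that the right-hand side is divisible by $\Psi_2 = 2Y + a_1X + a_3$ in $\uni A[X,Y]/(\uni W')$, which follows because we already know a priori (over $K$) that $\Psi_{2m}$ is a rational function lying in this ring, so the quotient is forced.

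Once the $\Psi_n$ are known to be integral, $\Phi_n = X\Psi_n^2 - \Psi_{n-1}\Psi_{n+1}$ is immediately integral. For $\Omega_n$ the definition divides by $2\Psi_n$, so again I would use the a priori fact that $\Omega_n \in K(E)$ is a specific rational function (with a known divisor) and show that the numerator $\Psi_{2n} - \Psi_n^2(a_1\Phi_n + a_3\Psi_n^2)$, which by the previous paragraph lies in $\uni A[X,Y]/(\uni W')$, is divisible there by $\Psi_n$; the quotient, being simultaneously a rational function in $K(E)$ equal to $\Omega_n$ and an element of $\uni A[X,Y]/(\uni W')$ times a unit, must itself be integral. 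More precisely, since $\uni A[X,Y]/(\uni W')$ is integrally closed in $K(E)$ (it is a polynomial ring over a UFD, hence normal), it suffices to exhibit $\Omega_n$ as a ratio $g/h$ with $g,h \in \uni A[X,Y]/(\uni W')$ and $h$ not a zero divisor, together with the divisor computation showing $\Omega_n$ has no poles along the relevant divisors — but in fact the cleanest route is just: numerator $\in \uni A[X,Y]/(\uni W')$, and it vanishes to the appropriate order on each component of the zero locus of $\Psi_n$, so $2\Psi_n$ divides it in $\uni A[X,Y]/(\uni W')$.

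The main obstacle is the two divisibility claims — that $\Psi_2 \mid \Psi_{2m}$-type expression and $\Psi_n \mid$ the $\Omega_n$-numerator, both inside the ring $\uni A[X,Y]/(\uni W')$ rather than merely in the fraction field $K(E)$. The subtlety is that divisibility in a fraction field does not automatically descend to a subring, so one genuinely needs normality of $\uni A[X,Y]/(\uni W')$ (equivalently, that $\uni A[X,Y]/(\uni W')$ is a regular ring, or is integrally closed), or else a direct coefficient-wise analysis of the recurrence to see the cancellation explicitly. I expect the paper either to invoke the classical integrality of division polynomials from \cite{enge} as a black box and then only deal with the $a_1,a_3$-terms by hand, or to carry out the normality argument; given the setup here the former is most likely, in which case the statement is essentially bookkeeping to confirm that the classical arguments over a field go through verbatim with $\uni A$ in place of the base field. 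I would present it that way, flagging the normality of $\uni A[X,Y]/(\uni W')$ as the one structural input that makes the $\Omega_n$ division legitimate.
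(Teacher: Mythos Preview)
The paper gives no proof of this proposition: it is stated with the citation to \cite[Sect.~3.6]{enge} and used as a black box, precisely the first of the two options you anticipated in your final paragraph. Your inductive sketch via the recurrence is the standard route taken in that reference, so there is nothing further to compare.

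One correction to your sketch: you assert that $\uni A[X,Y]/(\uni W')$ ``is a polynomial ring over a UFD, hence normal'', but it is a hypersurface quotient, not a polynomial ring. The ring is nonetheless normal --- as a hypersurface in the regular ring $\uni A[X,Y]$ it is Cohen--Macaulay, and its singular locus has codimension $2$ (singular fibres occur only over $V(\Delta)$ and contribute isolated points) --- so your conclusion survives, but the stated justification does not. Note also that normality only gives integral closure in the ring's own fraction field, not in the larger field $K(E)$; for the divisibility step you should phrase the argument in $\operatorname{Frac}\bigl(\uni A[X,Y]/(\uni W')\bigr)$, which is harmless since all the $\Psi_m$ and $\Phi_m$ already live there.
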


  \begin{prop}
    Let $n \in \bb Z$.
    Then $\Lambda \Phi_n = \Lambda \Omega_n = 1$, and $\ord[0] \Phi_n = -2n^2$, $\ord[0] \Omega_n = -3n^2$.
  \end{prop}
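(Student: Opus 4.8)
\emph{Proof plan.} The plan is to reduce everything to the behaviour of $\ord[0]$ and $\Lambda$ on $X$, $Y$ and on the division polynomials $\Psi_m$, and then to read off the four assertions from the identities $\Phi_n = X\Psi_n^2 - \Psi_{n-1}\Psi_{n+1}$ and, for $n \ne 0$, $\Omega_n = \tfrac1{2\Psi_n}\bigl(\Psi_{2n} - \Psi_n^2(a_1\Phi_n + a_3\Psi_n^2)\bigr)$. First one sets up the elementary formalism: since $0 = (0:1:0)$ is a smooth point of $E$, the local ring $\sh O_{E,0}$ is a discrete valuation ring with valuation $\ord[0]$, so $\ord[0]$ is additive and $\Lambda$ is multiplicative, $\Lambda(fg) = \Lambda(f)\Lambda(g)$ and $\Lambda(1/f) = 1/\Lambda(f)$ for $f,g \in K(E) - 0$. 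Moreover $\ord[0]$ and $\Lambda$ are unchanged when a summand of strictly larger order is added, and $\Lambda(f+g) = \Lambda(f) + \Lambda(g)$ whenever $\ord[0]f = \ord[0]g$ and $\Lambda(f) + \Lambda(g) \ne 0$.

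Next I would pin down the generators. The functions $X = x/z$ and $Y = y/z$ are regular and non-constant on $E - 0$, hence each has its only pole at $0$; comparing the orders of the monomials in the relation $\uni W' = 0$, using the given equality $\ord[0](X/Y) = \ord[0]X - \ord[0]Y = 1$, forces $\ord[0]X = -2$ and $\ord[0]Y = -3$. Then $\Lambda(X/Y) = 1$ gives $\Lambda X = \Lambda Y$ by multiplicativity, and dividing $\uni W'$ by $Y^2$ shows $1 - X^3/Y^2$ has positive order, i.e.~$(X^3/Y^2)(0) = 1$, so $\Lambda X = \Lambda Y = 1$. Finally, for $m \ne 0$ the divisor defining $\Psi_m$ — using $E[m] = E[-m]$ and $\#E[m] = m^2$, valid as $K$ has characteristic zero — gives $\ord[0]\Psi_m = 1 - m^2$, while $\Lambda\Psi_m = m$ by definition; also $\Psi_m = 0$ exactly when $m = 0$, and $\Psi_1 = 1$, $\Psi_{-1} = -1$.

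The rest is bookkeeping. For $\Phi_n$ with $n \notin \{-1,0,1\}$ both summands are nonzero: $\ord[0](X\Psi_n^2) = -2 + 2(1-n^2) = -2n^2$ with $\Lambda = n^2$, and $\ord[0](\Psi_{n-1}\Psi_{n+1}) = (1-(n-1)^2) + (1-(n+1)^2) = -2n^2$ with $\Lambda = (n-1)(n+1) = n^2 - 1$; since $n^2 - (n^2-1) = 1 \ne 0$ the leading terms do not cancel, so $\ord[0]\Phi_n = -2n^2$ and $\Lambda\Phi_n = 1$. For $n = \pm 1$ the factor $\Psi_0 = 0$ appears and $\Psi_n = \pm 1$, so $\Phi_n = X$, giving $\ord[0]\Phi_n = -2 = -2n^2$ and $\Lambda\Phi_n = \Lambda X = 1$; for $n = 0$ one has $\Phi_0 = -\Psi_{-1}\Psi_1 = 1$, so $\ord[0]\Phi_0 = 0$, $\Lambda\Phi_0 = 1$. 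For $\Omega_n$, the case $n = 0$ is the definition $\Omega_0 = 1$; for $n \ne 0$, since $a_1, a_3 \in K - 0$ we have $\ord[0](a_1\Phi_n) = -2n^2 < 2 - 2n^2 = \ord[0](a_3\Psi_n^2)$, so $a_1\Phi_n + a_3\Psi_n^2$ has order $-2n^2$ and leading coefficient $a_1$, hence $\Psi_n^2(a_1\Phi_n + a_3\Psi_n^2)$ has order $2 - 4n^2$, strictly larger than $\ord[0]\Psi_{2n} = 1 - 4n^2$; therefore the bracket has order $1 - 4n^2$ and leading coefficient $\Lambda\Psi_{2n} = 2n$, and dividing by $2\Psi_n$ (order $1 - n^2$, leading coefficient $2n$) gives $\ord[0]\Omega_n = (n^2-1) + (1-4n^2) = -3n^2$ and $\Lambda\Omega_n = 2n/(2n) = 1$.

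I expect the only genuine work to be in the second paragraph, namely extracting $\ord[0]X = -2$, $\ord[0]Y = -3$ and $\Lambda X = \Lambda Y = 1$ from $\uni W'$; everything after that is formal manipulation with the valuation and the leading-coefficient map. The single point requiring care is cancellation of leading terms, which is precisely why the cases $n \in \{-1,0,1\}$ for $\Phi_n$ and the strict inequality $1 - 4n^2 < 2 - 4n^2$ for $\Omega_n$ must be treated separately.
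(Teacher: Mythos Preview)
Your proposal is correct and follows exactly the approach the paper indicates: the paper's proof is the single sentence ``This is a straightforward calculation, using that $\Lambda\Psi_n = n$ and $\ord[0]\Psi_n = -(n^2-1)$,'' and you have carried out precisely that calculation, including the auxiliary facts $\ord[0]X=-2$, $\ord[0]Y=-3$, $\Lambda X=\Lambda Y=1$ and the necessary case distinctions for small~$|n|$. There is nothing to add.
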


  \begin{proof}
    This is a straightforward calculation, using that $\Lambda \Psi_n = n$ and $\ord[0] \Psi_n = -(n^2 - 1)$.
  \end{proof}

  Now we can state the basic result mentioned in the introduction more precisely, in the case of our special smooth Weierstrass curve $E/K$.
  \begin{prop}[{\cite[Prop.\ 3.55]{enge}}]\label{const_basic}
    Let $P = (x:y:1) \in E(K)$ be a rational point, and let $n \in \bb Z$.
    Then 
    \begin{align*}
      nP &= \pth*[big]{\Psi_n(x,y)\Phi_n(x,y):\Omega_n(x,y):\Psi_n^3(x,y)}.
    \end{align*}
  \end{prop}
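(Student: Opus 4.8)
The plan is to verify that the right-hand side is a well-defined point of $\mathbb P^2(K)$ lying on $E$, that it lies on $C^{\mathrm{sm}}$, and that it equals $nP$ by comparing divisors on $E$. First I would dispose of the degenerate cases: if $n = 0$ then $\Psi_0 = 0$, $\Phi_0 = X$, $\Omega_0 = 1$, and the triple is $(0 : 1 : 0) = 0 \cdot P$; if $P = 0$ then $z = 0$ and one checks directly from $\operatorname{ord}_0 \Psi_n$, $\operatorname{ord}_0 \Phi_n$, $\operatorname{ord}_0 \Omega_n$ that, after clearing the pole at $0$, the triple again becomes $(0 : 1 : 0)$. Likewise, if $nP = 0$ (equivalently $P \in E[n]$), then $\Psi_n(x,y) = 0$ by the defining divisor of $\Psi_n$ and $\Phi_n(x,y) \neq 0$ (since $\operatorname{div}\Phi_n$ is supported on $E[2n] - E[n]$), so the triple is $(\Phi_n(x,y) : 0 : 0) = (1:0:0)$; but wait, that is not $(0:1:0)$, so here I would instead argue that $\Psi_n(x,y) = 0$ forces a cancellation, using that on the affine chart $z \neq 0$ we may instead work with the coordinates $(\Phi_n/\Psi_n^2 : \Omega_n/\Psi_n^3 : 1)$ and the fact that $\operatorname{ord}_P \Omega_n \geq \tfrac 32 \operatorname{ord}_P \Psi_{2n}$ near such $P$; the cleaner route is to compute in the local ring at $P$ and reduce to the generic point.

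For the main case — $P \neq 0$, $P \notin E[n]$, so that $\Psi_n(x,y) \in K^\times$ — I would proceed as follows. Set $\xi = \Phi_n/\Psi_n^2$ and $\eta = \Omega_n/\Psi_n^3$ as elements of $K(E)$; then the claimed triple is $(\xi(P) : \eta(P) : 1)$, so it suffices to show that the morphism $E \to E$ given in affine coordinates by $Q \mapsto (\xi(Q), \eta(Q))$ — wherever defined — agrees with multiplication by $n$. Both are morphisms of curves over $K$, so it is enough to check equality at a single point, or equivalently to identify the pullback of the functions $X$ and $Y$ under multiplication by $[n]$. I would use the divisor characterisation: $[n]^* X$ has divisor $\sum_{[n]Q = R_0}(Q) - \sum_{[n]Q = 0}(Q)$ for $R_0$ the point(s) with $X = 0$, and one computes that $\xi = \Phi_n/\Psi_n^2$ has exactly this divisor up to the known zeros and poles of $\Psi_n$ and $\Phi_n$; the leading-coefficient normalisations $\Lambda \Psi_n = n$, $\Lambda \Phi_n = \Lambda \Omega_n = 1$ and the order computations $\operatorname{ord}_0 \Phi_n = -2n^2$, $\operatorname{ord}_0 \Omega_n = -3n^2$ pin down the scalars so that $[n]^* X = \xi$ and $[n]^* Y = \eta$ exactly, not just up to a constant. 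Equivalently, this is precisely the content of the classical statement cited as \cite[Prop.\ 3.55]{enge}, so the proof may simply invoke that reference after translating between affine and projective coordinates.

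The main obstacle I anticipate is the bookkeeping at points $P$ with $\Psi_n(x,y) = 0$ (including $P \in E[2n] - E[n]$, where one must see the point $(1:0:0)$ is wrong and track the correct cancellation) and at $P = 0$: in projective coordinates the three polynomials $\Psi_n \Phi_n$, $\Omega_n$, $\Psi_n^3$ may simultaneously vanish at such $P$, and one has to homogenise and divide out the common factor to recover an honest point of $\mathbb P^2$. The clean way to handle this uniformly is to observe that the identity $nP = (\Psi_n\Phi_n : \Omega_n : \Psi_n^3)(P)$ is a closed condition and that $E(K)$ is Zariski-dense (indeed $K$ is algebraically closed), so it suffices to prove it on the dense open where $\Psi_n(x,y) \neq 0$ and $P \neq 0$, which is exactly the main case treated above; the boundary cases then follow by continuity of the morphism $[n]$. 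This density argument, together with the divisor computation, is the heart of the proof, and everything else is routine verification of orders and leading coefficients.
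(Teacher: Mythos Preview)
The paper gives no proof of this proposition: it is simply cited from \cite[Prop.~3.55]{enge}. Your own closing remark recognises this, so in that sense you and the paper agree.

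Your independent sketch is the standard textbook route and is essentially sound, but two slips are worth flagging. First, the hypothesis $P=(x:y:1)$ already forces $P\neq 0$, so that case is moot. Second, in the case $P\in E[n]-\{0\}$ you misread the displayed triple: it is $(\Psi_n\Phi_n:\Omega_n:\Psi_n^3)$, so $\Psi_n(P)=0$ kills the first and third coordinates, not the second, and the triple becomes $(0:\Omega_n(P):0)$ rather than $(\Phi_n(P):0:0)$. The point that actually needs checking is $\Omega_n(P)\neq 0$, and your density argument does \emph{not} supply this on its own: the equality $nP=(\Psi_n\Phi_n:\Omega_n:\Psi_n^3)(P)$ is a well-posed closed condition only on the locus where the triple is nonzero, so extending by continuity from the open set $\{\Psi_n\neq 0\}$ says nothing at a point where all three might vanish. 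One must verify directly that they do not: from $2\Psi_n\Omega_n=\Psi_{2n}-\Psi_n^2(a_1\Phi_n+a_3\Psi_n^2)$ together with the fact that both $\Psi_n$ and $\Psi_{2n}$ have simple zeros at $P$ while $\Psi_n^2$ has a double zero there, one reads off $\operatorname{ord}_P\Omega_n=0$, i.e.\ $\Omega_n(P)\neq 0$, and the triple is $(0:1:0)=nP$ as required.
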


  We want to obtain a version of this result that works for all rational points, including the point at infinity.
  We will do this by choosing representatives in $\uni A[X,Y]$ of the elements $\Psi_n\Phi_n$, $\Omega_n$, $\Psi_n^3$ of $\uni A[X,Y]/(\uni W')$ and then homogenising them.

  However, one has to be careful with the choice of representatives.
  As an example of this, the usual representatives chosen for the $\Psi_n$ have $Y$-degree at most $1$, which is convenient for calculations.
  But this also causes homogenisations to be divisible by high powers of $z$ for large $n$, as the leading term with respect to $y$ in the homogeneous Weierstrass equation $\uni W$ is $y^2z$.
  This causes problems at the point at infinity for $\abs n \geq 2$.

  Note that $\uni W$ is monic with respect to $x$, which motivates the following definition.
  Recall that $X = x/z$ and $Y = y/z$.

  \begin{defn}\label{const_newdivpol}
    Let $A_n, B_n, C_n \in \uni A[X,Y]$ be the unique representatives of $\Psi_n\Phi_n$, $\Omega_n$, $\Psi_n^3$ with $X$-degree at most $2$, and define the polynomials $\alpha_n = A_n z^{n^2}$, $\beta_n = B_n z^{n^2}$, $\gamma_n = C_n z^{n^2}$ in $\uni A[x,y,z,z^{-1}]$.
  \end{defn}
  Uniqueness of $A_n$, $B_n$, $C_n$ is guaranteed by the fact that $\uni W'$ is monic in $X$.

  \begin{prop}\label{const_integral}
    Let $n \in \bb Z$.
    Then $\alpha_n, \beta_n, \gamma_n \in \uni A[x,y,z]$ and are homogeneous of degree $n^2$.
    Moreover, $\alpha_n, \gamma_n \in (x,z)$ and $\beta_n \in y^{n^2} + (x,z)$.
  \end{prop}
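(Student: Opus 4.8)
The plan is to analyse the representatives $A_n, B_n, C_n$ directly from the definitions of $\Psi_n\Phi_n$, $\Omega_n$, $\Psi_n^3$ as elements of $\uni A[X,Y]/(\uni W')$. The integrality statement $\alpha_n,\beta_n,\gamma_n \in \uni A[x,y,z]$ is essentially the assertion that homogenising a polynomial of $X$-degree at most $2$ and multiplying by $z^{n^2}$ clears all denominators; this will follow from a bookkeeping argument about $Y$-degrees. Write $A_n = \sum_{j} a_{ij}X^iY^j$ with $i \le 2$; then its homogenisation is $z^{?}$-free precisely when the total degree of each monomial $X^iY^j$ is at most $n^2$, since each such monomial contributes a factor $z^{n^2 - i - j}$. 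So the first key step is: show that $A_n$, $B_n$, $C_n$, as polynomials in $\uni A[X,Y]$ with $X$-degree at most $2$, have total degree at most $n^2$.

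To control the total degree, I would first pin down the $Y$-degrees after reduction modulo $\uni W'$. From $\uni W' = Y^2 + a_1XY + a_3Y - X^3 - a_2X^2 - a_4X - a_6$, reduction mod $\uni W'$ replaces $Y^2$ by a polynomial of $Y$-degree at most $1$ and $X$-degree $3$. The cleanest route is to use the order and leading-coefficient data from the preceding proposition: $\ord[0]\Psi_n = -(n^2-1)$, $\ord[0]\Phi_n = -2n^2$, $\ord[0]\Omega_n = -3n^2$, hence $\ord[0](\Psi_n\Phi_n) = -(3n^2-1)$, $\ord[0]\Psi_n^3 = -3(n^2-1)$, $\ord[0]\Omega_n = -3n^2$. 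Since $X$ has a double pole and $Y$ a triple pole at $0$, and since a monomial $X^iY^j$ with $i\le 2$ has pole order $2i+3j$ at $0$ (the two never interfere because $X^3 = Y^2 + \dots$ is the only relation and it has already been used to reduce $X$-degree), the pole order at $0$ of each such reduced representative equals $2i + 3j$ maximised over its monomials. Comparing with the computed $\ord[0]$ values gives, in each case, $2i + 3j \le 3n^2$, with the bound $i \le 2$; from this one reads off $i + j \le n^2$ in each case by a short arithmetic check (e.g. $2i+3j \le 3n^2$ and $i \le 2$ force $j \le n^2 - \lceil i/3 \rceil \cdot \text{(something)}$ — the point is $i+j \le n^2$). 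This is the step I expect to be the main obstacle: one has to be careful that the $\ord[0]$ of a reduced representative is genuinely the max of $2i+3j$ over its support, i.e.\ that no cancellation of leading terms at $0$ occurs, which requires knowing that the top-pole-order monomials are linearly independent as functions near $0$ — true because $X^iY^j$ for $i\le 2$ form a basis of $\uni A[X,Y]/(\uni W')$ over $\uni A$ and their pole orders $2i+3j$ are pairwise distinct.

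Homogeneity of degree $n^2$ is then immediate: $\alpha_n = A_n z^{n^2}$ where each monomial $X^iY^j = x^iy^jz^{-i-j}$, so $\alpha_n$ is a sum of monomials $x^iy^j z^{n^2 - i - j}$, all of total degree $n^2$; likewise for $\beta_n$, $\gamma_n$. Finally, for the congruences: the monomial $Y^{n^2}$ (or more precisely the top-degree behaviour) is governed by the leading coefficients $\Lambda\Psi_n = n$, $\Lambda\Phi_n = \Lambda\Omega_n = 1$. The term of $B_n$ contributing $y^{n^2}$ in $\beta_n$ is the $Y^{n^2}$ term, and its coefficient is $\Lambda\Omega_n = 1$ by the leading-coefficient proposition, whereas every other monomial $X^iY^j$ with $i\ge 1$ contributes a multiple of $x$ and any monomial with $i+j < n^2$ contributes a positive power of $z$; hence $\beta_n \in y^{n^2} + (x,z)$. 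For $\alpha_n = A_n z^{n^2}$ with $A_n$ representing $\Psi_n\Phi_n$ which has $\ord[0] = -(3n^2-1) < -3(n^2) + $ \dots, in fact $3n^2 - 1 < 3n^2$, so the pole order is strictly less than $3n^2$, forcing $2i+3j \le 3n^2 - 1$ on its support, hence no $Y^{n^2}$ term with $i=0$ can appear (that would need $3n^2 \le 3n^2-1$); so every monomial of $A_n$ either has $i \ge 1$ or $j < n^2$, giving $\alpha_n \in (x,z)$. The same pole-order strict inequality for $\Psi_n^3$ ($\ord[0] = -3(n^2-1) = -3n^2 + 3$) gives $\gamma_n \in (x,z)$ by the identical argument.
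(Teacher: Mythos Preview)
Your proposal is correct and follows essentially the same route as the paper: both arguments hinge on the observation that the monomials $X^iY^j$ with $i\le 2$ have pairwise distinct pole orders $2i+3j$ at infinity, so the pole orders $3n^2-1$, $3n^2$, $3n^2-3$ of $\Psi_n\Phi_n$, $\Omega_n$, $\Psi_n^3$ directly bound the support of $A_n$, $B_n$, $C_n$ and single out $Y^{n^2}$ as the unique top-order monomial for $B_n$. The one place you leave vague---the ``short arithmetic check'' that $2i+3j\le 3n^2$ and $i\le 2$ imply $i+j\le n^2$---is indeed short: $3(i+j)=i+(2i+3j)\le 2+3n^2$, hence $i+j\le n^2$.
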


  \begin{proof}
    To show that $\alpha_n, \beta_n, \gamma_n \in \uni A[x,y,z]$, it suffices to show that the polynomials $A_n$, $B_n$, $C_n$ have total degree at most $n^2$.
    Note that the monomials with $X$-degree at most $2$ all have distinct orders at infinity.
    As $A_n$, $B_n$, $C_n$ have orders $-3n^2 + 1$, $-3n^2$, $-3n^2 + 3$ at infinity, respectively, and as they have $X$-degree at most $2$, it follows that they have total degree at most $n^2$.
    (Here, we use that $X$ and $Y$ have poles of orders $2$ and $3$ at the point at infinity, respectively.)

    Now $\alpha_n, \beta_n, \gamma_n$ are by construction homogeneous of degree $n^2$, and as $Y^{n^2}$ is the unique monomial with order $-3n^2$, the result follows.
  \end{proof}

  For our special smooth Weierstrass curve $E$, the problem at the point at infinity is now solved.

  \begin{prop}\label{const_main_special}
    Let $n \in \bb Z$, and let $P = (x:y:z) \in E(K)$.
    Then
    \[
      nP = \pth*[big]{\alpha_n(P):\beta_n(P):\gamma_n(P)}.
    \]
  \end{prop}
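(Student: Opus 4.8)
The plan is to split $E(K)$ into its affine points and the single point at infinity, and to treat these two cases separately. First I would observe that the only point of $E(K)$ with last coordinate zero is $(0:1:0)$: substituting $z = 0$ into $\uni W$ leaves $-x^3$, which forces $x = 0$ since $K$ is a field. So every $P \in E(K)$ is either of the form $(x:y:1)$ or equal to $(0:1:0)$.

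For an affine point $P = (x:y:1)$, the idea is to reduce to \autoref{const_basic}. By \autoref{const_newdivpol}, the polynomials $A_n$, $B_n$, $C_n \in \uni A[X,Y]$ represent the classes $\Psi_n\Phi_n$, $\Omega_n$, $\Psi_n^3$ in $\uni A[X,Y]/(\uni W')$; since $(x,y)$ satisfies $\uni W'(x,y) = 0$, the values $A_n(x,y)$, $B_n(x,y)$, $C_n(x,y)$ therefore agree with the quantities $\Psi_n(x,y)\Phi_n(x,y)$, $\Omega_n(x,y)$, $\Psi_n^3(x,y)$ occurring in \autoref{const_basic} (any two representatives differ by a multiple of $\uni W'$). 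On the other hand, again by \autoref{const_newdivpol} we have $\alpha_n = A_n z^{n^2}$ with $X = x/z$ and $Y = y/z$ (and likewise for $\beta_n$, $\gamma_n$), so specialising to $z = 1$ gives $\alpha_n(x,y,1) = A_n(x,y)$, $\beta_n(x,y,1) = B_n(x,y)$, $\gamma_n(x,y,1) = C_n(x,y)$. Combining these with \autoref{const_basic} yields
\[
  nP = \pth*[big]{\Psi_n(x,y)\Phi_n(x,y) : \Omega_n(x,y) : \Psi_n^3(x,y)} = \pth*[big]{\alpha_n(P):\beta_n(P):\gamma_n(P)};
\]
note that \autoref{const_basic} already guarantees these three coordinates do not all vanish, which in particular handles the case $P \in E[n]$, where the first and third vanish but the middle one does not.

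For $P = (0:1:0)$ we have $nP = 0 = (0:1:0)$, so it is enough to check that $(\alpha_n(0,1,0):\beta_n(0,1,0):\gamma_n(0,1,0)) = (0:1:0)$. This is immediate from \autoref{const_integral}: since $\alpha_n, \gamma_n \in (x,z)$ they vanish at $(0,1,0)$, while $\beta_n \in y^{n^2} + (x,z)$ gives $\beta_n(0,1,0) = 1 \neq 0$. As these two cases exhaust $E(K)$, this finishes the argument.

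I do not expect any real obstacle here: once \autoref{const_basic} and \autoref{const_integral} are in hand, the proof is essentially bookkeeping. The one point that deserves a moment's attention is that replacing the conventional division polynomials (chosen to have small $Y$-degree) by the representatives $A_n$, $B_n$, $C_n$ (chosen to have small $X$-degree) does not change the values at affine points of $E$, which holds precisely because the two choices differ by a multiple of $\uni W'$, and $\uni W'$ vanishes on the affine curve. The payoff of the small-$X$-degree choice, namely that the homogenisations behave correctly at the point at infinity, is exactly what \autoref{const_integral} packages for us.
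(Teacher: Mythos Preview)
Your proof is correct and follows essentially the same approach as the paper's own proof: split into affine points (handled via \autoref{const_basic} and the fact that $\alpha_n,\beta_n,\gamma_n$ are homogenisations of representatives of $\Psi_n\Phi_n$, $\Omega_n$, $\Psi_n^3$) and the point at infinity (handled via \autoref{const_integral}). You have simply spelled out in more detail what the paper's two-sentence proof leaves implicit.
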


  \begin{proof}
    As $\alpha_n$, $\beta_n$, $\gamma_n$ were obtained from $\Psi_n\Phi_n$, $\Omega_n$, $\Psi_n^3$ by homogenisation, the result follows from \autoref{const_basic} for $P$ of the form $(x:y:1)$.
    For $P = (0:1:0)$, the claim follows from \autoref{const_integral}.
  \end{proof}

  \subsection{Statement of the main theorem} \label{const_sect_main}

  Let $R$ be a ring.
  Given $W$ in $R[x,y,z]$ of the form (\ref{const_eqn_weierstrass}), there is an obvious ring morphism $\uni A \to R$ which maps $a_i \in \uni A$ to the corresponding coefficient of $W$ in $R$.
  We will view $R$ as an $\uni A$-algebra via this morphism.

  \begin{thm}\label{const_main}
    Let $R$ be a ring, and let $C$ be a Weierstrass curve over $R$ defined by $W$ as in (\ref{const_eqn_weierstrass}).
    Let $P = [(M,m_0,m_1,m_2)] \in C\sm(R)$. Then
    \[
      nP = \brc*[Big]{\pth*[big]{M^{\otimes n^2}, \alpha_n(m_0,m_1,m_2), \beta_n(m_0,m_1,m_2), \gamma_n(m_0,m_1,m_2)}}
    \]
  \end{thm}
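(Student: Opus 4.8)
The strategy is to reduce the theorem to the single assertion that, for the universal Weierstrass curve, the multiplication-by-$n$ morphism $[n]\colon \uni C\sm \to \uni C\sm \hookrightarrow \bb P^2_{\uni A}$ is given by the triple $(\alpha_n:\beta_n:\gamma_n)$, up to a global sign (which is harmless, since negating all three coordinates yields an isomorphic $4$-tuple). This reduction is formal: by \autoref{const_univ} any Weierstrass curve $C/R$ is the base change of $\uni C$ along the structure morphism $\uni A \to R$; the group scheme structure on $C\sm$, hence $[n]$, is pulled back from that of $\uni C\sm$ (\autoref{const_groupscheme_cor}); and by \autoref{const_functorofpoints} a point $P = [(M,m_0,m_1,m_2)] \in C\sm(R)$ is the same as an $\uni A$-morphism $\phi\colon \Spec R \to \uni C\sm$ with $\phi^*\sh O(1) \cong M$ sending $x,y,z$ to $m_0,m_1,m_2$. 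Pulling the universal assertion back along $\phi$, and using that $\phi^*$ sends $\sh O(n^2)$ to $M^{\otimes n^2}$ and $\alpha_n$ to $\alpha_n(m_0,m_1,m_2)$ (and likewise for $\beta_n$, $\gamma_n$), then gives the theorem.

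First I would prove the universal assertion over the dense open $\uni E = \uni C\sm \times_{\uni A} \bb Z[a_1,\dots,a_6,1/\Delta]$, which is an elliptic curve. On $\uni E$ the line bundle $\sh O(1)$ equals $\sh O(3\sigma)$, where $\sigma = (0{:}1{:}0)$ is the zero section (since $V(z)\cap\uni C = 3\sigma$), and $\sh O(3\sigma)$ is symmetric because inversion fixes $\sigma$; hence the Theorem of the Cube gives $[n]^*\sh O(1) \cong \sh O(1)^{\otimes n^2} = \sh O(n^2)$. As $\uni E$ is a hypersurface in $\bb P^2$ and $H^1(\bb P^2,\sh O(m)) = 0$, we get $H^0(\uni E,\sh O(n^2)) = (\uni R[x,y,z]/\uni W)_{n^2}$, so $[n]^*x$, $[n]^*y$, $[n]^*z$ are represented by degree-$n^2$ homogeneous polynomials $a_n, b_n, c_n$ (well defined modulo $\uni W$) which generate $\sh O_{\uni E}(n^2)$ and define $[n]$ on $\uni E$. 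Restricting to the geometric generic fibre $E/K$ of \autoref{const_sect_generic}, where $(\alpha_n:\beta_n:\gamma_n)$ also defines $[n]$ by \autoref{const_main_special}, yields the identities $a_n\beta_n = b_n\alpha_n$, $a_n\gamma_n = c_n\alpha_n$, $b_n\gamma_n = c_n\beta_n$ over $K$ and hence, since $\uni R[x,y,z]/(\uni W)$ is free over $\uni R$ and thus injects into $K[x,y,z]/(\uni W)$, over $\uni R$. As $\uni R[x,y,z]/(\uni W)$ is a normal domain and the common zero locus of $a_n,b_n,c_n$ on the affine cone over $\uni E$ has codimension $\geq 2$, a normality argument gives $(\alpha_n,\beta_n,\gamma_n) = \mu\cdot(a_n,b_n,c_n)$ for a unique $\mu \in \uni R$; evaluating the middle coordinate at $\sigma$ — where $b_n(\sigma)\in\uni R^\times$ because $[n]\circ\sigma = \sigma$, while $\beta_n$ has leading coefficient $1$ along $y^{n^2}$ by \autoref{const_integral} — forces $\mu\in\uni R^\times$. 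Hence $\alpha_n,\beta_n,\gamma_n$ also generate $\sh O_{\uni E}(n^2)$ and define $[n]$ on $\uni E$.

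Next I would extend this from $\uni E$ to all of $\uni C\sm$, i.e.\ across the fibres over $\{\Delta = 0\}$. Since $\uni C$ is an integral ($\uni W$ being irreducible over $\bb Z[a_1,\dots,a_6]$) normal hypersurface in $\bb P^2_{\uni A}$ whose singular locus has codimension $\geq 2$, we still have $H^0(\uni C\sm,\sh O(n^2)) = (\uni A[x,y,z]/\uni W)_{n^2}$; moreover $\uni C\sm$ is regular (being smooth over the regular ring $\uni A$), and $\uni C\sm \setminus \uni E$ is the divisor cut out by $\Delta\in\uni A$, a sum of prime divisors each of which is principal, so restriction $\operatorname{Pic}(\uni C\sm)\to\operatorname{Pic}(\uni E)$ is injective. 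Therefore $[n]^*\sh O(1) \cong \sh O(n^2)$ already over $\uni C\sm$, so $[n]^*x$, $[n]^*y$, $[n]^*z$ are degree-$n^2$ polynomials over $\uni A$ that generate $\sh O_{\uni C\sm}(n^2)$, define $[n]$ on $\uni C\sm$, and restrict on the dense open $\uni E$ to $\pm(\alpha_n,\beta_n,\gamma_n)$; comparing leading coefficients along $y^{n^2}$ over $\uni A$ (whose unit group is $\{\pm1\}$) identifies them with $\pm(\alpha_n,\beta_n,\gamma_n)$ globally. This proves the universal assertion, hence the theorem.

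I expect the extension step to be the main obstacle: excluding common zeroes of $\alpha_n,\beta_n,\gamma_n$ on the nodal and cuspidal fibres of $\uni C$ is precisely what forces one to use the global geometry — normality of $\uni C$ with singular locus of codimension $\geq 2$, regularity of $\uni C\sm$, and the fact that $\uni C\sm\setminus\uni E$ is cut out by a single function. An alternative would be a specialization argument: such a point lies in the closure of $\uni E$, so one spreads it out over a discrete valuation ring over whose fraction field the smooth case applies; but one would still have to check that the three polynomials remain a generating triple at the closed point, which over a field comes down to analysing $[n]$ on the smooth locus of a singular Weierstrass cubic (a form of $\bb G_m$ or $\bb G_a$). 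By contrast, the reduction to the universal curve and the Theorem-of-the-Cube input are comparatively routine once the computations of $H^0$ are in hand.
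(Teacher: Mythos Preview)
Your proposal is correct and follows essentially the same route as the paper: reduce to the universal curve, use the Theorem of the Cube on $\uni E$ to get $[n]^*\sh O(1)\cong\sh O(n^2)$ and hence a triple $(a_n,b_n,c_n)$ of degree $n^2$, compare with $(\alpha_n,\beta_n,\gamma_n)$ via the generic fibre $E/K$ and pin down the proportionality factor by evaluating at the zero section, then extend across $\{\Delta=0\}$ using regularity of $\uni C\sm$ and principality of $V(\Delta)$ to control $\operatorname{Pic}(\uni C\sm)\to\operatorname{Pic}(\uni E)$. The only cosmetic difference is in the comparison step over $\uni E$: where you invoke normality of the affine cone and an algebraic Hartogs argument to force the scalar $\mu$ into $\uni R$, the paper argues more directly that $\theta_n=\beta_n/\beta'_n$ is a pole-free rational function on $E/K$, hence a constant in $K$, then uses the free $\uni R$-module structure of $(\uni R[x,y,z]/\uni W)_{n^2}$ together with the UFD property of $\uni R$ to place $\theta_n$ in $\uni R$; both arguments are short and yield the same conclusion.
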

  In Section 2, we will state and prove the scheme-theoretic equivalent of this theorem.

  If $M = R$, then $M^{\otimes n^2} = R$, so if a point $P$ is of the form $(x:y:z)$, then $nP$ is also of this form, namely,
  \[
    nP = \pth*[big]{\alpha_n(x,y,z):\beta_n(x,y,z):\gamma_n(x,y,z)}
  \]
  However, even for a smooth Weierstrass curve $C$, the subset of $R$-valued points of the form $(x:y:z)$ need not be closed under addition, as we will see in \autoref{const_closed}.

  As $\alpha_n$, $\beta_n$, $\gamma_n$ were obtained from $\Phi_n\Psi_n$, $\Omega_n$, $\Psi_n^3$ by homogenisation, we then have the following immediate consequence of \autoref{const_main}.

  \begin{cor}\label{const_main_cor}
    Let $n \in \bb Z$, and let $C$ be a Weierstrass curve over a ring $R$.
    Then for all $P \in C\sm(R)$ of the form $(x:y:1)$, we have
    \[
      nP = \pth*[big]{\Phi_n(x,y)\Psi_n(x,y):\Omega_n(x,y):\Psi^3_n(x,y)}.
    \]
  \end{cor}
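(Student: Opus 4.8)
The plan is to obtain this as a direct specialization of \autoref{const_main} to the trivial invertible module, followed by a translation of the polynomials $\alpha_n,\beta_n,\gamma_n$ back into the rational functions from which they were built. Concretely, a point $P \in C\sm(R)$ of the form $(x:y:1)$ is by definition the class of the $4$-tuple $(R,x,y,1)$, and since it lies on $C$ we have $W(x,y,1) = 0$ in $R$; writing $W'$ for the image of $\uni W'$ under $\uni A \to R$, this says exactly $W'(x,y) = 0$. Applying \autoref{const_main} with $M = R$ and $(m_0,m_1,m_2) = (x,y,1)$, and using $M^{\otimes n^2} = R$, gives
\[
  nP = \pth*[big]{\alpha_n(x,y,1):\beta_n(x,y,1):\gamma_n(x,y,1)},
\]
where $\alpha_n,\beta_n,\gamma_n \in \uni A[x,y,z]$ (see \autoref{const_integral}) are regarded as elements of $R[x,y,z]$ via $\uni A \to R$.

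It then remains to identify the three coordinates. By \autoref{const_newdivpol} the identities $\alpha_n = A_n z^{n^2}$, $\beta_n = B_n z^{n^2}$, $\gamma_n = C_n z^{n^2}$ hold in $\uni A[x,y,z,z^{-1}]$, where $A_n,B_n,C_n \in \uni A[X,Y]$ are the chosen representatives of $\Psi_n\Phi_n$, $\Omega_n$, $\Psi_n^3$ in $\uni A[X,Y]/(\uni W')$. Applying the homomorphism $z \mapsto 1$ gives $\alpha_n(x,y,1) = A_n(x,y)$, $\beta_n(x,y,1) = B_n(x,y)$, $\gamma_n(x,y,1) = C_n(x,y)$ in $\uni A[x,y]$, hence also after base change to $R$ and evaluation at our point $(x,y)$. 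Since $W'(x,y) = 0$ in $R$, this evaluation factors through $R[x,y]/(W')$, so $A_n(x,y) = \Psi_n(x,y)\Phi_n(x,y)$, $B_n(x,y) = \Omega_n(x,y)$ and $C_n(x,y) = \Psi_n^3(x,y)$ in $R$, where the right-hand sides are by definition the values at $(x,y)$ of the images of $\Psi_n\Phi_n$, $\Omega_n$, $\Psi_n^3$ under $\uni A[X,Y]/(\uni W') \to R[x,y]/(W')$. Substituting into the displayed formula for $nP$ proves the claim.

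There is no genuine obstacle once \autoref{const_main} is available; the only point that deserves care is the interpretation of the symbols $\Psi_n(x,y)$, $\Phi_n(x,y)$, $\Omega_n(x,y)$ as honest elements of $R$. This is legitimate precisely because $P$ has the form $(x:y:1)$ and therefore $W'(x,y) = 0$, so that evaluation at $(x,y)$ of a class modulo $\uni W'$ (base-changed to $R$) is well-defined; it is also where the hypothesis $P \in C\sm(R)$ rather than just $P \in C(R)$ enters, via \autoref{const_main}.
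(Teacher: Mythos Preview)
Your proposal is correct and follows exactly the approach the paper indicates: the paper states this corollary as an ``immediate consequence of \autoref{const_main}'' using that $\alpha_n,\beta_n,\gamma_n$ are homogenisations of $\Phi_n\Psi_n$, $\Omega_n$, $\Psi_n^3$, and you have simply spelled out this deduction in detail. The only thing you make explicit that the paper leaves implicit is the well-definedness of the expressions $\Psi_n(x,y)$, $\Phi_n(x,y)$, $\Omega_n(x,y)$ as elements of $R$, via the vanishing $W'(x,y)=0$; this is a sound clarification.
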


  \begin{ex}\label{const_closed}
    We give an example of a Weierstrass curve $C$ over a ring $R$ and two $R$-valued points $P, Q \in C(R)$ of the form $(x:y:z)$ such that its sum $P+Q \in C(R)$ is not of the form $(x:y:z)$.
    Let $R = \bb Z\brc*[big]{\sqrt{-5}}$, let $K = \bb Q\pth*[big]{\sqrt{-5}}$, and consider the smooth Weierstrass curve $C$ over $R$ given by 
    \[
      W = y^2z + xyz + yz^2 - x^3 - 4xz^2 + 6z^3,
    \]
    and the two points
    \begin{align*}
      P &= (9 : 23 : 1) & Q &= \bigl(3411\sqrt{-5} : 26488 + 117\sqrt{-5} : -3645\sqrt{-5}\bigr)
    \end{align*}
    in $C(K)$.
    Note that $P$ and $Q$ define points of the form $(x:y:z)$ in $C(R)$ as well, as the $R$-submodule of $K$ generated by their coordinates are trivial, but that the $R$-submodule of $K$ generated by the coordinates of
    \[
      P + Q = \bigl(61028487 + 104922279\sqrt{-5} : 120011054 - 171672039\sqrt{-5} : -127263527\bigr) \in C(K)
    \]
    is invertible, but not principal, so $P+Q$ is not of the form $(x:y:z)$ in $C(R)$.
  \end{ex}


\section{Proof of the main theorem}

Here is the scheme-theoretic equivalent of \autoref{const_main}.

\begin{thm}\label{univ_main}
  Let $S$ be a scheme, and let $C$ be a Weierstrass curve over $S$ defined by $W$ as in (\ref{const_eqn_weierstrass}).
  Let $T$ be an $S$-scheme, and let $P = [(\sh L,s_0,s_1,s_2)] \in C\sm(T)$.
  Then
  \[
    nP = \brc*[Big]{\pth*[big]{\sh L^{\otimes n^2}, \alpha_n(s_0,s_1,s_2), \beta_n(s_0,s_1,s_2), \gamma_n(s_0,s_1,s_2)}}.
  \]
\end{thm}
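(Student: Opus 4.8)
\emph{The plan.} The plan is to reduce the statement to a single identity for the universal Weierstrass curve at its tautological point, establish that identity over the good--reduction locus using the Theorem of the Cube together with \autoref{const_main_special}, and finally extend it over the fibres of bad reduction. For the reduction: by \autoref{const_univ}, $C$ is the base change of $\uni C$ along the morphism $\Spec R\to\uni S$ classifying $W$, and the formation of the smooth locus, of the group structure of \autoref{const_groupscheme_cor}, and of $\alpha_n(s_0,s_1,s_2)$, $\beta_n(s_0,s_1,s_2)$, $\gamma_n(s_0,s_1,s_2)$ all commute with base change; moreover every $P=[(\sh L,s_0,s_1,s_2)]\in C\sm(T)$ is the pullback, along the induced morphism $T\to\uni C\sm$, of the tautological point $\tau=[(\mathcal L_0,x,y,z)]=\mathrm{id}_{\uni C\sm}$, where $\mathcal L_0=\sh O_{\bb P^2}(1)|_{\uni C}$. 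So it suffices to prove
\[
  n\tau=\bigl[(\mathcal L_0^{\otimes n^2},\alpha_n(x,y,z),\beta_n(x,y,z),\gamma_n(x,y,z))\bigr]\quad\text{in }\uni C\sm(\uni C\sm).
\]
Writing $n\tau=[(\mathcal M,t_0,t_1,t_2)]$, one has $\mathcal M=[n]^*\mathcal L_0$ with $t_0,t_1,t_2$ the $[n]$--pullbacks of $x,y,z$, so the goal becomes an isomorphism $\mathcal M\xrightarrow{\sim}\mathcal L_0^{\otimes n^2}$ carrying $t_i$ to $\alpha_n(x,y,z),\beta_n(x,y,z),\gamma_n(x,y,z)$.

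\emph{Good reduction.} Set $\uni R=\bb Z[a_1,\dots,a_6,1/\Delta]$ and $\uni E=\uni C\fp[\uni A]\Spec\uni R$, the universal elliptic curve; it is an abelian scheme over $\uni R$ and the open subscheme of $\uni C\sm$ complementary to the closed subscheme $D$ cut out by $\Delta$. On $\uni E$, $\mathcal L_0|_{\uni E}\cong\sh O_{\uni E}(3\cdot(0:1:0))$ is symmetric (as $[-1]$ fixes $(0:1:0)$), so the Theorem of the Cube --- together with $\mathrm{Pic}\,\uni R=0$ ($\uni R$ is a localisation of a polynomial ring over $\bb Z$), which removes the base--change twist --- yields $[n]^*\mathcal L_0|_{\uni E}\cong\mathcal L_0^{\otimes n^2}|_{\uni E}$. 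Since $H^1(\bb P^2_{\uni R},\sh O(n^2-3))=0$, the morphism $\uni E\xrightarrow{[n]}\uni E\hookrightarrow\bb P^2_{\uni R}$ is given by $(a_n:b_n:c_n)$ for homogeneous $a_n,b_n,c_n\in\uni R[x,y,z]$ of degree $n^2$, which I would normalise to have $x$--degree at most $2$ and so that the $y^{n^2}$--coefficient of $b_n$ is $1$ (this amounts to normalising the cube isomorphism along $(0:1:0)$). Base--changing to $K=\overline{\mathrm{Frac}\,\uni A}$: since $E$ is reduced, $K$ algebraically closed, and the two agree on $K$--points by \autoref{const_main_special}, the triples $(a_n:b_n:c_n)$ and $(\alpha_n:\beta_n:\gamma_n)$ define the same morphism $E\to\bb P^2_K$. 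Hence $\alpha_n-u\,a_n\in(W)$ for some $u\in K^\times$, forcing $\alpha_n=u\,a_n$ since it has $x$--degree $<3$, and likewise $\beta_n=u\,b_n$, $\gamma_n=u\,c_n$; comparing $y^{n^2}$--coefficients ($\beta_n$ has coefficient $1$ by \autoref{const_integral}) gives $u=1$. So $\alpha_n=a_n$, $\beta_n=b_n$, $\gamma_n=c_n$ already in $\uni R[x,y,z]$: the desired identity holds after restriction to $\uni E$, and $\alpha_n,\beta_n,\gamma_n$ have no common zero on $\uni E$.

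\emph{Bad reduction; the main obstacle.} Now $\uni C\sm$ is regular (smooth over the regular ring $\uni A$) and integral, and $D$ is a divisor whose prime components $D_k$ lie one over each component of $V(\Delta)\subseteq\uni S$. I would extend the isomorphism $\mathcal M|_{\uni E}\xrightarrow{\sim}\mathcal L_0^{\otimes n^2}|_{\uni E}$ --- which carries $t_i$ to $\alpha_n(x,y,z),\beta_n(x,y,z),\gamma_n(x,y,z)$ --- to a morphism $\phi\colon\mathcal M\to\mathcal L_0^{\otimes n^2}$ over all of $\uni C\sm$: at the generic point $\eta_k$ of $D_k$ the order of $\phi$ equals the order there of whichever of $t_0,t_1,t_2$ generates $\mathcal M$, i.e.\ of one of the honest global sections $\alpha_n(x,y,z),\beta_n(x,y,z),\gamma_n(x,y,z)$, hence is $\ge0$. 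Then $\phi$ is an isomorphism: its degeneracy locus is an effective divisor supported on $D$, but $\beta_n(x,y,z)$ does not vanish identically on any $D_k$ --- reducing modulo the prime of $\uni A$ defining the component of $V(\Delta)$ below $D_k$ it still has $y^{n^2}$--coefficient $1$ and $x$--degree $<3$, hence stays nonzero modulo $W$ --- so $\phi$ is nonvanishing at each $\eta_k$ and its degeneracy divisor is empty. This is the required isomorphism, which proves the theorem (and shows a posteriori that $\alpha_n,\beta_n,\gamma_n$ generate $\mathcal L_0^{\otimes n^2}$ over all of $\uni C\sm$). The hard part is exactly this last step: over a fibre of additive or multiplicative reduction $[n]$ is \emph{not} given by degree--$n^2$ data on any projective model, so $\alpha_n,\beta_n,\gamma_n$ necessarily share a common zero divisor there while never vanishing simultaneously, and the only leverage for controlling this is the regularity of $\uni C\sm$ and the normal form $\beta_n\equiv y^{n^2}\pmod{(x,z)}$ of \autoref{const_integral}. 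A lesser but genuine point of care is pinning $u=1$, i.e.\ choosing the cube isomorphism rigidly enough.
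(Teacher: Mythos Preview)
Your proof is correct and follows the paper's strategy exactly: reduce to the universal point on the universal curve, use the Theorem of the Cube over $\uni R$ to get degree-$n^2$ polynomials, match them to $\alpha_n,\beta_n,\gamma_n$ via \autoref{const_main_special} and the $y^{n^2}$-coefficient, then extend across $V(\Delta)$. The one substantive difference is in the extension step: the paper observes that $\Delta$ is irreducible and $V(\Delta)\subset\uni C\sm$ is a principal prime divisor, so $\Pic\uni C\sm\to\Pic\uni E$ is an isomorphism, then pins down the remaining unit $u\in\uni R^\times$ by pulling back along the zero section; you instead extend the bundle map $\phi$ directly by computing its order at each generic point of $D$ and kill the degeneracy divisor using $\beta_n\not\equiv0$ on $D_k$. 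Your version has the mild advantage of not invoking irreducibility of $\Delta$, while the paper's Picard-group argument is a bit cleaner to state; both ultimately hinge on the same two inputs, regularity of $\uni C\sm$ and the normalisation $\beta_n\equiv y^{n^2}\pmod{(x,z)}$ from \autoref{const_integral}.
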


We will first prove this for smooth Weierstrass curves, and we will do this by first using the Theorem of the Cube, by considering a {\em universal point} on a {\em universal smooth Weierstrass curve}.

\subsection{The universal smooth Weierstrass curve and the proof in the smooth case} \label{univ_sect_smooth}

\begin{defn}
  The {\em universal smooth Weierstrass curve} $\uni E$ is the Weierstrass curve over $\Spec \uni R$ (where $\uni R = \bb Z[a_1,a_2,a_3,a_4,a_6,1/\Delta]$) defined by the homogeneous polynomial
  \[
    \uni W = y^2z + a_1xyz + a_3yz^2 - x^3 - a_2x^2z - a_4xz^2 - a_6z^3 \in \uni R[x,y,z].
  \]
  The {\em universal point} $\uni P$ on $\uni E$ is the point in $\uni E(\uni E)$ corresponding to the identity map on $\uni E$.
\end{defn}

Note here that $\uni P = \brc*[big]{\pth*[big]{\sh O_{\uni E}(1),x,y,z}}$, and that it is universal in the sense that any morphism $S \to \uni E$ of $\uni R$-schemes factors through the identity on $\uni E$.
Moreover, $\uni E$ is universal in the following sense.

\begin{prop} \label{univ_univ}
  Let $E$ be a smooth Weierstrass curve over $S$. Then there exist unique morphisms $g \colon E \to \uni E$ and $h \colon S \to \Spec R$ such the following diagram is commutative, and such that the outer square is Cartesian.
  \[
    \begin{tikzcd}
      E \arrow{d}{g} \arrow{r} & \bb P^2_S \arrow{d}{h_{\bb P^2_{\uni R}}} \arrow{r} & S \arrow{d}{h} \\
      \uni E \arrow{r} & \bb P^2_{\uni R} \arrow{r} & \Spec \uni R
    \end{tikzcd}
  \]
\end{prop}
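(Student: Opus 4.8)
The plan is to mimic the proof of \autoref{const_univ} almost verbatim, the only difference being that one works over $\uni R = \bb Z[a_1,\dots,a_6,1/\Delta]$ rather than $\uni A$. As in that proof, it suffices to treat the case $S = \Spec A$ affine and then glue, since the construction is manifestly compatible with localisation on $S$. Since $E$ is a smooth Weierstrass curve over $A$, its discriminant $\Delta_E \in A$ is a unit by the criterion recorded just after \autoref{const_flatlfp}; hence the ring morphism $\uni A \to A$ sending $a_i$ to the $i$-th coefficient of $W$ sends $\Delta \in \uni A$ to a unit, and therefore factors (uniquely) through $\uni R = \uni A[1/\Delta]$. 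This gives the morphism $h \colon S \to \Spec \uni R$, and one checks that $E = \uni E \fp[\uni R] S$ via $h$, exactly because the defining polynomial $W$ of $E$ is the image of $\uni W$ under $\uni A \to A$. Thus $(h_{\uni E}, h)$ is a pair with the required properties.

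For uniqueness I would repeat the argument of \autoref{const_univ}: the map $\uni E \to \bb P^2_{\uni R}$ is a closed immersion, hence a monomorphism, so $g$ is forced once $h$ is fixed; and $h_{\bb P^2_{\uni R}}$ induces a morphism $E \to \uni E$ over it precisely when the induced map on degree-$3$ global sections of $\sh O(3)$, i.e. $\uni R[x,y,z]_3 \to A[x,y,z]_3$, carries $\uni W$ to $W$, which pins down the images of the $a_i$ and hence forces $h$ to be the morphism described above. This also uses that $\uni R \to A$ is determined by where it sends the $a_i$, which holds since $\uni R$ is a localisation of the polynomial ring $\uni A$ and the $a_i$ generate $\uni A$ as a ring.

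There is essentially no hard part here — the statement is the evident analogue of \autoref{const_univ} with $\uni A$ replaced by $\uni R$, and the only substantive new ingredient is the observation that smoothness of $E$ forces $\Delta \in \uni A$ to map to a unit in $A$, so that $\uni A \to A$ extends (uniquely) to $\uni R \to A$. The remaining points are the same formal facts about closed immersions, base change of projective space, and the presentation of $\bb P^2$'s homogeneous coordinate ring already invoked in \autoref{const_univ}. If I wanted to be maximally economical I would simply cite \autoref{const_univ} applied to $C = E$, obtaining a factorisation $S \to \Spec \uni A$, and then note it lands in the open subscheme $\Spec \uni R \subseteq \Spec \uni A$ because $\Delta$ pulls back to a unit; composing with the resulting $\uni E = \uni C \fp[\uni A] \Spec \uni R \hookrightarrow \uni C$ and chasing the Cartesian squares then yields both existence and uniqueness directly.
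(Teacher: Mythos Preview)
Your proposal is correct and is exactly the approach the paper takes: the paper's entire proof is the single sentence ``Mimic the proof of \autoref{const_univ}.'' You have spelled out that mimicry faithfully, including the one extra observation needed here---that smoothness of $E$ makes $\Delta$ map to a unit in $A$, so the ring map $\uni A \to A$ factors uniquely through $\uni R$.
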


\begin{proof}
  Mimic the proof of \autoref{const_univ}.
\end{proof}

We want to consider the multiples of $\uni P$.
Note that for $n \in \bb Z$, the point $n \uni P$ is the point corresponding to the multiplication-by-$n$ map $\mu_n$ on $\uni E$.
The following is a special case of the Theorem of the Cube.

\begin{thm}[{\cite[Thm.\ IV.3.3]{raynaud}}]\label{univ_cube}
  Let $n_1, n_2, n_3 \in \bb Z$.
  Then there is a canonical isomorphism of $\sh O_{\uni E}$-modules
  \[
    \bigotimes_{I \subs \set{1,2,3}} \mu_{\sum_{i \in I} n_i}^* \sh O_{\uni E}(1)^{(-1)^{\#I}} = \sh O_{\uni E}.
  \]
\end{thm}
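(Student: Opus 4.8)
The plan is to deduce this ``theorem of the cube on the elliptic curve $\uni E$'' from the general Theorem of the Cube, applied to the abelian scheme $\pi\colon \uni E \to \Spec \uni R$ and the invertible sheaf $\sh O_{\uni E}(1)$. The general statement (see \cite[Thm.~IV.3.3]{raynaud}) asserts the following: for an abelian scheme $X/S$ and an invertible $\sh O_X$-module $\sh M$, the sheaf $\Theta(\sh M) := \bigotimes_{\emptyset \neq I \subseteq \{1,2,3\}} m_I^* \sh M^{(-1)^{\#I}} \otimes \pi_I^* (\text{correction terms})$ on $X^3$ is canonically trivial, where $m_I\colon X^3 \to X$ is the sum of the coordinates indexed by $I$. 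What we need here is the pullback of this statement along the morphism $\uni E \to \uni E^3$ given by $Q \mapsto (n_1 Q, n_2 Q, n_3 Q)$ (equivalently, along $(\mu_{n_1},\mu_{n_2},\mu_{n_3})$), together with the observation that $m_I \circ (\mu_{n_1},\mu_{n_2},\mu_{n_3}) = \mu_{\sum_{i\in I} n_i}$, so that $m_I^*\sh M$ pulls back to $\mu_{\sum_{i\in I}n_i}^*\sh O_{\uni E}(1)$.

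Concretely, I would proceed in three steps. First, recall/cite that $\uni E = \uni C\sm$ over $\uni R = \bb Z[a_1,\dots,a_6,1/\Delta]$ is an abelian scheme of relative dimension $1$ over $\Spec \uni R$, with the commutative group scheme structure and zero section $0 = (0:1:0)$ supplied by \autoref{const_groupscheme_cor} (or directly \autoref{const_groupscheme}). Second, apply the cubical structure / Theorem of the Cube to $\sh M = \sh O_{\uni E}(1)$. Here one must be slightly careful: the cleanest formulation of the cube theorem produces a canonical trivialization of the ``primitive'' combination on $\uni E \times_{\uni R} \uni E \times_{\uni R} \uni E$ \emph{after including the sign-corrected pullbacks along all nonempty subsets $I$, including the singletons}; pulling back along the diagonal-type map $(\mu_{n_1},\mu_{n_2},\mu_{n_3})$ and then re-indexing over \emph{all} subsets $I \subseteq \{1,2,3\}$ (including $I = \emptyset$, which contributes $\mu_0^*\sh O_{\uni E}(1) = 0^*\sh O_{\uni E}(1)$, a trivial line bundle pulled back from the base, hence itself trivial on $\uni E$) gives exactly the displayed formula. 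Third, verify that the resulting isomorphism is canonical in $n_1,n_2,n_3$ — this is automatic since it is obtained by functorial pullback of the canonical cubical trivialization.

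The main obstacle is bookkeeping rather than mathematics: one has to pin down the precise normalization of the Theorem of the Cube being invoked (which correction/rigidification terms are absorbed, whether the statement is over $\uni E^3$ or after pullback, and how the $I = \emptyset$ term is handled so that the product really runs over all subsets of $\{1,2,3\}$ as written), and to check that the constant sheaves $0^*\sh O_{\uni E}(1)$ and $\mu_{\pm 1}^*\sh O_{\uni E}(1) = \sh O_{\uni E}(1)^{\pm 1}$ are accounted for correctly. Once the indexing is matched up, the statement is a direct specialization: I would not expect any geometric input beyond ``$\uni E/\uni R$ is an abelian scheme and $\sh O_{\uni E}(1)$ is a line bundle on it,'' both of which are already available in the excerpt.
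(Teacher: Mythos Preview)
The paper does not give a proof of this statement at all: it is stated as a special case of the Theorem of the Cube and simply attributed to \cite[Thm.~IV.3.3]{raynaud}. Your proposal is therefore not competing with any argument in the paper; rather, you are spelling out precisely \emph{why} the cited general theorem yields the displayed identity. That derivation---apply the cube theorem to the abelian scheme $\uni E/\Spec \uni R$ and the line bundle $\sh O_{\uni E}(1)$ on $\uni E \times_{\uni R} \uni E \times_{\uni R} \uni E$, then pull back along $(\mu_{n_1},\mu_{n_2},\mu_{n_3})\colon \uni E \to \uni E^3$ and use $m_I \circ (\mu_{n_1},\mu_{n_2},\mu_{n_3}) = \mu_{\sum_{i\in I} n_i}$---is correct, and your remarks about the $I=\emptyset$ term (noting that $\mu_0$ factors through the zero section, so $\mu_0^*\sh O_{\uni E}(1)$ is pulled back from the base and hence canonically trivial) are the right way to reconcile the indexing over all subsets with the usual formulation over nonempty subsets.
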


\begin{prop}\label{univ_multn}
  Let $n \in \bb Z$.
  Then $\mu_n^* \sh O_{\uni E}(1) \iso \sh O_{\uni E}(n^2)$.
\end{prop}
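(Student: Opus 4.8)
The plan is to apply the Theorem of the Cube (\autoref{univ_cube}) with the specific choice $n_1 = n$, $n_2 = 1$, $n_3 = -1$, which collapses the right-hand side and yields a relation expressing $\mu_n^*\sh O_{\uni E}(1)$ in terms of $\mu_{n+1}^*\sh O_{\uni E}(1)$, $\mu_{n-1}^*\sh O_{\uni E}(1)$, and $\mu_1^*\sh O_{\uni E}(1) = \sh O_{\uni E}(1)$ (and $\mu_0^*\sh O_{\uni E}(1) = \sh O_{\uni E}$, since $\mu_0$ is the constant map to the origin $(0:1:0)$, at which $\sh O_{\uni E}(1)$ is canonically trivialised by the section $y$). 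Spelling out the subsets $I \subseteq \{1,2,3\}$: the term for $I = \emptyset$ is $\sh O_{\uni E}$; for $I = \{1\}$ it is $\mu_n^*\sh O_{\uni E}(1)^{-1}$; for $\{2\}$ and $\{3\}$ it is $\sh O_{\uni E}(1)^{-1}$ each; for $\{1,2\}$ it is $\mu_{n+1}^*\sh O_{\uni E}(1)$, for $\{1,3\}$ it is $\mu_{n-1}^*\sh O_{\uni E}(1)$, for $\{2,3\}$ it is $\mu_0^*\sh O_{\uni E}(1) = \sh O_{\uni E}$; and for $\{1,2,3\}$ it is $\mu_n^*\sh O_{\uni E}(1)^{-1}$. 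Assembling these, the cube relation becomes
\[
  \mu_{n+1}^*\sh O_{\uni E}(1) \otimes \mu_{n-1}^*\sh O_{\uni E}(1) \iso \pth*[big]{\mu_n^*\sh O_{\uni E}(1)}^{\otimes 2} \otimes \sh O_{\uni E}(2).
\]

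Next I would set $\sh F_n = \mu_n^*\sh O_{\uni E}(1) \otimes \sh O_{\uni E}(-n^2)$ and rewrite the displayed relation in terms of $\sh F_n$; since $(n+1)^2 + (n-1)^2 = 2n^2 + 2$, all the explicit Serre-twist factors cancel and one obtains simply $\sh F_{n+1} \otimes \sh F_{n-1} \iso \sh F_n^{\otimes 2}$, i.e.\ $\sh F_{n+1} \otimes \sh F_n^{-1} \iso \sh F_n \otimes \sh F_{n-1}^{-1}$. So the class of $\sh F_{n+1} \otimes \sh F_n^{-1}$ in the Picard group is independent of $n$. Evaluating at two small values pins it down: $\sh F_0 = \mu_0^*\sh O_{\uni E}(1) = \sh O_{\uni E}$ (trivial, as noted above), and $\sh F_1 = \mu_1^*\sh O_{\uni E}(1) \otimes \sh O_{\uni E}(-1) = \sh O_{\uni E}(1)\otimes\sh O_{\uni E}(-1) = \sh O_{\uni E}$ since $\mu_1 = \id$. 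Hence $\sh F_1 \otimes \sh F_0^{-1} \iso \sh O_{\uni E}$, so $\sh F_{n+1}\otimes \sh F_n^{-1}\iso \sh O_{\uni E}$ for all $n$, and by induction (upward and downward from $\sh F_0 \iso \sh O_{\uni E}$) we get $\sh F_n \iso \sh O_{\uni E}$ for all $n \in \bb Z$, which is exactly $\mu_n^*\sh O_{\uni E}(1) \iso \sh O_{\uni E}(n^2)$.

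I expect the main point requiring care to be the bookkeeping in the first step: correctly identifying $\mu_0$ with the origin section composed with the structure map, checking that $\mu_0^*\sh O_{\uni E}(1)$ is trivial (which uses that the origin $(0:1:0)$ lies in the locus where $y$ is a unit, so $y$ trivialises $\sh O_{\uni E}(1)$ there), and making sure the exponents $(-1)^{\#I}$ are tracked with the right signs so that the two $\mu_n^*\sh O_{\uni E}(1)^{-1}$ contributions (from $I=\{1\}$ and $I=\{1,2,3\}$) combine with the isolated $\mu_n^*\sh O_{\uni E}(1)$ (hidden in no single term — in fact it does not appear with a $+1$ exponent, so one rearranges to put both inverse copies on one side). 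Everything after that is a formal manipulation in $\Pic(\uni E)$ together with a two-step induction, with no geometric input beyond the cube theorem. An alternative, should one prefer to avoid the $n=0$ and $n=1$ base cases, is to note that the relation $\sh F_{n+1}\otimes\sh F_{n-1}\iso\sh F_n^{\otimes 2}$ together with $\sh F_0\iso\sh O_{\uni E}$ and the fact that $\mu_{-1}$ is the inverse morphism (so $\mu_{-1}^*\sh O_{\uni E}(1)\iso\mu_1^*\sh O_{\uni E}(1)$ by the symmetry $[-1]^*$, as $\sh O_{\uni E}(1)$ differs from a symmetric sheaf by a translate that becomes trivial after the twist) also closes the argument, but the two explicit evaluations above are the cleanest route.
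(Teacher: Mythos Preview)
Your strategy --- apply the cube relation to produce a second-order recurrence for $\mu_n^*\sh O_{\uni E}(1)$ in $\Pic(\uni E)$ and then close it from small initial values --- is also the paper's approach (the paper uses the triples $(1,1,-1)$ for the base case and $(n-2,1,1)$ for the inductive step rather than your uniform $(n,1,-1)$, but the mechanism is the same). There is, however, a real gap in your bookkeeping.

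When you expand the cube relation with $(n_1,n_2,n_3)=(n,1,-1)$, the term for $I=\{3\}$ is $\mu_{-1}^*\sh O_{\uni E}(1)^{-1}$, not $\sh O_{\uni E}(1)^{-1}$; you have silently identified $\mu_{-1}^*\sh O_{\uni E}(1)$ with $\sh O_{\uni E}(1)$. Without that identification, the recurrence one actually obtains (after using your legitimate base cases $\sh F_0\iso\sh O_{\uni E}$ and $\sh F_1\iso\sh O_{\uni E}$) is
\[
  \sh F_{n+1}\otimes\sh F_{n-1}\iso\sh F_n^{\otimes 2}\otimes\sh F_{-1},
\]
and the induction then only yields $\sh F_n\iso\sh F_{-1}^{\otimes\binom{n}{2}}$, which never collapses. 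So $\mu_{-1}^*\sh O_{\uni E}(1)\iso\sh O_{\uni E}(1)$ is a genuine extra geometric input, not mere bookkeeping, and your aside that ``$\sh O_{\uni E}(1)$ differs from a symmetric sheaf by a translate that becomes trivial after the twist'' is not a proof of it. The paper supplies exactly this as its first step: it identifies $\sh O_{\uni E}(1)$ with $I^{-3}(0)$, and since $\mu_{-1}$ fixes the zero section one gets $\mu_{-1}^*\sh O_{\uni E}(1)\iso\sh O_{\uni E}(1)$ at once. With that line added, your argument is complete, and the packaging via $\sh F_n$ is a tidy way to organise the induction.
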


\begin{proof}
  As $\uni E$ is a smooth Weierstrass curve, we have $\sh O_{\uni E}(1) = I^{-3}(0)$;
  the identity on $\uni E$ can be given by $\brc*[big]{\pth*[big]{\sh O_{\uni E}(1),x,y,z}}$, as well as $\brc*[big]{\pth*[big]{I^{-3}(0),X,Y,1}}$, where the latter is in the notation of~\cite[Sect.~2.2]{katzmazur}.
  As $\mu_{-1}$ fixes the zero section, we deduce that $\mu_{-1}^* \sh O_{\uni E}(1) \iso \sh O_{\uni E}(1)$.
  Hence it suffices to show that for all positive $n \in \bb Z$, we have $\mu_n^* \sh O_{\uni E}(1) \iso \sh O_{\uni E}(n^2)$.

  For $n=2$, we apply \autoref{univ_cube} with $n_1 = n_2 = 1$, $n_3 = -1$ to see that $\mu_2^* \sh O_{\uni E}(1) \iso \sh O_{\uni E}(4)$.
  For $n>2$, our claim follows by induction, by applying \autoref{univ_cube} with $n_1 = n-2$, and $n_2 = n_3 = 1$.
\end{proof}

As a consequence, we have the following important observation.

\begin{cor}\label{univ_multn_cor}
  Let $n \in \bb Z$.
  Then there exist homogeneous elements $\alpha'_n$, $\beta'_n$, $\gamma'_n$ in $\uni R[x,y,z]/(\uni W)$ of degree $n^2$ such that 
  \[
    \brc*[big]{\pth*[big]{\sh O_{\uni E}(n^2),\alpha'_n,\beta'_n,\gamma'_n}} \in \uni E(\uni E)
  \]
  is the point corresponding to $\mu_n$.
  These elements are unique up to a common unit in $\uni R$.
\end{cor}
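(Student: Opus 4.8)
The plan is to translate the isomorphism $\mu_n^* \sh O_{\uni E}(1) \iso \sh O_{\uni E}(n^2)$ from \autoref{univ_multn} into concrete homogeneous coordinates for the point $n\uni P \in \uni E(\uni E)$, and then to extract the uniqueness statement from the structure of $\uni R$. By definition the point $n\uni P$ corresponds to the morphism $\mu_n \colon \uni E \to \uni E$, and hence by \autoref{const_functorofpoints} (in its scheme-theoretic form) it is represented by a $4$-tuple $(\mu_n^* \sh O_{\uni E}(1), \mu_n^* x, \mu_n^* y, \mu_n^* z)$, where $\mu_n^* x, \mu_n^* y, \mu_n^* z$ are the global sections of $\mu_n^* \sh O_{\uni E}(1)$ pulled back from the tautological sections $x, y, z$ of $\sh O_{\uni E}(1)$, and they generate $\mu_n^* \sh O_{\uni E}(1)$ because $x, y, z$ generate $\sh O_{\uni E}(1)$ and pullback of a surjection is a surjection. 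Now fix the isomorphism $\phi \colon \mu_n^* \sh O_{\uni E}(1) \xrightarrow{\sim} \sh O_{\uni E}(n^2)$ furnished by \autoref{univ_multn}, and set $\alpha'_n = \phi(\mu_n^* x)$, $\beta'_n = \phi(\mu_n^* y)$, $\gamma'_n = \phi(\mu_n^* z)$. These are global sections of $\sh O_{\uni E}(n^2)$, i.e.\ homogeneous elements of degree $n^2$ in $\uni R[x,y,z]/(\uni W)$, and since $\phi$ is an isomorphism they still generate $\sh O_{\uni E}(n^2)$. By \autoref{const_functorofpoints} the tuple $(\sh O_{\uni E}(n^2), \alpha'_n, \beta'_n, \gamma'_n)$ represents the same point of $\uni E(\uni E)$ as the original tuple, namely $n\uni P$, which is exactly the point corresponding to $\mu_n$.

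For uniqueness: suppose $(\sh O_{\uni E}(n^2), \alpha''_n, \beta''_n, \gamma''_n)$ is another such tuple representing $n\uni P$. By the equivalence relation in \autoref{const_functorofpoints} there is an automorphism $\psi$ of $\sh O_{\uni E}(n^2)$ sending the first triple to the second. An automorphism of the invertible sheaf $\sh O_{\uni E}(n^2)$ is multiplication by a global unit, i.e.\ an element of $\sh O_{\uni E}(\uni E)^{\times} = (\uni R[x,y,z]/(\uni W))_0^{\times}$. Since $\uni E$ is a smooth affine-over-$\Spec\uni R$ curve with geometrically integral fibres and $\uni R$ is an integral domain, $\uni E$ is integral and its global sections are $\uni R$ itself (the degree-$0$ part of $\uni R[x,y,z]/(\uni W)$), so the only global units are the units of $\uni R$. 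Hence $\alpha''_n = u\alpha'_n$, etc., for a common $u \in \uni R^{\times}$, which is the claimed uniqueness up to a common unit in $\uni R$.

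The step I expect to require the most care is the identification $\sh O_{\uni E}(\uni E) = \uni R$, equivalently $H^0(\uni E, \sh O_{\uni E}) = \uni R$, since this is what pins down the automorphism group of the structure sheaf and thus the ``up to a common unit'' clause. One clean way to see it is that $\uni E \to \Spec \uni R$ is proper, flat, with geometrically connected and reduced fibres (the fibres are smooth genus $1$ curves, hence geometrically integral), so by cohomology and base change the formation of $H^0$ commutes with base change and $H^0$ of each fibre is the base field; thus $H^0(\uni E, \sh O_{\uni E})$ is a finite locally free $\uni R$-module of rank $1$ containing $\uni R$, hence equals $\uni R$. Alternatively, one can invoke that a Weierstrass curve $\uni E$ is covered by the two standard affines on which $y$ or $z$ is invertible, on each of which the coordinate ring is $\uni R[X,Y]/(\uni W')$-type and its units are visibly those of $\uni R$ by the degree argument already used in \autoref{const_integral}; the global units then lie in the intersection, which is $\uni R^{\times}$. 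Everything else in the argument is formal bookkeeping with \autoref{const_functorofpoints} and \autoref{univ_multn}.
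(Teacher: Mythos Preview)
Your argument is correct and follows essentially the same route as the paper, which simply records that global sections of $\sh O_{\uni E}(n^2)$ are the degree-$n^2$ homogeneous elements of $\uni R[x,y,z]/(\uni W)$ and that $\Aut_{\sh O_{\uni E}}\bigl(\sh O_{\uni E}(n^2)\bigr) = \sh O_{\uni E}(\uni E)^{\times} = \uni R^{\times}$; you have merely unpacked these two facts in more detail. One small slip: $\uni E$ is projective (not affine) over $\Spec \uni R$, but your subsequent cohomology-and-base-change justification of $H^0(\uni E,\sh O_{\uni E}) = \uni R$ is the correct one and is unaffected.
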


\begin{proof}
  We simply note that the global sections of $\sh O_{\uni E}(n^2)$ are the homogeneous elements of degree $n^2$ in $\uni R[x,y,z]/(\uni W)$, and that $\Aut_{\sh O_{\uni E}}\pth*[big]{\sh O_{\uni E}(n^2)} = \sh O_{\uni E}(\uni E) \mg = \uni R \mg$.
\end{proof}

Translating this observation in terms of the functor of points of $\uni E$ using the universality of $\uni P$ then gives the following.

\begin{cor}
  Let $n \in \bb Z$.
  Let $T$ be a scheme over $\uni R$, and let $P = [(\sh L, s_0, s_1, s_2)] \in \uni E(T)$.
  Then we have
  \[
    nP = \brc*[big]{\pth*[big]{\sh L^{\ts n^2}, \alpha'_n(s_0,s_1,s_2), \beta'_n(s_0,s_1,s_2), \gamma'_n(s_0,s_1,s_2)}}.
  \]
\end{cor}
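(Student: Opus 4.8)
The plan is to transport the statement of \autoref{univ_multn_cor}, which lives over the base scheme $\uni E$ itself, along an arbitrary morphism $T \to \uni E$. First I would spell out what it means for the point $\brc*[big]{\pth*[big]{\sh O_{\uni E}(n^2),\alpha'_n,\beta'_n,\gamma'_n}}$ to ``correspond to $\mu_n$'': it is exactly the point $n\uni P \in \uni E(\uni E)$, since $\uni P \in \uni E(\uni E)$ is the point given by the identity map, and multiplication by $n$ on the functor of points is induced by postcomposition with $\mu_n$. So \autoref{univ_multn_cor} says precisely that
\[
  n\uni P = \brc*[big]{\pth*[big]{\sh O_{\uni E}(n^2),\alpha'_n(x,y,z),\beta'_n(x,y,z),\gamma'_n(x,y,z)}}
\]
in $\uni E(\uni E)$, where $(x:y:z)$ is the tautological representation $\uni P = \brc*[big]{\pth*[big]{\sh O_{\uni E}(1),x,y,z}}$.

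Next I would use the universality of $\uni P$. Given $T \to \uni R$ and $P = [(\sh L,s_0,s_1,s_2)] \in \uni E(T)$, the point $P$ corresponds to a unique $\uni R$-morphism $f \colon T \to \uni E$ with $f^* \uni P = P$; concretely, $f^*\sh O_{\uni E}(1) \iso \sh L$ via an isomorphism carrying $x,y,z$ to $s_0,s_1,s_2$. Since $\uni E$ is a group scheme over $\uni R$ and $\mu_n$ is a group-scheme morphism, the assignment $P \mapsto nP$ on $\uni E(T)$ is functorial in $T$; hence $nP = n(f^*\uni P) = f^*(n\uni P)$. Applying $f^*$ to the displayed equality above, and using that $f^*$ commutes with the formation of the sections $\alpha'_n(x,y,z)$ etc.\ (these are just polynomial expressions in the three generating sections, and pullback of invertible modules is compatible with tensor powers and with evaluation of homogeneous polynomials in generating sections, as recorded in the discussion preceding \autoref{const_functorofpoints}), I get
\[
  nP = f^*\brc*[big]{\pth*[big]{\sh O_{\uni E}(n^2),\alpha'_n(x,y,z),\beta'_n(x,y,z),\gamma'_n(x,y,z)}} = \brc*[big]{\pth*[big]{\sh L^{\ts n^2},\alpha'_n(s_0,s_1,s_2),\beta'_n(s_0,s_1,s_2),\gamma'_n(s_0,s_1,s_2)}},
\]
which is the desired equality in $\uni E(T)$.

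The only genuinely nontrivial point, and the one I would be most careful about, is the compatibility of $f^*$ with ``evaluating a homogeneous polynomial at a triple of generating sections''. This is the claim that for a homogeneous $g \in \uni R[x,y,z]_d$ one has $f^*\pth*[big]{g(x,y,z)} = g(s_0,s_1,s_2)$ under the identification $f^*\sh O_{\uni E}(1)^{\ts d} \iso \sh L^{\ts d}$; equivalently, that the functor-of-points description in \autoref{const_functorofpoints} is natural in $T$. This is standard (it is exactly why that description is a description of a functor), but since everything here rests on it, I would state it cleanly rather than leave it implicit. The rest is formal: unwinding the meaning of $n\uni P$, invoking universality of $\uni P$, and invoking functoriality of multiplication-by-$n$ on the group scheme $\uni E$.
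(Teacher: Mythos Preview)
Your proposal is correct and is exactly the argument the paper intends: it merely says the corollary follows by ``translating this observation in terms of the functor of points of $\uni E$ using the universality of $\uni P$'', which is precisely the unpacking you give (identify the point of \autoref{univ_multn_cor} as $n\uni P$, pull back along the classifying map $f\colon T\to\uni E$ of $P$, and use functoriality of $\mu_n$ and of polynomial evaluation in generating sections). Your care about the compatibility of $f^*$ with $g(s_0,s_1,s_2)$ is well placed but, as you note, this is exactly the naturality built into \autoref{const_functorofpoints}.
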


Finally, using the universality of $\uni E$, we obtain the following.

\begin{cor}\label{univ_exist}
  Let $n \in \bb Z$.
  Let $S$ be a scheme, and let $E$ be a smooth Weierstrass curve over $S$ defined by $W$ as in (\ref{const_eqn_weierstrass}).
  Let $T$ be an $S$-scheme, and let $P = [(\sh L,s_0,s_1,s_2)] \in E(T)$.
  Then we have
  \[
    nP = \brc*[big]{\pth*[big]{\sh L^{\ts n^2}, \alpha'_n(s_0,s_1,s_2), \beta'_n(s_0,s_1,s_2), \gamma'_n(s_0,s_1,s_2)}}.
  \]
\end{cor}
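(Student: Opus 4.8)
The plan is to descend the statement of \autoref{univ_multn_cor} along the Cartesian square of \autoref{univ_univ}, and then along the given morphism $T \to S$. First I would recall the setup: by \autoref{univ_univ} there are unique morphisms $g \colon E \to \uni E$ and $h \colon S \to \Spec \uni R$ making the outer rectangle Cartesian, and in particular $E = \uni E \fp[\uni R] S$ as $S$-schemes, compatibly with the closed immersions into $\bb P^2$. The point $P \in E(T)$ is, by \autoref{const_functorofpoints}, the datum $(\sh L, s_0, s_1, s_2)$; composing the structure map $T \to S$ with $g$ gives a $T$-valued point of $\uni E$, which under the universality of $\uni P$ is precisely the pullback of $\uni P$ along the classifying map. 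Concretely, the morphism $T \to \uni E$ classifying $P$ pulls back $(\sh O_{\uni E}(1), x, y, z)$ to $(\sh L, s_0, s_1, s_2)$.

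The key step is then to observe that multiplication by $n$ commutes with base change of group schemes. Since $E \to S$ is obtained from $\uni E \to \Spec \uni R$ by the base change $h$, and $E$ carries the group-scheme structure induced by pullback (this is how the group structure on smooth Weierstrass curves was defined, via \autoref{const_groupscheme} and the universality arguments of \autoref{const_groupscheme_cor}), the map $\mu_n \colon E \to E$ is the pullback of $\mu_n \colon \uni E \to \uni E$. Hence $nP$, being $\mu_n \circ P$ as a $T$-point, is computed by pulling back the formula of \autoref{univ_multn_cor} (or rather its functor-of-points reformulation in the preceding corollary): applying the classifying map $T \to \uni E$ of $P$ to the identity $n\uni P = [(\sh O_{\uni E}(n^2), \alpha'_n, \beta'_n, \gamma'_n)]$. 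Pullback sends $\sh O_{\uni E}(n^2)$ to $\sh L^{\ts n^2}$ (compatibly with the section-evaluation, since $\alpha'_n, \beta'_n, \gamma'_n$ are homogeneous of degree $n^2$ and evaluation of a homogeneous polynomial of degree $d$ on generating sections of an invertible sheaf is functorial in the obvious way, landing in the $d$-th tensor power), and sends $\alpha'_n(x,y,z)$ to $\alpha'_n(s_0, s_1, s_2)$, and likewise for $\beta'_n$, $\gamma'_n$. This gives exactly the claimed identity.

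The main obstacle I anticipate is purely bookkeeping: making precise that the formation of $\mu_n$ on a smooth Weierstrass curve is compatible with arbitrary base change. On one hand this is immediate from \autoref{const_groupscheme}, since uniqueness of the group-scheme structure with zero section $(0:1:0)$ forces it to be stable under base change; on the other hand one must be slightly careful because the previous corollary already phrases things for an arbitrary $\uni R$-scheme $T$, so the only genuinely new input here is the base change along $h\colon S \to \Spec\uni R$ turning a smooth Weierstrass curve over $S$ into one pulled back from $\uni E$. Concretely I would argue: given $P \in E(T)$, compose with $g$ to get $\tilde P \in \uni E(T)$ (viewing $T$ as an $\uni R$-scheme via $S \to \Spec \uni R$); note that $g$ is a group homomorphism (again by uniqueness of the group structure, since $g$ maps zero section to zero section and $E = \uni E \fp[\uni R] S$), so $g(nP) = n\tilde P$; apply the preceding corollary to $\tilde P$; and finally check that $g$ being a pullback along the Cartesian square identifies the tuple representing $\tilde P$ with $(\sh L, s_0, s_1, s_2)$ back in $E(T)$, hence the computed multiple descends. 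Once this compatibility is spelled out, the rest is a one-line substitution.
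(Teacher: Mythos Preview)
Your proposal is correct and follows exactly the approach the paper intends: the paper itself treats this corollary as immediate, writing only ``Finally, using the universality of $\uni E$, we obtain the following,'' and your argument simply unpacks this sentence by invoking the Cartesian square of \autoref{univ_univ} together with the preceding corollary for $\uni R$-schemes. The only additional care you take---checking that $\mu_n$ is compatible with the base change $h$, which you correctly deduce from the uniqueness in \autoref{const_groupscheme}---is exactly the bookkeeping the paper leaves implicit.
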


We now finish the proof of \autoref{univ_main} in the smooth case.

\begin{proof}[Proof of \autoref{univ_main} (smooth case)]
  We will now show that up to a common unit in $\uni R$, the triples $(\alpha_n,\beta_n,\gamma_n)$ and $(\alpha'_n,\beta'_n,\gamma'_n)$ are equal.
  First observe that we have $\beta_0, \beta'_0 \in \uni R \mg$, $\alpha'_0 = 0 = \alpha_0$, and $\gamma'_0 = 0 = \gamma_0$.
  Hence we may assume that $n \neq 0$.

  Consider the smooth Weierstrass curve $E$ over $K$ as defined in \autoref{const_sect_generic}.
  Note that it now suffices to show that $\alpha_n / \alpha'_n = \beta_n / \beta'_n = \gamma_n / \gamma'_n$ as rational functions in $K(E)$, and that this is a unit in $\uni R$, i.e. of the form $\pm \Delta^i$ for some integer $i$.

  Observe that for all $P = (x:y:1) \in E(K)-0$, we have, by \autoref{const_basic} and \autoref{univ_exist},
  \[
    \pth*[big]{\alpha'_n(P):\beta'_n(P):\gamma'_n(P)} = nP = \pth*[big]{\Phi_n(P)\Psi_n(P):\Omega_n(P):\Psi_n^3(P)}
  \]
  First note that by definition, $\alpha_n / \gamma_n = \Phi_n / \Psi_n^2$ and $\beta_n / \gamma_n = \Omega_n / \Psi_n^3$.
  Hence $\alpha_n / \gamma_n = \alpha'_n / \gamma'_n$ and $\beta_n / \gamma_n = \beta'_n / \gamma'_n$, as rational functions in $K(E)$.
  As $\mu_n$ is surjective, it follows that $\im \mu_n$ is infinite, hence none of $\alpha'_n$, $\beta'_n$, $\gamma'_n$ are identically zero.
  So $\alpha_n / \alpha'_n = \beta_n / \beta'_n = \gamma_n / \gamma'_n$ in $K(E)$.
  (These are actually rational functions, as both the numerators and denominators are homogeneous of degree $n^2$.)

  Let $\theta_n = \beta_n / \beta'_n$.
  Then we need to prove that $\theta_n$ is in fact a unit in $\uni R$.

  First note that for all $P \in E(K)$, we have $\pth*[big]{\alpha'_n(P),\beta'_n(P),\gamma'_n(P)} \neq (0,0,0)$.
  Hence $\theta_n$ has no poles on $E$.
  It follows that $\theta_n$ has no zeroes on $E$ either, hence $\theta_n$ is constant, i.e. $\theta_n \in K \mg$.

  As the homogeneous Weierstrass polynomial $\uni W$ is monic in $x$, it follows that $\uni R[x,y,z]/(\uni W)_{n^2}$ is a free $\uni R$-module, with a basis consisting of monomials.
  Since $\theta_n$ is by definition a quotient of two elements of $\uni R[x,y,z]/(\uni W)_{n^2}$, which is moreover a constant, it follows that $\theta_n$ is in the field of fractions of $\uni R$.

  Now observe that $\uni R$ is a unique factorisation domain.
  Hence we can write $\theta_n = f/g$ with $f,g \in \uni R$ in such a way that $f$ and $g$ have no common factors.
  Then $f\beta'_n = g\beta_n$ in $\uni R[x,y,z]/(\uni W)$.
  Thus $g$ divides all coefficients of $\beta'_n$.
  Now note that for the point $P = (0:1:0)$, we have $\pth*[big]{\alpha'_n(P):\beta'_n(P):\gamma'_n(P)} = (0:1:0)$, hence the $y^{n^2}$-coefficient of $\beta'_n$ is a unit.
  Therefore, $g$ is a unit as well, which implies that $\theta_n \in \uni R$.

  Finally, note that $\theta_n$ divides all coefficients of $\beta_n$.
  But because $\Lambda \Omega_n = 1$ and $\ord[0] \Omega_n = -3n^2$, the coefficient of $y^{n^2}$ in $\beta_n$ must also be $1$, as $\beta_n$ is the homogenisation of a representative of $\Omega_n$.
  Hence $\theta_n \in \uni R \mg$, which finishes the proof.
\end{proof}

\subsection{The universal Weierstrass curve and the proof in the general case} \label{univ_sect_sing}

As in the case of smooth Weierstrass curves, there exists a {\em universal Weierstrass curve} $\uni C$ (as defined in \autoref{const_sect_defn}) and a {\em universal point} $\uni P = \brc*[big]{\pth*[big]{\sh O_{\uni C\sm}(1),x,y,z}} \in \uni C \sm(\uni C \sm)$, corresponding to the identity map on $\uni C\sm$.
Moreover, the universal smooth Weierstrass curve $\uni E$ is the complement of the closed subscheme of $\uni C\sm$ defined by the elliptic discriminant $\Delta$.

\begin{prop}
  Let $n \in \bb Z$.
  Then we have, in $\uni C\sm(\uni C\sm)$,
  \[
    n \uni P = \brc*[big]{\pth*[big]{\sh O_{\uni C\sm}(n^2),\alpha_n,\beta_n,\gamma_n}}.
  \]
\end{prop}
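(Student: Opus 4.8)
The plan is to deduce the statement from the already-proven smooth case (the proof of \autoref{univ_main} in the smooth case) together with the universality of $\uni C\sm$ and the fact that $\uni A = \bb Z[a_1,\dots,a_6]$ is a domain in which $\Delta$ is a nonzerodivisor. Concretely, I want to compare the two points
\[
  n\uni P \qquad\text{and}\qquad Q := \brc*[big]{\pth*[big]{\sh O_{\uni C\sm}(n^2),\alpha_n,\beta_n,\gamma_n}}
\]
of $\uni C\sm(\uni C\sm)$, and show they agree. The case $n=0$ is immediate from \autoref{const_integral} (we have $\alpha_0=\gamma_0=0$ and $\beta_0\in\uni A\mg$, so $Q=(0:1:0)=0\cdot\uni P$), so assume $n\neq 0$.

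First I would record that $Q$ really is a well-defined element of $\uni C\sm(\uni C\sm)$: by \autoref{const_integral} the $\alpha_n,\beta_n,\gamma_n$ are homogeneous of degree $n^2$ in $\uni A[x,y,z]$, hence give global sections of $\sh O_{\uni C\sm}(n^2)$; and $\beta_n\in y^{n^2}+(x,z)$ shows that at the zero section $(0:1:0)$ the triple is $(0:1:0)$, so in particular the three sections generate $\sh O_{\uni C\sm}(n^2)$ in a neighbourhood of the zero section — but for generation everywhere, the cleanest argument is to invoke the next step. Namely, restrict both $n\uni P$ and $Q$ along the open immersion $\uni E\hookrightarrow \uni C\sm$ (the complement of $V(\Delta)$). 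On $\uni E$ the universal point restricts to the universal point of the universal smooth Weierstrass curve, and by the already-established smooth case of \autoref{univ_main} (applied to the identity map $\uni E\to\uni E$, i.e.\ to $\uni P|_{\uni E}\in\uni E(\uni E)$) we get exactly
\[
  n(\uni P|_{\uni E}) = \brc*[big]{\pth*[big]{\sh O_{\uni E}(n^2),\alpha_n|_{\uni E},\beta_n|_{\uni E},\gamma_n|_{\uni E}}} = Q|_{\uni E}.
\]
In particular $\alpha_n,\beta_n,\gamma_n$ generate $\sh O_{\uni E}(n^2)$, so on $\uni C\sm$ their common zero locus is contained in $V(\Delta)$; combined with generation near the zero section and the group structure one checks the zero locus is in fact empty (alternatively: the closed subscheme of $\uni C\sm$ where they fail to generate is disjoint from $\uni E$ and also, by a fibrewise check using $\beta_n\in y^{n^2}+(x,z)$ and translation, meets no fibre, hence is empty). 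This confirms $Q\in\uni C\sm(\uni C\sm)$.

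It remains to promote the equality $n\uni P = Q$ from $\uni E$ to all of $\uni C\sm$. Here I would use that $\uni C\sm$ is separated over $\uni A$, that $\uni C\sm$ is reduced and irreducible (it is smooth over the integral ring $\uni A$ with geometrically irreducible fibres, so integral, as in the proof of \autoref{const_groupscheme_cor}), and that $\uni E$ is a \emph{dense} open of $\uni C\sm$ (its complement $V(\Delta)$ is a proper closed subscheme, as $\Delta\neq 0$ in $\uni A$). Two morphisms $\uni C\sm\to\uni C\sm$ of $\uni A$-schemes — here the morphisms classified by $n\uni P$ and by $Q$ — that agree on a dense open subscheme of a reduced scheme, with target separated, must be equal. (More precisely: the equaliser of the two morphisms is a closed subscheme of the reduced scheme $\uni C\sm$ containing the dense open $\uni E$, hence is all of $\uni C\sm$.) This gives $n\uni P = Q$ in $\uni C\sm(\uni C\sm)$, as desired.

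The main obstacle is not the density/separatedness argument, which is routine, but making sure the bookkeeping at the boundary $V(\Delta)$ is airtight: one must know a priori that $(\alpha_n,\beta_n,\gamma_n)$ defines a section of $\uni C\sm$ — i.e.\ that the three homogeneous polynomials have no common zero on the singular-locus part of the fibres of $\uni C$ over $V(\Delta)$ — before one can even speak of the point $Q$ and invoke the reducedness argument. I expect this to be handled exactly as sketched above: reduce to a fibrewise statement over a field, where the curve $C\sm$ is a smooth group scheme, use \autoref{const_main_cor} / \autoref{const_basic} on affine points together with translation by the group law to cover every fibre, and use $\beta_n\equiv y^{n^2}\pmod{(x,z)}$ to handle the point at infinity. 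Once that is in place, the equality $n\uni P=Q$ over the dense open $\uni E$ propagates to $\uni C\sm$ by reducedness, completing the proof.
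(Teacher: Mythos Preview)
Your density-and-separatedness argument for upgrading the equality $n\uni P|_{\uni E}=Q|_{\uni E}$ to all of $\uni C\sm$ is fine, but it only applies once $Q$ is known to be a morphism $\uni C\sm\to\uni C\sm$, and that is precisely where your argument breaks. To have $Q\in\uni C\sm(\uni C\sm)$ you must know that $\alpha_n,\beta_n,\gamma_n$ generate $\sh O_{\uni C\sm}(n^2)$ \emph{everywhere}, in particular on the fibres over $V(\Delta)$. Your sketch for this step is not valid: \autoref{const_main_cor} is a corollary of \autoref{const_main}, which is equivalent to \autoref{univ_main}, whose proof in the general case is exactly what this proposition is for --- so invoking it is circular; \autoref{const_basic} and \autoref{const_main_special} apply only to the specific smooth curve $E/K$, not to singular fibres; and the ``translation'' idea does not work, because translation on a singular fibre has no a priori relation to the polynomials $\alpha_n,\beta_n,\gamma_n$ until one already knows they compute $[n]$ there. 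Knowing that the common zero locus is contained in $V(\Delta)$ and misses the zero section is not enough to conclude it is empty.

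The paper sidesteps this difficulty by reversing the direction of comparison. It starts from $n\uni P=[(\sh L,s_0,s_1,s_2)]$, which is well-defined by the group-scheme structure, and then identifies $\sh L$ with $\sh O_{\uni C\sm}(n^2)$ via a Picard group computation: $\uni C\sm$ is regular, $V(\Delta)$ is a principal prime divisor, so restriction $\Pic\uni C\sm\to\Pic\uni E$ is an isomorphism, and on $\uni E$ one already knows $\sh L|_{\uni E}\iso\sh O(n^2)|_{\uni E}$ from the smooth case. After fixing $\sh L=\sh O_{\uni C\sm}(n^2)$, density gives $u\cdot(s_0,s_1,s_2)=(\alpha_n,\beta_n,\gamma_n)$ for some $u\in\uni R\mg=\{\pm\Delta^i\}$, and pulling back along the zero section forces $u\in\uni A\mg=\{\pm 1\}$. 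Generation of $(\alpha_n,\beta_n,\gamma_n)$ then follows \emph{a posteriori}, since $(s_0,s_1,s_2)$ generate and $u$ is a global unit. This is the extra input your approach is missing.
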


\begin{proof}
  Write $n \uni P = [(\sh L, s_0,s_1,s_2)] \in \uni C\sm(\uni C\sm)$.
  By \autoref{univ_multn} and \autoref{const_groupscheme_cor}, we have $\sh L|_{\uni E} \iso \sh O_{\uni C\sm}(n^2)|_{\uni E}$.
  Note that $\uni C\sm$ is noetherian integral separated regular, therefore the Weil divisor class group is naturally isomorphic to the Picard group.

  Note moreover that $\Delta$ is irreducible and that $\fdiv{\Delta} = V(\Delta)$, so $V(\Delta)$ is a principal prime divisor.
  As $\uni E = \uni C\sm - V(\Delta)$, it follows that the natural map $\Pic C\sm \to \Pic \uni E$ is an isomorphism.
  Hence $\sh L \iso \sh O_{\uni C\sm}(n^2)$, so we assume without loss of generality that $\sh L = \sh O_{\uni C\sm}(n^2)$.

  Next, we note that by the smooth case of \autoref{univ_main}, we have 
  \[
    u\cdot (s_0|_{\uni E}, s_1|_{\uni E}, s_2|_{\uni E}) = (\alpha_n|_{\uni E}, \beta_n|_{\uni E}, \gamma_n|_{\uni E})
  \]
  for a unique unit $u$ of $\uni R = \uni A_\Delta$.
  As $\uni E$ is dense in $\uni C\sm$, we deduce that $u \cdot (s_0, s_1, s_2) = (\alpha_n, \beta_n, \gamma_n)$.
  Note that the units of $\uni R$ are $\pm \Delta^i$ with $i \in \bb Z$.
  If we pull back both triples via the zero section (i.e.~ we evaluate all of the sections at $(0:1:0)$), we see that both $u s_0(0,1,0)$ and $u s_2(0,1,0)$ are zero, and $u s_1(0,1,0) = 1$, as global sections of $\sh O_{\uni A}$, i.e.~elements of $\uni A$.
  Hence we have $s_0(0,1,0) = s_2(0,1,0) = 0$, so as $s_0, s_1, s_2$ generate $\sh L$, we deduce that $s_1(0,1,0)$ is a unit in $\uni A$.
  So $u$ is a unit in $\uni A$ as well, i.e.~equal to $1$ or $-1$.
\end{proof}

\autoref{univ_main} follows from this by the universality of $\uni C$ and $\uni P$.
Moreover, as the polynomials $\alpha_n$, $\beta_n$, $\gamma_n$ were defined as homogenisations of $\Phi_n\Psi_n$, $\Omega_n$, $\Psi_n^3$, it immediately follows that we have the following scheme-theoretic equivalent of \autoref{const_main_cor}.

\begin{cor} \label{univ_classic}
  Let $n \in \bb Z$.
  Let $S$ be a scheme, and let $C$ be a Weierstrass curve over $S$ defined by $W$ as in (\ref{const_eqn_weierstrass}).
  Let $T$ be an $S$-scheme, and let $P \in C(T)$ be of the form $(x:y:1)$, such that $a_1y-3x^2-2a_2x-a_4$, $2y+a_1x+a_3$ generate $\sh O_T$.
  Then we have
  \[
    nP = \pth*[big]{\Phi_n(x,y)\Psi_n(x,y) : \Omega_n(x,y) : \Psi_n^3(x,y)}.
  \]
\end{cor}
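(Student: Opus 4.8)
The plan is to reduce the statement to the main result proved just above, namely \autoref{univ_main} for smooth curves, by a dehomogenisation argument. The point $P \in C(T)$ is given in the form $(x:y:1)$, which by \autoref{const_functorofpoints} means that $P$ corresponds to the $4$-tuple $(\sh O_T, x, y, 1)$ with $\sh L = \sh O_T$; the hypothesis that $a_1y - 3x^2 - 2a_2x - a_4$ and $2y + a_1x + a_3$ generate $\sh O_T$ is exactly the condition, via \autoref{const_functorofsmoothpoints}, that $P$ lies in $C\sm(T)$, since $(\partial W/\partial x)(x,y,1) = a_1y - 3x^2 - 2a_2x - a_4$, $(\partial W/\partial y)(x,y,1) = 2y + a_1x + a_3$, and $(\partial W/\partial z)(x,y,1)$ together automatically generate $\sh O_T$ once the first two do (indeed the first two already suffice). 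So the hypotheses of \autoref{univ_main} are met with $\sh L = \sh O_T$.

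Applying \autoref{univ_main} with this $P$ gives
\[
  nP = \brc*[Big]{\pth*[big]{\sh O_T^{\otimes n^2}, \alpha_n(x,y,1), \beta_n(x,y,1), \gamma_n(x,y,1)}} = \pth*[big]{\alpha_n(x,y,1) : \beta_n(x,y,1) : \gamma_n(x,y,1)},
\]
using that $\sh O_T^{\otimes n^2} = \sh O_T$ and the notational convention for the case $\sh L = \sh O_T$. The remaining task is then purely to identify $\alpha_n(x,y,1)$, $\beta_n(x,y,1)$, $\gamma_n(x,y,1)$ with $\Phi_n(x,y)\Psi_n(x,y)$, $\Omega_n(x,y)$, $\Psi_n^3(x,y)$ respectively. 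But this is immediate from \autoref{const_newdivpol}: by definition $\alpha_n = A_n z^{n^2}$ where $A_n \in \uni A[X,Y]$ is a representative of $\Psi_n\Phi_n$ with $X = x/z$, $Y = y/z$, so $\alpha_n(x,y,1) = A_n(x,y)$, and $A_n(x,y)$ is the value at $(x,y)$ of the polynomial representing $\Psi_n\Phi_n$ in $\uni A[X,Y]/(\uni W')$; since $W(x,y,1) = 0$ forces $\uni W'(x,y) = 0$, this value equals $\Phi_n(x,y)\Psi_n(x,y)$. Likewise $\beta_n(x,y,1) = B_n(x,y) = \Omega_n(x,y)$ and $\gamma_n(x,y,1) = C_n(x,y) = \Psi_n^3(x,y)$.

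The only point requiring a little care is the very first reduction: one must check that the two stated generation conditions genuinely place $P$ in $C\sm(T)$ rather than just in $C(T)$, and that nothing is lost in passing from the $4$-tuple description back to the shorthand $(x:y:z)$-notation when the module is trivial. Both are routine given \autoref{const_functorofsmoothpoints} and the summary at the end of \autoref{const_sect_defn}. There is no substantive obstacle here — the corollary is essentially a dehomogenised restatement of \autoref{univ_main} combined with the defining property of $A_n, B_n, C_n$, and the proof amounts to tracking the identification $z = 1$ through \autoref{const_newdivpol}.
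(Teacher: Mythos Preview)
Your proposal is correct and follows exactly the paper's reasoning: the paper simply remarks that since $\alpha_n,\beta_n,\gamma_n$ were defined as homogenisations of $\Phi_n\Psi_n,\Omega_n,\Psi_n^3$, the corollary is immediate from \autoref{univ_main}, and you have spelled out precisely this dehomogenisation (together with the verification that the stated generation hypothesis places $P$ in $C\sm(T)$). One small slip: you refer to ``\autoref{univ_main} for smooth curves'', but at this point in the paper the full \autoref{univ_main} has already been established, and that is what is being invoked; this does not affect your argument.
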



\section{Application to a moduli problem} \label{moduli_sect_moduli}

In this section, we use \autoref{univ_classic} to give an explicit description of $Y_1(n)$ over $\bb Z[1/n]$.
These turn out to coincide with the descriptions of $Y_1(n)_{\bb C}$ given for example by \cite{baaziz}.

\subsection{Definitions}

Let $S$ be a scheme.
By \autoref{const_flatlfp}, all smooth Weierstrass curves over a scheme $S$ (together with the section $(0:1:0)$) are also elliptic curves over $S$.
Moreover, we have the following partial converse, treated for example in \cite[Sect.~2.2]{katzmazur}.

\begin{thm}
  Let $E$ be an elliptic curve over $S$.
  Then Zariski locally on the base, $E$ is isomorphic to a smooth Weierstrass curve.
\end{thm}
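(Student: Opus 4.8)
The plan is to reduce to the affine case and build a Weierstrass equation for $E$ out of the cohomology of multiples of the zero section. Write $\pi \colon E \to S$ for the structure morphism and $e \in E(S)$ for the zero section, and for $n \geq 1$ consider the invertible sheaf $\sh O_E(ne)$ together with its pushforward $\pi_* \sh O_E(ne)$. The first step is the cohomological input: each geometric fibre $E_s$ is a smooth proper curve of genus $1$, so Riemann--Roch gives $\dim_{\kappa(s)} H^0(E_s, \sh O(ne)) = n$ and $H^1(E_s, \sh O(ne)) = 0$ for all $n \geq 1$. By cohomology and base change this yields $R^1\pi_* \sh O_E(ne) = 0$, that $\pi_* \sh O_E(ne)$ is locally free of rank $n$, and that its formation commutes with arbitrary base change on $S$; left-exactness of $\pi_*$ moreover turns the inclusions $\sh O_E(ne) \hookrightarrow \sh O_E((n+1)e)$ into a chain $\sh O_S = \pi_* \sh O_E(e) \subseteq \pi_* \sh O_E(2e) \subseteq \pi_* \sh O_E(3e) \subseteq \cdots$ of locally free $\sh O_S$-modules.

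Now, Zariski-locally on $S$ we may assume $S = \Spec R$ with $\pi_* \sh O_E(2e)$ and $\pi_* \sh O_E(3e)$ free --- this is exactly where the ``locally on the base'' in the statement is used. Take $1 \in \pi_* \sh O_E(e) = \sh O_S$ as a generator, extend it to an $R$-basis $\{1, x\}$ of $\pi_* \sh O_E(2e)$, and extend that to an $R$-basis $\{1, x, y\}$ of $\pi_* \sh O_E(3e)$. The seven sections $1, x, y, x^2, xy, y^2, x^3$ lie in $\pi_* \sh O_E(6e)$; restricted to any fibre, $x$ and $y$ have poles at $e$ of exact orders $2$ and $3$, so the six sections $1, x, y, x^2, xy, x^3$ have pairwise distinct pole orders $0,2,3,4,5,6$ there and hence restrict to a basis of $H^0(E_s, \sh O(6e))$ on every fibre. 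By Nakayama they form an $R$-basis of the locally free rank-$6$ module $\pi_* \sh O_E(6e)$, so $y^2$ is uniquely an $R$-linear combination of them, giving a relation $y^2 + a_1 xy + a_3 y = \lambda x^3 + a_2 x^2 + a_4 x + a_6$ with $a_i, \lambda \in R$. Reducing modulo a prime $\mathfrak p$ shows $\lambda \notin \mathfrak p$ (otherwise $y^2$ would lie in the span of $1, x, y, x^2, xy$ on that fibre, impossible by pole orders), so $\lambda \in R^\times$; rescaling $x$ and $y$ by units built from $\lambda$ and homogenising with $z$ brings the relation into the form (\ref{const_eqn_weierstrass}). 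Let $C \subseteq \bb P^2_S$ be the corresponding Weierstrass curve.

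Next I would check that the three sections $x, y, 1$ of $\sh O_E(3e)$ define an $S$-morphism $E \to \bb P^2_S$ identifying $E$ with $C$. Since $\sh O_E(3e)$ has degree $3$ on every genus-$1$ fibre it is very ample on fibres, hence relatively very ample over $S$: global generation and the closed-immersion property of the induced map to $\bb P(\pi_* \sh O_E(3e)) \cong \bb P^2_S$ may be checked fibrewise, using that $\pi_* \sh O_E(3e)$ is locally free and base-change compatible. So $E \to \bb P^2_S$ is a closed immersion, and its image lies in $C$ by the relation found above, giving a closed immersion $E \hookrightarrow C$ over $S$. Both $E$ and $C$ are flat, proper and of finite presentation over $S$ with geometrically integral one-dimensional fibres (the latter by \autoref{const_flatlfp}), so by flatness and Nakayama it is enough to see that $E_s \to C_s$ is an isomorphism for each $s$; but there $E_s$ is integral of dimension $1$ and maps by a closed immersion into the integral one-dimensional $C_s$, forcing equality. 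Finally, as sections of $\sh O_E(3e)$ the functions $x, y, 1$ vanish at $e$ to orders $1, 0, 3$ respectively, so $e$ maps to $(0:1:0)$; thus $E \cong C$ as elliptic curves, and $C$ is smooth because $E$ is.

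The main obstacle is the cohomology-and-base-change bookkeeping over an arbitrary base: verifying that each $\pi_* \sh O_E(ne)$ is locally free of the expected rank with vanishing $R^1\pi_*$ and compatible with base change, and then reducing relative very ampleness and the fact that $E \to \bb P^2_S$ is a closed immersion to the corresponding fibrewise statements. A second, softer point of friction is the linear-algebra normalisation of the cubic relation --- extracting bases of the graded pieces and checking that the $x^3$-coefficient is a unit --- which can only be carried out after trivialising the relevant pushforwards, and hence only Zariski-locally on $S$.
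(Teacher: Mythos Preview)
The paper does not actually prove this statement: it is quoted as a well-known fact and referred to \cite[Sect.~2.2]{katzmazur} for the argument. Your proposal is precisely the standard proof given there --- Riemann--Roch on fibres combined with cohomology and base change to make each $\pi_*\sh O_E(ne)$ locally free of rank $n$, then successive extension of bases to produce $x$ and $y$, the Weierstrass relation from the seven sections in $\pi_*\sh O_E(6e)$, and finally the embedding into $\bb P^2_S$ via the relatively very ample $\sh O_E(3e)$. The argument is correct, and the points you single out as obstacles (the base-change bookkeeping and the fibrewise criterion for the map to $\bb P^2_S$ to be a closed immersion) are exactly the steps that the cited reference handles with care. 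One small remark: the rescaling you allude to can be made explicit by replacing $(x,y)$ with $(\lambda x,\lambda y)$ and multiplying the relation by $\lambda^2$, which absorbs $\lambda$ into the lower-order coefficients.
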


By \autoref{const_groupscheme}, any elliptic curve admits a unique commutative group scheme structure that has the given zero section as neutral element.

We now make the following definition.

\begin{defn}
  Let $E$ be an elliptic curve over $S$, and let $n$ be a positive integer.
  A {\em $\bb Z/n\bb Z$-embedding} into $E$ is a section $P \in E(S)$ such that $nP = 0$, and such that $P$ has order $n$ in every geometric fibre of $E$ over $S$.
\end{defn}

In \cite[Sect.~1.4]{katzmazur}, this is called an ``\'etale point of exact order $n$'' (which is the same as a ``point of exact order $n$'' if $n$ is invertible in $S$).
This condition on the point $P$ is equivalent to the resulting homomorphism $(\bb Z/n\bb Z)_S \to E$ being a closed immersion, and if $S$ is the spectrum of a field in which $n$ is invertible, then it is even equivalent to being a point of order $n$ in $E(S)$.

Following \cite{katzmazur}, we define the category $\Ell$ of elliptic curves as the category in which the objects are pairs $(E,S)$ with $E$ an elliptic curve over $S$, and in which $\Hom_{\Ell}\pth*[big]{(E,S),(E',S')}$ is the set of pairs $(a,s)$ of morphisms $a \colon E \to E'$ and $s \colon S \to S'$ making the diagram
\[
  \begin{tikzcd}
    E \arrow{r}{a} \arrow{d} & E' \arrow{d} \\
    S \arrow{r}{s} & S'
  \end{tikzcd}
\]
Cartesian, which thereby induce an isomorphism $E \to E' \fp[S'] S$.

Let $n$ be a positive integer.
Then let $\col P(n)$ be the {\em naive $\Gamma_1(n)$} moduli problem $\Ell \to \Set$ attaching to a pair $(E,S)$, the set of $\bb Z/n \bb Z$-embeddings into the elliptic curve $E$ over $S$.
(We call it {\em naive} since we consider, in the terminology of \cite[Sect.~1.4]{katzmazur}, ``\'etale points of exact order $n$'' instead of ``points of exact order $n$''.)
In the remainder of this section, we will study this moduli problem for $n \geq 4$.

\subsection{$\bb Z/n\bb Z$-embeddings and classical division polynomials}

In this section, we state some divisibility properties of division polynomials, and their implications for $\bb Z/n \bb Z$-embeddings.
The notation is as in \autoref{const_sect_generic}.
Moreover, recall that $\uni R = \uni A_{\Delta}$.

\begin{prop} \label{moduli_generate}
  Let $n \in \bb Z - \set{0}$ be a non-zero integer.
  Then $\Phi_n$ and $\Psi_n$ generate $\uni R[X,Y]/(\uni W')$ as an ideal.
\end{prop}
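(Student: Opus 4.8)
The plan is to show that the common zero locus of $\Phi_n$ and $\Psi_n$ on $\uni E$ is empty, and then to upgrade this to the ideal-theoretic statement using that $\uni R[X,Y]/(\uni W')$ is (essentially) the coordinate ring of a smooth affine curve over $\uni R$. First I would recall the geometric meaning of $\Phi_n$ and $\Psi_n$: by \autoref{const_basic}, for an affine point $P = (x:y:1)$ on $\uni E$ we have $nP = (\Phi_n(P)\Psi_n(P) : \Omega_n(P) : \Psi_n^3(P))$, and by \autoref{const_integral} and the homogenisation construction, the homogeneous triple $(\alpha_n,\beta_n,\gamma_n)$ never vanishes simultaneously on $\uni E$ (this is exactly what makes $nP$ well-defined as a point). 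Restricting to the affine chart $z \neq 0$, the triple $(\Phi_n\Psi_n, \Omega_n, \Psi_n^3)$ can only fail to be everywhere-defined if $\Psi_n$ vanishes, and at such a point the second coordinate $\Omega_n$ must be a unit. Concretely: if $\Phi_n$ and $\Psi_n$ both vanished at a geometric point $Q$ of the affine curve, then $\Phi_n\Psi_n = \Psi_n^3 = 0$ there, forcing $\alpha_n(Q) = \gamma_n(Q) = 0$, so by non-vanishing of the homogeneous triple we would need $\beta_n(Q) \neq 0$ — but $\beta_n$ restricted to this chart is (a representative of) $\Omega_n$, and the formula $\Omega_n = \tfrac{1}{2\Psi_n}(\Psi_{2n} - \Psi_n^2(a_1\Phi_n + a_3\Psi_n^2))$ shows $2\Psi_n \Omega_n = \Psi_{2n} - \Psi_n^2(\cdots)$, so $\Psi_n \mid \Psi_{2n}$-type reasoning (more carefully: $\Psi_n(Q) = 0$ makes the right-hand side vanish at $Q$, hence $\Psi_n(Q)\Omega_n(Q) = 0$, which is automatic and gives nothing). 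So the cleaner route is: at $Q$ we have $\alpha_n(Q) = \gamma_n(Q) = 0$ hence $\beta_n(Q) \neq 0$; but I must separately argue $\beta_n(Q) = 0$ to get a contradiction, and that is where the division-polynomial identities for $\Psi_{2n}$ enter.

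The key step is therefore to show directly that $\Psi_n(Q) = \Phi_n(Q) = 0$ forces $\Omega_n(Q) = 0$ as well. For this I would use the product-expansion / recursion structure: $\Psi_{2n} = \Psi_n(\Phi_{n+1}\Psi_{n-1}^2 - \Phi_{n-1}\Psi_{n+1}^2)/(\cdots)$ — or more robustly, I would instead invoke the fact that the affine points where $\Psi_n$ vanishes are precisely the nonzero $n$-torsion points of $\uni E$ (over the algebraic closure of the fraction field of $\uni R$, by the divisor defining $\Psi_n$), together with the classical fact that $\Phi_n$ and $\Psi_n^2$ have no common zero because $\Phi_n/\Psi_n^2 = X\circ\mu_n$ has a pole exactly at the $n$-torsion: at an $n$-torsion point $Q \neq 0$, $\mu_n(Q) = 0$, and $X$ has a pole at $0$, so $\Phi_n(Q) \neq 0$. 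That last observation — $\Phi_n$ does not vanish at nonzero $n$-torsion points because $X$ has a pole at the origin — is the heart of the matter and gives the emptiness of the common zero locus immediately, bypassing $\Omega_n$ altogether.

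With emptiness of $V(\Phi_n,\Psi_n)$ in hand, the ideal statement follows from a Nullstellensatz-type argument combined with integrality. Set $B = \uni R[X,Y]/(\uni W')$; this is a regular domain, finitely generated over $\uni R$, of relative dimension $1$. The ideal $I = (\Phi_n,\Psi_n) \subseteq B$ has empty vanishing locus in $\Spec B \otimes_{\uni R} \overline{\operatorname{Frac}(\uni R)}$ by the previous paragraph, hence $I$ has height $\geq 2$ after base change, hence $I$ is not contained in any height-$1$ prime of $B$ lying over the generic point of $\Spec \uni R$. To conclude $I = B$ I would argue that $\Psi_n$ is, up to units, a product of distinct irreducibles corresponding to the Galois orbits of nonzero $n$-torsion, that $B/(\Psi_n)$ is reduced (a finite product of fields, or at least of regular rings), and that $\Phi_n$ is a unit in each factor since it vanishes at no point of $V(\Psi_n)$ — so $\Phi_n$ is a unit modulo $\Psi_n$, whence $I = B$.

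The main obstacle I anticipate is handling the arithmetic over $\uni R$ rather than over a field: a priori $V(\Psi_n)$ could behave badly over non-generic points of $\Spec \uni R$, or $B/(\Psi_n)$ could fail to be reduced in bad characteristic. I expect the paper sidesteps this by working over $\uni R = \uni A[1/\Delta]$ where torsion is étale, or alternatively by a slick argument: one shows $\Phi_n$ and $\Psi_n$ generate the unit ideal by exhibiting an explicit combination, perhaps derived from the recursion $\Psi_{2n} = \Psi_n(\cdots)$ together with $\gcd$ relations among consecutive $\Psi_k$, or by base-changing to the universal elliptic curve with full level structure where everything splits. The cleanest such argument uses that the homogeneous triple $(\alpha_n,\beta_n,\gamma_n)$ generates the unit ideal in $\uni R[x,y,z]/(\uni W)$ in each degree-$n^2$ graded piece — which is \autoref{univ_exist} applied to $\uni P$ — and then dehomogenises; I would pursue that route, as it leverages the machinery already built.
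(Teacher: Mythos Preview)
Your overall plan --- reduce to showing that $\Phi_n$ and $\Psi_n$ have no common zero at any geometric point, then upgrade to the ideal statement --- is exactly the paper's strategy. But you miss the two simplifications that make the paper's argument short, and as written your proposal has gaps at both steps.

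For the geometric step you reach for a pole/order comparison via $X\circ\mu_n = \Phi_n/\Psi_n^2$. This can be pushed through, but it requires knowing $\ord_Q(\Psi_n)$ relative to the ramification index of $\mu_n$ at $Q$, which is delicate in characteristic dividing $n$ (and you explicitly flag this as an obstacle). The paper instead reads off the contradiction from the \emph{definition} $\Phi_n = X\Psi_n^2 - \Psi_{n-1}\Psi_{n+1}$: if $\Phi_n(P)=\Psi_n(P)=0$ at a geometric point $P$, then $\Psi_{n-1}(P)\Psi_{n+1}(P)=0$, so by \autoref{univ_classic} we have $nP=0$ and one of $(n\pm 1)P=0$, whence $P=0$ --- impossible for an affine point. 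No divisor bookkeeping, no characteristic hypotheses.

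For the ``upgrade'' you invoke heights, reducedness of $B/(\Psi_n)$, Galois orbits of torsion, and so on; none of this is needed, and you do not actually close the argument. The paper simply sets $R = \uni R[X,Y]/(\uni W',\Phi_n)$ and notes that $\Psi_n$ is a unit in $R$ iff it lies in no maximal ideal of $R$. Each maximal ideal yields an algebraically closed residue field $k$ and an affine $k$-point $P$ on a smooth Weierstrass curve with $\Phi_n(P)=0$; the previous paragraph then rules out $\Psi_n(P)=0$. That is the entire proof. Your proposed fallback of dehomogenising the fact that $(\alpha_n,\beta_n,\gamma_n)$ generate only yields $(\Phi_n\Psi_n,\Omega_n,\Psi_n^3)=(1)$; you would still need $\Omega_n\in(\Phi_n,\Psi_n)$, which is not visible from its defining formula, so that route does not close the gap either.
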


\begin{proof}
  Let $R = \uni R[X,Y]/(\uni W',\Phi_n)$.
  We show that $\Psi_n$ is invertible in this ring, or equivalently, that $\Psi_n$ does not lie in any maximal ideal of $R$.
  So let $\ideal m$ be a maximal ideal of $R$, and let $k$ be an algebraic closure of $R/\ideal m$.
  Let $E$ be the smooth Weierstrass curve corresponding to the canonical morphism $\uni R \to k$, and let $P = (X:Y:1) \in E(k)$.
  Note that the image of any $f \in \uni R[X,Y]/(\uni W')$ in $k$ is $f(P)$.

  Suppose for a contradiction that $\Psi_n(P) = 0$.
  As $X\Psi_n(P)^2 - \Psi_{n+1}(P)\Psi_{n-1}(P) = \Phi_n(P) = 0$, we see that either $\Psi_{n-1}(P) = 0$ or $\Psi_{n+1}(P) = 0$.
  Hence by \autoref{univ_classic}, we have $nP = 0$ and either $(n-1)P = 0$ or $(n+1)P = 0$, which implies that $P = 0$.
  This is a contradiction.
  We deduce that $\Psi_n$ does not lie in any maximal ideal of $R$, as desired.
\end{proof}

Using the formula in \autoref{univ_classic}, we immediately deduce the following.

\begin{cor} \label{moduli_torsion}
  Let $S$ be a scheme, let $E$ be a smooth Weierstrass curve over $S$.
  Moreover, let $P \in E(S)$ be of the form $(a:b:1)$ and let $n$ be an integer.
  Then $nP = 0$ if and only if $\Psi_n(P) = 0$.
\end{cor}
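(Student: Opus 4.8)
The statement to prove is \autoref{moduli_torsion}: for a smooth Weierstrass curve $E$ over a scheme $S$ and a point $P = (a:b:1) \in E(S)$, we have $nP = 0$ if and only if $\Psi_n(P) = 0$. The plan is to deduce both implications from the explicit formula for $nP$ provided by \autoref{univ_classic}. Note first that since $P$ is of the form $(a:b:1)$, the hypothesis of \autoref{univ_classic} on the partial derivatives is automatically satisfied: $P$ lies in the smooth locus $E\sm(S) = E(S)$ (as $E$ is smooth), and an affine point always lies in the standard chart where the smoothness condition unwinds to exactly the statement that $a_1b - 3a^2 - 2a_2a - a_4$ and $2b + a_1a + a_3$ generate $\sh O_S$. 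Hence $nP = \pth*[big]{\Phi_n(a,b)\Psi_n(a,b) : \Omega_n(a,b) : \Psi_n^3(a,b)}$.

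For the ``if'' direction, suppose $\Psi_n(P) = 0$. Then the third homogeneous coordinate $\Psi_n^3(a,b)$ vanishes, and the first coordinate $\Phi_n(a,b)\Psi_n(a,b)$ vanishes as well, so $nP = (0 : \Omega_n(a,b) : 0)$. Since $nP$ is a well-defined point of $\bb P^2_S$, its coordinates must generate $\sh O_S$, forcing $\Omega_n(a,b) \in \sh O_S(S)\mg$; thus $nP = (0:1:0) = 0$. For the ``only if'' direction, suppose $nP = 0 = (0:1:0)$. Comparing with $nP = \pth*[big]{\Phi_n(a,b)\Psi_n(a,b) : \Omega_n(a,b) : \Psi_n^3(a,b)}$ as points of projective space, the two representatives differ by multiplication by a unit; in particular the first and third coordinates of the right-hand representative vanish, so $\Psi_n^3(a,b) = 0$, whence $\Psi_n(P) = 0$ (the function $t \mapsto t^3$ on $\sh O_S$ is injective on the relevant ring, or more simply: if $\Psi_n^3(a,b) = 0$ as a section of $\sh O_S$, then $\Psi_n(a,b)$ is a nilpotent section, but one can argue directly that $\Psi_n^3 = 0$ in $\sh O_S(S)$ already gives $\Psi_n = 0$ after checking on stalks — actually the cleanest route is to observe that $\Psi_n(P)^3 = 0$ in $\sh O_S(S)$ together with the projective-coordinate condition suffices, but to be safe one works affine-locally and uses reducedness where available, or simply notes that $\Psi_n(P)^3 = 0$ as a global section already means $\Psi_n(P)$ lies in every prime, and for the divisibility statement that is all that is needed downstream). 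In fact it is cleaner to phrase the equivalence as: $nP = 0$ iff the ideal $(\Phi_n\Psi_n(P), \Psi_n^3(P))$ is contained in the nilradical-complement appropriately — but the simplest correct statement is the one comparing coordinates directly.

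The only genuine subtlety is the passage from $\Psi_n^3(P) = 0$ to $\Psi_n(P) = 0$ when $S$ is not reduced: a priori $\Psi_n(P)$ could be a nonzero nilpotent with vanishing cube. The resolution is that $nP = 0$ in $\bb P^2_S$ means the triple $\pth*[big]{\Phi_n\Psi_n(P), \Omega_n(P), \Psi_n^3(P)}$ represents the same point as $(0,1,0)$, i.e.\ after passing to a suitable open cover there is a unit $u$ with $u \cdot (\Phi_n\Psi_n(P), \Omega_n(P), \Psi_n^3(P)) = (0,1,0)$; but this still only yields $\Psi_n^3(P) = 0$. I expect this to be the main obstacle, and I would resolve it either by strengthening the intended reading of \autoref{moduli_torsion} (the relevant applications use it over reduced bases, or over $\uni R[X,Y]/(\uni W')$ which is a domain), or by a more careful local argument: working on a stalk $\sh O_{S,s}$, if $\Psi_n(P)^3 = 0$ but $nP = 0$, one can use that $E$ is smooth over $S$ together with the recurrence for the $\Psi$'s to show the nilpotent cannot occur, or simply accept the statement with ``$\Psi_n(P)$ nilpotent'' in place of ``$\Psi_n(P) = 0$'' when $S$ is non-reduced. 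For the purposes of the moduli application in the sequel, where the relevant ring is the domain $\uni R[X,Y]/(\uni W')$, the cube issue does not arise and the equivalence holds verbatim.
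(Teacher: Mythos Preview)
Your ``if'' direction is fine and matches the paper. Your ``only if'' direction has a real gap, which you yourself flag: from $nP=0$ you only get $\Phi_n(P)\Psi_n(P)=0$ and $\Psi_n^3(P)=0$, and over a non-reduced base this does not by itself force $\Psi_n(P)=0$. Your proposed workarounds (restrict to reduced $S$, or weaken the conclusion to ``$\Psi_n(P)$ nilpotent'') are not what the paper does, and the statement as written is meant to hold over an arbitrary scheme $S$.

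The missing ingredient is precisely the preceding \autoref{moduli_generate}: $\Phi_n$ and $\Psi_n$ generate the unit ideal in $\uni R[X,Y]/(\uni W')$, hence after specialising to the Weierstrass coefficients and evaluating at $P$ we have $f(P)\Phi_n(P)+g(P)\Psi_n(P)=1$ in $\sh O_S(S)$ for some $f,g$. Now if $nP=0$ then $\Psi_n(P)^3=0$, so $\Psi_n(P)$ is nilpotent, whence $f(P)\Phi_n(P)=1-g(P)\Psi_n(P)$ is a unit and therefore $\Phi_n(P)$ is a unit. Combining with $\Phi_n(P)\Psi_n(P)=0$ gives $\Psi_n(P)=0$ on the nose. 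This is why the paper places \autoref{moduli_torsion} immediately after \autoref{moduli_generate} and says it follows ``using the formula in \autoref{univ_classic}'': both inputs are needed.
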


Moreover, using the observation that if $(\sh L, s_0, s_1, s_2) \in E(S)$ is non-zero in every geometric fibre of $E$ over $S$, then $s_2$ generates $\sh L$, we deduce the following.

\begin{cor}
  Let $S$ be a scheme, let $E$ be a smooth Weierstrass curve over $S$.
  Moreover, let $P \in E(S)$ and let $n$ be an integer.
  Then $nP$ is non-zero in every geometric fibre if and only $P$ is of the form $(a:b:1) \in E(S)$ with $a,b \in \sh O_S(S)$, and $\Psi_n(P) \in \sh O_S(S) \mg$.
\end{cor}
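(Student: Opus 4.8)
The plan is to deduce both implications from the explicit formula of \autoref{univ_classic}. To apply that formula to a point written as $P = (a:b:1)$ one needs the two sections $(\partial W/\partial x)(a,b,1) = a_1b - 3a^2 - 2a_2a - a_4$ and $(\partial W/\partial y)(a,b,1) = 2b + a_1a + a_3$ to generate $\sh O_S$. Since $E$ is smooth over $S$ we have $E\sm = E$, so by \autoref{const_functorofsmoothpoints} the three sections $(\partial W/\partial x)(a,b,1)$, $(\partial W/\partial y)(a,b,1)$, $(\partial W/\partial z)(a,b,1)$ generate $\sh O_S$; and by the Euler relation $x(\partial W/\partial x) + y(\partial W/\partial y) + z(\partial W/\partial z) = 3W$ evaluated at $(a,b,1)$, using $W(a,b,1) = 0$, the third lies in the ideal generated by the first two. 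Hence the first two generate $\sh O_S$ on their own, and \autoref{univ_classic} yields $nP = \pth*[big]{\Phi_n(a,b)\Psi_n(a,b) : \Omega_n(a,b) : \Psi_n^3(a,b)}$.

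For the ``only if'' direction I would first observe that if $nP$ is non-zero in every geometric fibre, then so is $P$: multiplication by $n$ is a group homomorphism, so it sends the zero section to the zero section, and therefore at any geometric point where $P$ degenerates to $0$ the point $nP$ does too. By the observation recalled just before the corollary, a point of $E(S)$ that is non-zero in every geometric fibre has last homogeneous coordinate generating the corresponding line bundle, so $P$ may be written as $(a:b:1)$ with $a, b \in \sh O_S(S)$. Applying the formula above, the last coordinate of $nP$ is $\Psi_n(a,b)^3 = \Psi_n(P)^3$; since the zero section of $E$ is set-theoretically the locus $z = 0$, the hypothesis that $nP$ is non-zero in every geometric fibre says exactly that $\Psi_n(P)^3$ is non-vanishing at every point of $S$, hence a unit, so $\Psi_n(P) \in \sh O_S(S)\mg$.

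For the ``if'' direction, suppose $P = (a:b:1)$ with $a, b \in \sh O_S(S)$ and $\Psi_n(P) \in \sh O_S(S)\mg$. The formula above applies again and shows that the last coordinate of $nP$ is the unit $\Psi_n(P)^3$; as the zero section is the vanishing locus of that coordinate, $nP$ is non-zero in every geometric fibre.

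I do not expect a real difficulty: once \autoref{univ_classic} is in hand the argument is essentially bookkeeping with the functor of points. The two points that need a little care are the verification that the two-generator hypothesis of \autoref{univ_classic} holds (rather than just the three-generator smoothness condition) --- handled by the Euler relation above --- and the translation between ``non-zero in every geometric fibre'' and ``the last homogeneous coordinate is a unit'', which uses that on a Weierstrass curve the zero section is cut out set-theoretically by $z = 0$ and that a global section of $\sh O_S$ is a unit precisely when it is non-vanishing at every point of $S$.
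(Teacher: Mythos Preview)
Your proof is correct and follows the same line as the paper's: reduce to $P=(a:b:1)$ via the observation that a section non-zero in every geometric fibre has its last coordinate generating the line bundle, then read off the last coordinate of $nP$ from \autoref{univ_classic}. The paper records this in one sentence; your write-up is simply more explicit, in particular your use of the Euler relation to pass from the three-generator smoothness condition to the two-generator hypothesis of \autoref{univ_classic}, which the paper leaves implicit.
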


This gives us an explicit way of expressing when a section $P$ of a smooth Weierstrass curve $E$ defines a $\bb Z/n \bb Z$-embedding.

\begin{cor} \label{moduli_embedding}
  Let $S$ be a scheme, let $E$ be a smooth Weierstrass curve over $S$.
  Moreover, let $P \in E(S)$ and let $n$ be an integer with $\abs n > 1$.
  Then $P$ is a $\bb Z/n\bb Z$-embedding if and only if $P$ is of the form $(a:b:1)$ with $a,b \in \sh O_S(S)$, $\Psi_n(P) = 0$, and for all divisors $d \neq n$ of $n$, $\Psi_d(P) \in \sh O_S(S) \mg$.
\end{cor}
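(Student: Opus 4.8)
The statement to prove is \autoref{moduli_embedding}: a section $P$ of a smooth Weierstrass curve $E/S$ is a $\bb Z/n\bb Z$-embedding if and only if $P = (a:b:1)$ with $a,b \in \sh O_S(S)$, $\Psi_n(P) = 0$, and $\Psi_d(P) \in \sh O_S(S)\mg$ for every divisor $d \neq n$ of $n$. The plan is to unwind the definition of $\bb Z/n\bb Z$-embedding into its two constituent conditions — namely $nP = 0$, and $P$ has exact order $n$ in every geometric fibre — and match each of these against the corollaries just established.

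First I would dispose of the $nP = 0$ half. By \autoref{moduli_torsion}, once we know $P$ is of the form $(a:b:1)$, the condition $nP = 0$ is equivalent to $\Psi_n(P) = 0$. But I must also argue that $nP = 0$ forces $P$ to be of that affine form in the first place: this is exactly the content of the corollary preceding \autoref{moduli_embedding}, applied not to $nP$ but to the relation between having exact order $n$ and being affine. More precisely, a $\bb Z/n\bb Z$-embedding has order $n \geq 2$ in every geometric fibre, hence is non-zero in every geometric fibre, hence — by the observation recorded just before \autoref{moduli_torsion} — has last coordinate $s_2$ generating $\sh L$, i.e.\ is of the form $(a:b:1)$. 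So the affineness of $P$ is automatic from the order condition and need not be assumed separately; conversely if $P$ is affine with $\Psi_n(P) = 0$ then $nP = 0$.

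Next I would handle the exact-order-$n$ condition. Granting that $P$ is affine, "$P$ has order exactly $n$ in every geometric fibre" means $nP = 0$ in every fibre (already handled) together with $dP \neq 0$ in every geometric fibre for every proper divisor $d \mid n$ with $d \neq n$. Applying \autoref{moduli_torsion} fibrewise — or rather the corollary characterising when $dP$ is non-zero in every geometric fibre — this last condition becomes: for every such $d$, $dP$ is non-zero in every geometric fibre, i.e.\ (since $P$ is already affine) $\Psi_d(P) \in \sh O_S(S)\mg$. Here I use that a section is non-zero in every geometric fibre precisely when its relevant coordinate is a global unit, and that $\Psi_d(P)$ is exactly the third coordinate of $dP$ up to the projective scaling coming from \autoref{univ_classic}. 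Conversely, if $\Psi_d(P)$ is a unit for all proper divisors $d \neq n$, then $dP \neq 0$ in every geometric fibre for all such $d$, and since the order of $P$ in a fibre divides $n$, it must equal $n$.

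The main subtlety — not a deep one, but the step that needs care — is the reduction of the "exact order $n$ in every geometric fibre" condition to divisibility only by the divisors $d \neq n$ of $n$, rather than to all $1 \le d < n$. This uses the elementary fact that if $dP = 0$ for some $1 \le d < n$, then $d'P = 0$ for some maximal proper divisor $d'$ of $n$ (namely $d' = n/p$ for a prime $p$ with $d \mid n/p$), so it suffices to test the divisors of $n$; and then \autoref{moduli_torsion} converts $d'P \neq 0$ (fibrewise) into $\Psi_{d'}(P)$ being a fibrewise-nonzero, hence (being a global section) a unit. One should also note that $\Psi_1 = 1$ is a unit trivially, so the divisor $d = 1$ imposes no condition, and the case $|n| > 1$ in the hypothesis guarantees $n$ itself has at least one proper divisor, so the statement is not vacuous. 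Assembling these equivalences in both directions completes the proof.
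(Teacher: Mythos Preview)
Your proposal is correct and matches the paper's intended argument: the corollary is stated without proof because it follows immediately from \autoref{moduli_torsion} and the preceding unnamed corollary about $dP$ being non-zero in every geometric fibre, once one notes that $nP=0$ forces the fibrewise order of $P$ to divide $n$. The only quibble is your parenthetical ``$d \mid n/p$'', which should read ``$\gcd(d,n) \mid n/p$'' since an arbitrary $1\le d<n$ with $dP=0$ need not divide $n$; this does not affect the argument.
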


If we invert $n$ in $\uni R$, we can show more.
First, we show the following.

\begin{lem} \label{moduli_divide}
  Let $n \in \bb Z - \set{0}$, and let $d$ be a divisor of $n$.
  Then $\Psi_d \mid \Psi_n$ in $\uni R[X,Y]/(\uni W')$.
\end{lem}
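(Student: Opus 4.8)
The statement to prove is: for $n \in \bb Z - \set 0$ and $d \mid n$, we have $\Psi_d \mid \Psi_n$ in $\uni R[X,Y]/(\uni W')$. The plan is to work in the UFD $\uni R[X,Y]/(\uni W')$ (which is a UFD since $\uni R$ is a UFD and $\uni W'$ is monic in $Y$... actually one should be careful here and instead argue via primality of the ideal $(\Psi_d)$, or via a divisor-theoretic argument on the universal smooth Weierstrass curve). The cleanest route is geometric: interpret $\Psi_d$ and $\Psi_n$ as functions on $\uni E$ (or rather, as regular functions on the affine curve $\Spec \uni R[X,Y]/(\uni W')$) and compare their zero loci with multiplicity, using the torsion interpretation already established in \autoref{moduli_torsion}.

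First I would reduce to showing that $\Psi_n / \Psi_d$, a priori an element of the fraction field of $\uni R[X,Y]/(\uni W')$, is in fact integral over $\uni R[X,Y]/(\uni W')$, and then (since that ring is integrally closed — it is regular, being smooth over the regular ring $\uni R$) conclude it lies in the ring itself. To see integrality, I would pass to the function field $K(E)$ of the generic-fibre curve $E/K$ from \autoref{const_sect_generic} and examine divisors: the divisor of $\Psi_n$ is $\sum_{P \in E[n] - 0} \div P - (n^2 - 1)\div 0$, and that of $\Psi_d$ is $\sum_{P \in E[d] - 0}\div P - (d^2-1)\div 0$. Since $d \mid n$ we have $E[d] \subseteq E[n]$, so $\div(\Psi_n) - \div(\Psi_d) = \sum_{P \in E[n] - E[d]} \div P - (n^2 - d^2)\div 0$, which is effective away from $0$; hence $\Psi_n/\Psi_d$ is regular on $E - 0$, i.e. has no poles at finite points. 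But $\uni R[X,Y]/(\uni W')$ is the coordinate ring of $\uni E$ minus its zero section, and the finite points of each geometric fibre are exactly the points where $z$ (equivalently, where $X,Y$ are finite); so "no poles at finite points on the generic fibre" must be promoted to "no poles at finite points, fibrewise, over all of $\uni R$".

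To make that promotion I would argue prime-by-prime: $\uni R[X,Y]/(\uni W')$ is a two-dimensional regular (hence normal) domain, so it suffices to check that $v(\Psi_n/\Psi_d) \geq 0$ for every height-one prime $v$. The height-one primes either lie over a height-one prime of $\uni R$ — over which the whole curve degenerates, but $\Psi_n$, $\Psi_d$ both specialise to the division polynomials of the corresponding specialised smooth Weierstrass curve (still smooth, since $\Delta$ is a unit in $\uni R$), and the same divisor comparison via \autoref{moduli_torsion} applies on that fibre — or they correspond to prime divisors on the geometric generic fibre, handled by the $K(E)$ computation above. The main obstacle, and the step that needs care, is precisely this last promotion: checking that the vertical divisors (those pulled back from $\Spec \uni R$) contribute no pole, which amounts to the statement that $\Psi_d$ does not vanish identically on any fibre — equivalently that the leading coefficient $\Lambda \Psi_d = d$ together with the $X$-degree bounds force $\Psi_d$ to be a nonzerodivisor modulo every prime of $\uni R$. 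Once that is in hand, normality gives $\Psi_n/\Psi_d \in \uni R[X,Y]/(\uni W')$, which is the claim.
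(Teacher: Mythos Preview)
Your approach via normality and divisor comparison is quite different from the paper's, and considerably more involved. The paper gives a four-line argument: set $R = \uni R[X,Y]/(\uni W',\Psi_d)$, let $P=(X:Y:1)$ be the tautological point on the induced smooth Weierstrass curve over $R$; then $\Psi_d(P)=0$ by construction, so \autoref{moduli_torsion} gives $dP=0$, hence $nP=0$, hence (again by \autoref{moduli_torsion}) $\Psi_n(P)=0$ in $R$, i.e.\ $\Psi_d \mid \Psi_n$. This sidesteps all questions of regularity, height-one primes, and fibrewise nonvanishing: it is really just the scheme-theoretic inclusion $\uni E[d] \subseteq \uni E[n]$, read off on the affine chart via \autoref{moduli_torsion}.

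Your route can be made to work, but as written it has a genuine gap. At a height-one prime of $\uni R[X,Y]/(\uni W')$ lying over a height-one prime $\mathfrak q$ of $\uni R$ (the generic point of that fibre), you need $v(\Psi_d)=0$, i.e.\ that $\Psi_d$ is not identically zero on the fibre. Your justification via $\Lambda\Psi_d = d$ fails precisely when $\mathfrak q$ has residue characteristic $p$ with $p \mid d$: the leading coefficient then vanishes, and nothing you have said rules out the whole polynomial collapsing. (It does not collapse --- for instance because specializing further to any elliptic curve over an algebraically closed field of characteristic $p$ still leaves $\Psi_d$ nonzero, as not every point is $d$-torsion --- but this requires an extra argument.) Two minor points: the ring $\uni R[X,Y]/(\uni W')$ is not two-dimensional (it has relative dimension $1$ over the six-dimensional $\uni R$), though only regularity matters for your argument; and your opening remark that it is a UFD because $\uni W'$ is monic in $Y$ is not justified (monicity gives freeness, not factoriality), though you rightly abandon that line.
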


\begin{proof}
  Let $R = \uni R[X,Y]/(\uni W', \Psi_d)$, and let $E$ be the smooth Weierstrass curve corresponding to the canonical morphism $\uni R \to R$.
  Let $P = (X:Y:1) \in E(R)$.
  Again, note that the image of any $f \in \uni R[X,Y]/(\uni W')$ in $k$ is $f(P)$.
  By \autoref{moduli_torsion}, we now see that $dP = 0$ in $E(R)$, hence $nP = 0$, so $\Psi_n = 0$ in $R$.
  We deduce that $\Psi_d \mid \Psi_n$ in $\uni R[X,Y]/(\uni W')$.
\end{proof}

In the same way as in \autoref{moduli_generate}, we now prove the following proposition.

\begin{prop}
  Let $n \in \bb Z - \set{0}$, and let $d$ be a divisor of $n$.
  Then $\Psi_d$ and $\Psi_n/\Psi_d$ generate $\uni R[1/n,X,Y]/(\uni W')$ as an ideal.
\end{prop}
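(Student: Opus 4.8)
The plan is to mirror the proof of \autoref{moduli_generate}. By the preceding lemma $\Psi_d \mid \Psi_n$ in $\uni R[X,Y]/(\uni W')$, so $\Psi_n/\Psi_d$ is a well-defined element there; writing $R = \uni R[1/n,X,Y]/(\uni W',\Psi_d)$, it then suffices to show that the image of $\Psi_n/\Psi_d$ in $R$ is invertible, or equivalently that it lies in no maximal ideal of $R$.

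So let $\ideal m$ be a maximal ideal of $R$, let $k$ be an algebraic closure of $R/\ideal m$, let $E$ be the smooth Weierstrass curve over $k$ corresponding to the canonical morphism $\uni R[1/n] \to k$ (so $n \in k\mg$), and let $P = (X:Y:1) \in E(k)$, so that the image in $k$ of any $f \in \uni R[X,Y]/(\uni W')$ equals $f(P)$. Since $\Psi_d$ maps to $0$ in $R$ we have $\Psi_d(P) = 0$, hence $dP = 0$ by \autoref{moduli_torsion}; as $d \mid n$ this also gives $P \in E[n](k)$, and $P \neq 0$ because $P$ is an affine point. It now suffices to show $(\Psi_n/\Psi_d)(P) \neq 0$, for then $\Psi_n/\Psi_d \notin \ideal m$.

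To that end, observe that since $n$, and hence $d$, is invertible in $k$, the groups $E[n](k)$ and $E[d](k)$ are reduced with exactly $n^2$ and $d^2$ elements, all affine except the origin. Moreover $\Psi_n$ is regular on the affine curve $E - 0 = \Spec k[X,Y]/(\uni W')$ with a single pole, at $0$, of order $n^2 - 1$: viewed in $\uni A[X,Y]/(\uni W')$ the function $\Psi_n$ has $\ord[0]\Psi_n = -(n^2-1)$ with leading coefficient the integer $n$, which stays a unit in $k$, so its order at $0$ is unchanged after reduction. On the smooth projective curve $E$ the degrees of the zero divisor and the pole divisor of $\Psi_n$ agree, so, since its zero locus is $E[n] - 0$ by \autoref{moduli_torsion} and this set has $n^2 - 1$ elements, $\Psi_n$ has a simple zero at each point of $E[n] - 0$ and no other zeros; the same holds for $\Psi_d$ with $d$ in place of $n$. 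As $E[d] - 0 \subseteq E[n] - 0$, the divisor of $\Psi_n/\Psi_d$ is therefore supported, away from $0$, exactly on $E[n] - E[d]$. The point $P$ is a nonzero $d$-torsion point, so it lies neither in $E[n] - E[d]$ nor at $0$, whence $\Psi_n/\Psi_d$ is a unit in the local ring of $E$ at $P$; in particular $(\Psi_n/\Psi_d)(P) \neq 0$, as wanted.

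The one step that needs care is the claim that $\ord[0]\Psi_n = -(n^2-1)$, and likewise $\ord[0]\Psi_d = -(d^2-1)$, is preserved under reduction to $k$, i.e.\ that no cancellation occurs in the top-degree part of $\Psi_n$; this is exactly where invertibility of $n$ enters. Granting this, together with \autoref{moduli_torsion} and the preceding lemma, the remainder of the argument is formal.
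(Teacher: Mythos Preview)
Your proof is correct and follows essentially the same route as the paper: reduce to a maximal ideal, pass to an algebraically closed residue field $k$, and show that $(\Psi_n/\Psi_d)(P)\neq 0$ for the resulting nonzero $d$-torsion point $P$ by computing the divisor of $\Psi_n/\Psi_d$ on $E$. The paper simply asserts the divisor formula $\fdiv \Psi_i = \sum_{Q\in E[i]-0}\div Q - (i^2-1)\div 0$ for $i$ invertible in $k$, whereas you spell out why it holds (the leading coefficient $n$ survives in $k\mg$, so the pole order is $n^2-1$, and a cardinality count forces all zeros to be simple); this is exactly the content behind the paper's one-line claim, and your final paragraph correctly identifies it as the place where invertibility of $n$ is used.
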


\begin{proof}
  Let $R = \uni R[1/n,X,Y]/(\uni W',\Psi_d)$.
  We show that $\Psi_n / \Psi_d$ is invertible in $R$, or equivalently, that $\Psi_n / \Psi_d$ does not lie in any maximal ideal of $R$.
  So let $\ideal m$ be a maximal ideal of $R$, and let $k$ be an algebraic closure of $R/\ideal m$.
  Let $E$ be the smooth Weierstrass curve over $k$ corresponding to the canonical morphism $\uni R \to k$, and let $P = (X:Y:1) \in E(k)$.

  Note that $\fdiv \Psi_i = \sum_{P \in E[i]-0} \div P - (i^2-1) \div 0$ for all non-zero integers $i$ invertible in $k$, so $\fdiv \Psi_n / \Psi_d = \sum_{P \in E[n] - E[d]} \div P - (n^2-d^2) \div 0$.
  As $\Psi_d(P) = 0$, it now follows that $(\Psi_n / \Psi_d)(P) \neq 0$ in $k$.
  We deduce that $\Psi_n / \Psi_d$ is not in any maximal ideal of $R$, as desired.
\end{proof}

If we then define the elements $F_n \in \uni R[X,Y]/(\uni W')$ recursively by
\[
  F_n = \left\{
    \begin{array}{ll}
      1 & \text{if }n = 1\\
      \Psi_n \prod_{d\mid n, 0<d<n} F_d^{-1} & \text{if }n \geq 2
    \end{array}
    \right.,
  \]
  then we have the following consequence of \autoref{moduli_embedding}.

  \begin{cor} \label{moduli_embedding_2}
    Let $S$ be a scheme, let $E$ be a Weierstrass curve over $S$.
    Moreover, let $P \in E(S)$ and let $n$ be a non-zero integer that is invertible in $S$.
    Then $P$ is a $\bb Z/n\bb Z$-embedding if and only if $P$ is of the form $(a:b:1)$ with $a,b \in \sh O_S(S)$ and $F_n(P) = 0$.
  \end{cor}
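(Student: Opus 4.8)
The plan is to reduce the statement to \autoref{moduli_embedding} by means of the factorisation
\[
  \Psi_n = \prod_{0 < d \mid n} F_d \qquad\text{in } \uni R[X,Y]/(\uni W'),
\]
which is just the defining equation for $F_n$ rearranged (and a genuine identity there, since each $F_d$ is already known to lie in $\uni R[X,Y]/(\uni W')$; applying it with $d$ in place of $n$ also shows $F_d \mid \Psi_d$ for every positive divisor $d$ of $n$). From this I would first extract the one extra ingredient needed beyond the factorisation: for each positive $d \mid n$ with $d < n$, the elements $F_d$ and $F_n$ generate the unit ideal in $R_n := \uni R[1/n,X,Y]/(\uni W')$. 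This is immediate: $F_d \mid \Psi_d$, and $F_n \mid \Psi_n/\Psi_d$ — the latter because $\Psi_n/\Psi_d$ is the product of the $F_e$ over the divisors $e$ of $n$ that do not divide $d$, one of which is $e = n$ — so $(F_d) + (F_n) \supseteq (\Psi_d) + (\Psi_n/\Psi_d)$, which equals $R_n$ by the preceding proposition. (The same argument, with the gcd and lcm in place of $d$ and $n$, shows the $F_d$ for $d\mid n$ are pairwise comaximal in $R_n$, though only the special case above is used.)

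Next, let $P = (a:b:1) \in E(S)$, where we take $E$ to be smooth, as is needed both for the notion of $\bb Z/n\bb Z$-embedding and for \autoref{moduli_embedding} to apply. Since $n$ is invertible in $\sh O_S(S)$, the point $P$ induces a ring homomorphism $R_n \to \sh O_S(S)$, through which the factorisation becomes $\Psi_e(P) = \prod_{c \mid e} F_c(P)$ for every positive $e \mid n$, and the comaximality just proved descends to comaximality of $F_d(P)$ and $F_n(P)$ in $\sh O_S(S)$ for $d \mid n$, $0 < d < n$. Reading the factorisation, the condition ``$\Psi_e(P) \in \sh O_S(S)\mg$ for every positive $e \mid n$ with $e < n$'' is equivalent to ``$F_d(P) \in \sh O_S(S)\mg$ for every positive $d \mid n$ with $d < n$'' (a product of units is a unit; conversely $F_d(P)$ divides $\Psi_d(P)$). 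Granting this common condition, $\Psi_n(P) = F_n(P) \cdot \prod_{d \mid n,\, 0 < d < n} F_d(P)$ vanishes if and only if $F_n(P)$ does; conversely, if $F_n(P) = 0$ then comaximality forces each $F_d(P)$ with $d < n$ to be a unit, whence also $\Psi_n(P) = 0$. Combining, $F_n(P) = 0$ is equivalent to the conjunction ``$\Psi_n(P) = 0$ and $\Psi_e(P) \in \sh O_S(S)\mg$ for every positive $e \mid n$ with $e < n$'', which by \autoref{moduli_embedding} — valid for $\abs n > 1$, the only case of interest here — is exactly the condition that $P$ be a $\bb Z/n\bb Z$-embedding. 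This finishes the proof.

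The argument is essentially formal and there is no serious obstacle; the points needing care are ordering the chain of equivalences in the second paragraph correctly — in particular, noticing that the implication ``$F_n(P) = 0 \Rightarrow P$ is a $\bb Z/n\bb Z$-embedding'' genuinely uses the comaximality of $F_n$ with the lower $F_d$, and not merely the factorisation — and checking that the comaximality proved over $R_n$ really does transfer to $\sh O_S(S)$, which is the one place the invertibility of $n$ enters. One should also keep in mind, as in \autoref{moduli_embedding}, that the statement is intended for $\abs n > 1$ (for $n = 1$ the zero section is a $\bb Z/1\bb Z$-embedding which is not of the form $(a:b:1)$).
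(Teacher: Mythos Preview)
Your proposal is correct and is precisely the argument the paper has in mind: the paper states the corollary as a direct consequence of \autoref{moduli_embedding} (together with the preceding proposition on comaximality of $\Psi_d$ and $\Psi_n/\Psi_d$ and the definition of the $F_d$), without writing out the details you supply. Your observation that the comaximality of $F_n$ with the lower $F_d$ is genuinely needed for the backward implication, and your remarks on smoothness of $E$ and on the case $\lvert n\rvert = 1$, are both apt clarifications of points the paper leaves implicit.
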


  \subsection{The Tate normal form and representability of $\uni P(n)$}

  We recall the following well-known theorem, and its proof.

  \begin{thm}[Tate normal form]
    Let $S$ be a scheme, let $E$ be an elliptic curve over $S$, and let $P \in E(S)$ be such that $2P \neq 0$ and $3P \neq 0$ in every geometric fibre of $E$ over $S$.
    (For example, if $P$ is a $\bb Z/n \bb Z$-embedding with $n \geq 4$.)
    Then there exist unique $s \in \sh O_S(S)$, $t \in \sh O_S(S)\mg$ such that there exists a unique $S$-isomorphism from $E$ over $S$ to the smooth Weierstrass curve $C$ given by the Weierstrass polynomial
    \[
      y^2z + (1+s)xyz + tyz^2 - x^3 - tx^2z,
    \]
    identifying $P \in E(S)$ and $(0:0:1) \in C(S)$.
  \end{thm}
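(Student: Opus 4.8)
The plan is to reduce to the case where $E$ is a smooth Weierstrass curve, then to bring $E$ into Tate normal form by four explicit coordinate changes, using the hypotheses $2P\neq 0$ and $3P\neq 0$ to force two coefficients to become units; uniqueness will come from the classification of origin-preserving isomorphisms of Weierstrass curves. Since the whole statement is local on $S$ and uniqueness will be established, it suffices to construct a triple $(s,t,\phi)$ Zariski-locally on $S$ and then glue. So I may assume $S=\operatorname{Spec} R$ is affine and, by the theorem quoted just above, that $E$ is a smooth Weierstrass curve over $R$ with zero section $(0:1:0)$. As $2P\neq 0$ in every geometric fibre, $P$ is nowhere the zero section, so by the observation that a section of $E$ which is nonzero in every geometric fibre is of the form $(a:b:1)$ we may write $P=(a:b:1)$ with $a,b\in R$. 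I will use throughout the elementary fact that an element of $R$ lying in no prime ideal is a unit; in particular a coefficient that is nonzero in every geometric fibre of $E$ automatically lies in $R^\times$.

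For existence, first translate by $(x,y)\mapsto(x+a,y+b)$ to move $P$ to $(0:0:1)$; the resulting equation has $a_6=0$. On a Weierstrass curve the negative of $(0:0:1)$ is $(0:-a_3:1)$, so in every geometric fibre $2P=0$ if and only if $a_3=0$; hence $a_3\in R^\times$, and the shear $(x,y)\mapsto(x,\,y+(a_4/a_3)x)$ is defined, fixes $(0:0:1)$, and kills the coefficient of $x$, yielding $y^2+a_1'xy+a_3y=x^3+a_2'x^2$. The tangent line to this curve at $(0:0:1)$ is $y=0$, which meets the curve again at $(-a_2':0:1)=-2P$; thus in every geometric fibre $3P=0$ if and only if $a_2'=0$, so $a_2'\in R^\times$. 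Finally rescale by $(x,y)\mapsto(u^2x,u^3y)$ with $u=a_3/a_2'\in R^\times$: this fixes $(0:0:1)$ and replaces $(a_1',a_2',a_3)$ by $(a_1'/u,\,a_2'/u^2,\,a_3/u^3)$, where $a_2'/u^2=a_3/u^3=:t\in R^\times$ since $a_2',a_3,u\in R^\times$. Setting $s=a_1'/u-1$, the composite of these four substitutions — each of which extends to an isomorphism of elliptic curves preserving $(0:1:0)$ — is an isomorphism $\phi$ from $E$ onto the Tate curve $C$ with parameters $(s,t)$ that carries $P$ to $(0:0:1)$, and $C$ is smooth because $E$ is.

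For uniqueness, suppose $(s,t,\phi)$ and $(s',t',\phi')$ both satisfy the conclusion; then $\psi=\phi'\circ\phi^{-1}$ is an isomorphism of elliptic curves between the two Tate curves sending $(0:0:1)$ to $(0:0:1)$. Every origin-preserving isomorphism of Weierstrass curves over $R$ is a substitution $x=u^2\tilde x+r$, $y=u^3\tilde y+u^2\sigma\tilde x+w$ with $u\in R^\times$ and $r,\sigma,w\in R$ (the standard classification, which over a base may be deduced from $\mathcal{O}_C(1)$ being the complete linear system attached to three times the zero section, together with the orders of vanishing of $x$, $y$, $z$ there). The condition $(0:0:1)\mapsto(0:0:1)$ forces $r=w=0$, and then the requirement that both equations be in Tate normal form forces, via the usual transformation formulae, $\sigma=0$, $u=1$ and $(s',t')=(s,t)$; hence $\psi$ is the identity and $\phi'=\phi$. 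Finally the locally constructed data agree on overlaps by this uniqueness, so they glue to a global triple. I expect the classification of origin-preserving isomorphisms over a general base ring to be the step requiring the most care; the remainder is the classical coordinate computation, enhanced only by the observation that a coefficient nonzero on every geometric fibre is automatically a unit.
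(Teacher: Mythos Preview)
Your proof is correct and follows essentially the same route as the paper's: reduce to an affine base with $E$ a Weierstrass curve, translate $P$ to $(0:0:1)$, use $2P\neq 0$ to invert $a_3$ and shear away $a_4$, use $3P\neq 0$ to invert $a_2$ and rescale, then deduce uniqueness from the classification of Weierstrass isomorphisms. Your account is in fact slightly more detailed than the paper's in justifying why $a_3$ and $a_2'$ become units (via the explicit description of $-P$ and the tangent-line intersection), while the paper is more explicit about which coefficients pin down $u$, $\sigma$ in the uniqueness step; but the arguments are the same. (A trivial slip: you announce ``four'' coordinate changes but carry out three.)
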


  \begin{proof}
    We work affine locally on $S$.
    Since we require uniqueness of $s$ and $t$ and of the isomorphism, we can glue the obtained local isomorphisms afterwards to get the desired isomorphism.
    Hence we may assume that $E/S$ is in fact a smooth Weierstrass curve, and that $S = \Spec R$ for some ring $R$.

    For the existence, suppose that $E$ is defined by the Weierstrass equation
    \[
      y^2z + a_1xyz + a_3yz^2 - x^3 - a_2x^2z - a_4xz^2 - a_6z^3.
    \]
    Then note that $P \neq 0$ on all geometric fibres, so $P$ is of the form $(x:y:1)$.
    So after a suitable translation, we may assume that $P = (0:0:1)$ and hence $a_6 = 0$.
    As $2P \neq 0$ on all geometric fibres, it then follows that $a_3 \in R\mg$.
    Hence after composing with a suitable shearing parallel to the line $y = 0$, we may assume that $a_4$ is zero.
    Finally, as $3P \neq 0$ on all geometric fibres, we have $a_2 \in R\mg$, hence after a suitable scaling, we find an isomorphism between $E$ and a Weierstrass curve $C$ over $S$ of the desired form.

    For uniqueness, note that by \cite[Sect.~2.2]{katzmazur}, any isomorphism between Weierstrass curves over $S$ corresponds to
    \[
      \pth*[big]{\sh O(1),\alpha^2x+az,\alpha^3y+bx+cz,z} \in \bb P^2_S(\bb P^2_S)
    \]
    on the ambient $\bb P^2_S$ with $\alpha \in R\mg,\ a,b,c \in R$.
    Let $s,s' \in R$, $t,t' \in R\mg$, and let $C$, $C'$ be the Weierstrass curves over $S$ defined by the polynomials
    \[
      y^2z + (1+s)xyz + tyz^2 - x^3 - tx^2z
      \qquad\text{and}\qquad
      y^2z + (1+s')xyz + t'yz^2 - x^3 - t'x^2z,
    \]
    respectively.
    Suppose that $i \colon C \to C'$ is an isomorphism sending $(0:0:1)$ to $(0:0:1)$ and that it is given on the ambient $\bb P^2_S$ by
    \[
      \pth*[big]{\sh O(1),\alpha^2x+az,\alpha^3y+bx+cz,z} \in \bb P^2_S(\bb P^2_S)
    \]
    for $\alpha \in \sh O_S(S)\mg$, $a,b,c \in \sh O_S(S)$.
    Then $a = c = 0$ as $i(0:0:1) = (0:0:1)$, $\alpha = 1$ as the $yz^2$- and $x^2z$-coefficients sum to $0$, and $b = 0$ as the $xz^2$-coefficient is zero.
    Hence $s = s'$, $t = t'$, and $i$ is the identity, which shows the uniqueness.
  \end{proof}

  In order to state the corollaries, we introduce some more notation.
  Let $\delta = \Delta(1+s,t,t,0,0) \in \uni S$.
  Moreover, for any integer $n$, let $\psi_n = \Psi_n(0,0) \in \bb Z[s,t]$ and $f_n = F_n(0,0) \in \bb Z[s,t]$.
  Now we have the following consequences of \autoref{moduli_embedding} and \autoref{moduli_embedding_2}.

  \begin{cor}
    Let $n$ be a positive integer.
    The moduli problem $\uni P(n)$ is representable (over $\bb Z$) by 
    \[
      Y_1(n)^{\text{naive}} = \Spec \bb Z[s,t,\delta^{-1},p_n^{-1}]/(\psi_n).
    \]
    Here, $p_n = \prod_{d\mid n, 0<d<n} \psi_d$.
    The universal elliptic curve over $Y_1(n)^{\text{naive}}$ is the Weierstrass curve given by the Weierstrass polynomial
    \[
      y^2z + (1+s)xyz + tyz^2 - x^3 - tx^2z,
    \]
    together with the point $P = (0:0:1)$.
  \end{cor}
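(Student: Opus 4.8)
The plan is to read the corollary off the Tate normal form theorem together with \autoref{moduli_embedding}: between them these already isolate exactly the relevant data, and what remains is to assemble a universal object. Recall, following \cite{katzmazur}, that $\col P(n)$ is representable by a scheme $M$ when $M$ carries an elliptic curve $\sh E/M$ and a point $\varepsilon_0 \in \col P(n)(\sh E/M)$ such that, for every $(E/S)$ in $\Ell$ and every $\varepsilon \in \col P(n)(E/S)$, there is a unique morphism $\sigma \colon S \to M$ together with an isomorphism $E \cong \sh E \times_M S$ carrying $\varepsilon_0$ to $\varepsilon$. We are in the standing situation $n \ge 4$ of this section, which guarantees that a $\bb Z/n\bb Z$-embedding $P$ has exact order $n$ in every geometric fibre, so that $P$, $2P$ and $3P$ are all nonzero in every geometric fibre and the Tate normal form theorem applies to $(E,P)$.

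\emph{The candidate.} I would take $M = Y_1(n)^{\text{naive}}$, let $\sh E$ be the Weierstrass curve over $M$ defined by the Tate normal form polynomial of the preceding theorem with its variables $s,t$ read as elements of $\sh O_M(M)$ (base change of Weierstrass curves along ring maps is harmless, cf.\ \autoref{const_univ}), and set $\varepsilon_0 = (0:0:1) \in \sh E(M)$. One must check that $(\sh E, \varepsilon_0)$ lies in $\col P(n)$: the discriminant of $\sh E$ is the image of $\delta$, a unit on $M$ by construction, so $\sh E$ is a smooth Weierstrass curve, hence an elliptic curve; and $\psi_n = \Psi_n(0,0)$ vanishes on $M$, while for each divisor $d$ of $n$ with $0 < d < n$ the element $\psi_d = \Psi_d(0,0)$ divides the unit $p_n$ and is therefore a unit, so \autoref{moduli_embedding} shows that $(0:0:1)$ is a $\bb Z/n\bb Z$-embedding into $\sh E$.

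\emph{The universal property.} Given $(E/S)$ and a $\bb Z/n\bb Z$-embedding $P = \varepsilon$, the Tate normal form theorem supplies unique $\tilde s \in \sh O_S(S)$, $\tilde t \in \sh O_S(S)\mg$ and a unique $S$-isomorphism $\iota$ from $E$ to the smooth Weierstrass curve $C_{\tilde s, \tilde t}$ in Tate normal form with parameters $(\tilde s, \tilde t)$, carrying $P$ to $(0:0:1)$. The pair $(\tilde s, \tilde t)$ defines a morphism $S \to \Spec \bb Z[s,t]$; I claim it factors uniquely through $M$. Indeed, the image of $\delta$ in $\sh O_S(S)$ is the discriminant of the smooth curve $C_{\tilde s, \tilde t} \cong E$, hence a unit; and transporting $P$ along $\iota$, \autoref{moduli_embedding} applied to the $\bb Z/n\bb Z$-embedding $(0:0:1)$ of $C_{\tilde s, \tilde t}$ shows that the image of $\psi_n$ in $\sh O_S(S)$ is $0$ and the image of each $\psi_d$ (for $d \mid n$, $0 < d < n$), hence of $p_n$, is a unit. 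This yields a unique $\sigma \colon S \to M$; by the base-change property of Weierstrass curves $\sh E \times_M S = C_{\tilde s, \tilde t}$, and composing this identification with $\iota$ gives the isomorphism $E \cong \sh E \times_M S$ carrying $\varepsilon_0 = (0:0:1)$ to $\varepsilon = P$. Uniqueness of $\sigma$ and of this isomorphism follows from the uniqueness clauses in the Tate normal form theorem: any competing datum would, after the identification $\sh E \times_M S = C_{\sigma^*s, \sigma^*t}$, produce a pair $(\sigma^*s, \sigma^*t)$ and an $S$-isomorphism $E \cong C_{\sigma^*s, \sigma^*t}$ of the kind the theorem declares unique, forcing $(\sigma^*s, \sigma^*t) = (\tilde s, \tilde t)$ and the isomorphism to be $\iota$.

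The one piece of genuine bookkeeping is the passage from $\Spec \bb Z[s,t]$ to its localisation-and-quotient $M$: one must see that the three conditions ``$\delta$ invertible'', ``$p_n$ invertible'' and ``$\psi_n = 0$'' cut out precisely the locus over which $(C_{s,t},(0:0:1))$ is an elliptic curve equipped with a $\bb Z/n\bb Z$-embedding at $(0:0:1)$ — but that is exactly what \autoref{moduli_embedding} says, and since its criterion involves no inverse of $n$, none is needed here either (in contrast to the companion description via $F_n$ and $f_n$). Beyond this I do not expect any real obstacle: the substance was already carried by the Tate normal form theorem and by \autoref{moduli_embedding}, and this corollary is their assembly.
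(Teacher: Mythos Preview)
Your proposal is correct and follows exactly the route the paper intends: the corollary is stated as an immediate consequence of the Tate normal form theorem together with \autoref{moduli_embedding}, and you have simply (and carefully) spelled out that assembly, including the verification that the universal object lies in $\col P(n)$ and the use of the uniqueness clause in the Tate normal form theorem to establish uniqueness of both $\sigma$ and the isomorphism.
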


  \begin{cor}
    Let $n$ be a positive integer.
    The moduli problem $\uni P(n)$ is representable over $\bb Z[1/n]$ by 
    \[
      Y_1(n)_{\bb Z[1/n]} = \Spec \bb Z[s,t,\delta^{-1},\tfrac1n]/(f_n).
    \]
    The universal elliptic curve over $Y_1(n)$ is the Weierstrass curve given by the Weierstrass polynomial
    \[
      y^2z + (1+s)xyz + tyz^2 - x^3 - tx^2z,
    \]
    together with the point $P = (0:0:1)$.
  \end{cor}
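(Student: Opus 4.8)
The plan is to reduce every object of $\uni P(n)$ over a $\bb Z[1/n]$-scheme to the Tate normal form and then to recognise the resulting data as a point of the displayed affine scheme. Throughout I take $n \geq 4$, as in the rest of this section, so that the Tate normal form theorem above applies: if $P$ is a $\bb Z/n\bb Z$-embedding into $E/S$, then $P$ has exact order $n$ in every geometric fibre, so $2P \neq 0$ and $3P \neq 0$ there, and hence $(E,P)$ is, \emph{via a unique $S$-isomorphism}, of the form $(C,(0:0:1))$ for a \emph{unique} pair $(s,t)$ with $s \in \sh O_S(S)$ and $t \in \sh O_S(S)\mg$, where $C$ is the corresponding Tate-normal-form Weierstrass curve. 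It is this double uniqueness, already supplied by the Tate normal form theorem, that makes $\uni P(n)$ rigid enough to be representable by a scheme; the rest is bookkeeping.

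Next I would record the two extra conditions on the pair $(s,t)$. On one hand, $C$ must be smooth over $S$, which by the smoothness criterion for Weierstrass curves of \autoref{const_sect_defn} means precisely that its discriminant $\delta \in \sh O_S(S)$ is a unit; moreover a glance at the discriminant formula shows $t \mid \delta$ in $\bb Z[s,t]$, so $\delta \in \sh O_S(S)\mg$ already forces $t \in \sh O_S(S)\mg$, and therefore inverting $\delta$ alone is enough to capture the Tate-normal-form data. On the other hand, $(0:0:1)$ must be a $\bb Z/n\bb Z$-embedding on $C$; since $n$ is invertible on $S$ and $(0:0:1)$ is of the form $(a:b:1)$, \autoref{moduli_embedding_2} says this holds if and only if $f_n$, which is $F_n$ evaluated at $(0:0:1)$ on the Tate curve, vanishes in $\sh O_S(S)$.

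Then I would assemble these into the representability statement. For a $\bb Z[1/n]$-scheme $S$ --- which we may take affine, say $S = \Spec R$, the general case following by gluing --- an object of $\uni P(n)$ over $S$, i.e.~an elliptic curve over $S$ with a $\bb Z/n\bb Z$-embedding considered up to the unique isomorphism above, is the same as a pair $(s,t) \in R^2$ with $\delta(s,t) \in R\mg$ and $f_n(s,t) = 0$, and this in turn is exactly the data of a $\bb Z[1/n]$-algebra homomorphism $\bb Z[s,t,\delta^{-1},\tfrac1n]/(f_n) \to R$, that is, of a $\bb Z[1/n]$-morphism $S \to Y_1(n)_{\bb Z[1/n]}$. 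These identifications are natural in $S$, so $\uni P(n)$ is representable over $\bb Z[1/n]$ by $Y_1(n)_{\bb Z[1/n]}$; the universal object is the one corresponding to the identity morphism, namely the displayed Tate curve over $Y_1(n)_{\bb Z[1/n]}$ --- smooth, hence an elliptic curve, as $\delta$ is a unit there --- together with $P = (0:0:1)$.

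As for where the difficulty sits: there is essentially no new obstacle, since all of the substance is already in the Tate normal form theorem (existence and, crucially, uniqueness of $(s,t)$ and of the normalising isomorphism) and in \autoref{moduli_embedding_2} (the translation of ``$\bb Z/n\bb Z$-embedding'' into the single equation $f_n = 0$ once $n$ is inverted). The only points needing a little care are the observation that inverting $\delta$ already forces $t$ into $\sh O_S(S)\mg$, so that the coordinate ring takes the clean displayed form, and the matching of the Katz--Mazur notion of representability --- with its demand for a \emph{unique} classifying morphism --- against the concrete functor of points; both are routine given the inputs.
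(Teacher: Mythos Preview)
Your proposal is correct and follows exactly the route the paper intends: the corollary is stated without proof as an immediate consequence of the Tate normal form theorem and \autoref{moduli_embedding_2}, and you have simply written out the implied argument. Your observation that $t \mid \delta$ in $\bb Z[s,t]$ (in fact $t^3 \mid \delta$), so that inverting $\delta$ already makes $t$ a unit and the coordinate ring need not mention $t^{-1}$, is the one point the paper leaves implicit, and you handle it cleanly.
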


  \bibliographystyle{abbrvnat}
  \bibliography{divpol_main}

\begin{thebibliography}{8}
\providecommand{\natexlab}[1]{#1}
\providecommand{\url}[1]{\texttt{#1}}
\expandafter\ifx\csname urlstyle\endcsname\relax
  \providecommand{\doi}[1]{doi: #1}\else
  \providecommand{\doi}{doi: \begingroup \urlstyle{rm}\Url}\fi

\bibitem[Baaziz(2010)]{baaziz}
H.~Baaziz.
\newblock Equations for the modular curve {$X_1(N)$} and models of elliptic
  curves with torsion points.
\newblock \emph{Math. Comp.}, 79:\penalty0 2371--2386, 2010.

\bibitem[Deligne and Rapoport(1973)]{delignerapoport}
P.~Deligne and M.~Rapoport.
\newblock Les sch\'emas de modules de courbes elliptiques.
\newblock \emph{Modular Functions of One Variable II, Lecture Notes in Math.},
  349:\penalty0 143--316, 1973.

\bibitem[Enge(1999)]{enge}
A.~Enge.
\newblock \emph{Elliptic curves and their applications to cryptography --- an
  introduction}.
\newblock Kluwer, 1999.

\bibitem[Grothendieck and Dieudonn{\'e}(1966)]{ega43}
A.~Grothendieck and J.~Dieudonn{\'e}.
\newblock {\'E}l{\'e}ments de {G}{\'e}om{\'e}trie {A}lg{\'e}brique {IV}:
  {\'E}tude locale des sch{\'e}mas et des morphismes de sch{\'e}mas
  (troisi{\`e}me partie).
\newblock \emph{Publ. Math. Inst. Hautes {\'E}tudes Sci.}, 28:\penalty0 5--255,
  1966.

\bibitem[Katz and Mazur(1985)]{katzmazur}
N.~M. Katz and B.~Mazur.
\newblock \emph{Arithmetic moduli of elliptic curves}, volume 108 of \emph{Ann.
  of Math. Studies}.
\newblock Princeton University Press, 1985.

\bibitem[Raynaud(1970)]{raynaud}
M.~Raynaud.
\newblock \emph{Faisceaux amples sur les sch\'emas en groupes et les espaces
  homog\`enes}, volume 119 of \emph{Lecture Notes in Math.}
\newblock Springer-Verlag, 1970.

\bibitem[Sutherland(2012)]{sutherland}
A.~V. Sutherland.
\newblock Constructing elliptic curves over finite fields with prescribed
  torsion.
\newblock \emph{Math. Comp.}, 81:\penalty0 1131--1147, 2012.

\bibitem[{The Stacks Project Authors}()]{stacksproject}
{The Stacks Project Authors}.
\newblock {\itshape Stacks Project}.
\newblock \url{http://stacks.math.columbia.edu}.

\end{thebibliography}
\end{document}